\documentclass{amsart}

\usepackage{graphicx} 
\usepackage{fullpage}

\usepackage{amsmath,amssymb,mathrsfs} 
\usepackage{amsthm}
\usepackage[pagebackref,hypertexnames=false]{hyperref}
\usepackage{fancybox,fancyhdr,graphics,epsfig}
\usepackage[usenames,dvipsnames]{color}
\usepackage{bbm}
\usepackage{subcaption}
\usepackage{stmaryrd}

\def\covnum{N^{\mathrm{cov}}}

\def\pilim{\Pi_k^*}
 \usepackage{amsmath,amsthm,bbm,amssymb}
\DeclareMathOperator{\birth}{birth}
\DeclareMathOperator{\death}{death}

\DeclareMathOperator{\dist}{dist}

\DeclareMathOperator{\supp}{supp}

\DeclareMathOperator{\Vol}{Vol}

\DeclareMathOperator{\B}{B}

\DeclareMathOperator{\logg}{\log\log}

\DeclareMathOperator{\PH}{PH}

\def\R{\mathbb{R}}

\def\P{\mathbb{P}}

\def\S{\mathbb{S}}

\def\cA{\mathcal{A}}

\def\cC{\mathcal{C}}
\def\cD{\mathcal{D}}
\def\cF{\mathcal{F}}

\def\cH{\mathcal{H}}

\def\cN{\mathcal{N}}

\def\cP{\mathcal{P}}

\def\cR{\mathcal{R}}

\def\cE{\mathcal{E}}
\def\cX{\mathcal{X}}

\newcommand{\E}{\mathbb{E}} 

\newcommand{\mean}[1] {\E\left\{{#1}\right\}}

\newcommand{\ind}{\boldsymbol{\mathbbm{1}}} 
\newcommand{\indf}[1]{\ind\set{#1}} 

\newcommand{\set}[1]{\left\{#1\right\}}

\newcommand{\param}[1]{\left(#1\right)}
\newcommand{\abs}[1] {\left| {#1}\right|}

\newcommand{\ceil}[1] {\left\lceil{#1}\right\rceil}
\newcommand{\prob}[1]{\mathbb{P}\left(#1\right)}

\newcommand{\eps}{\varepsilon}

\newcommand{\rmin}{r_{\min}}

\newcommand{\be}{\mathbf{e}}

\providecommand{\setthms}[1]{#1}
\setthms{
\newtheorem{lem}{Lemma}[section]
\newtheorem{thm}[lem]{Theorem}
\newtheorem{prop}[lem]{Proposition}
\newtheorem{cor}[lem]{Corollary}

\newtheorem{rem}[lem]{Remark}

\theoremstyle{definition}
\newtheorem{defn}[lem]{Definition}

}

\def\Hg{\mathrm{H}}

\newcommand{\ninf}{n\to\infty}
\newcommand{\pois}[1]{\mathrm{Poisson}\param{{#1}}}

\newcommand{\fmax}{f_{\max}}

\newcommand{\limninf}{\lim_{\ninf}}

\newcommand{\bs}{\backslash}

\def\CC{\check{C}}
\def\const {C^*}
\definecolor{mygreen}{rgb}{0, 0.68, 0.31}
\definecolor{myred}{rgb}{1.0, 0,0}

\numberwithin{equation}{section}

\def\bsplit#1\esplit{\begin{split} #1 \end{split} }
\def\splitb#1\splite{\begin{split} #1 \end{split} }
\def\beq#1\eeq{\begin{equation} #1 \end{equation}}
\def\eqb#1\eqe{\begin{equation} #1 \end{equation}}

\title{A Universal Law of Large Numbers \\for Extreme  Cycles in Random \v Cech Complexes}

\def\coef{c}
\def\fmaxD{f_{\max\!|_D}}
\def\fminD{f_{\min\!|_D}}

\DeclareMathOperator{\spt}{spt}

\DeclareMathOperator{\FillRad}{FillRad}
\DeclareMathOperator{\proj}{proj}
\DeclareMathOperator{\Cyl}{Cyl}
\DeclareMathOperator{\aff}{aff}
\DeclareMathOperator{\meshstar}{st}
\DeclareMathOperator{\Tan}{Tan}
\DeclareMathOperator{\diagram}{dgm}
\DeclareMathOperator{\PoissonProcess}{Poisson}

\newcommand{\coverconst}{\mu_k^{\mathrm{cov}}}
\newcommand{\flatnorm}[1]{\left\lVert#1\right\rVert_{\mathcal F}}
\newcommand{\geomreal}[1]{\left\llbracket #1 \right\rrbracket}
\newcommand{\card}[1]{\left|#1\right|}

\author{Omer Bobrowski\textsuperscript{1}}

\author{Primo\v{z} \v{S}kraba\textsuperscript{1,2}}
\address{\textsuperscript{1} School of Mathematical Sciences,
Queen Mary University of London,
London, United Kingdom}
\address{\textsuperscript{2} Faculty of Computer and Information Science,
University of Ljubljana,
Ljubljana, Slovenia}

\begin{document}

\begin{abstract}

We study the maximal multiplicative persistence of  $k$-cycles in random \v Cech complexes. Let $f:\R^d\to\R$ be a probability density function, let $\cP_n$ be a Poisson process with intensity $nf$, and let $\Pi_{k,n}$ denote the largest death-to-birth ratio among all non-essential $k$-cycles ($1\le k \le d-1$). 
For the uniform distribution in the unit hypercube, it was proved in \cite{bobrowskiMaximallyPersistentCycles2017} that $\Pi_{k,n} = \Theta\param{ \left(\frac{\log n}{\log\log n}\right)^{1/k}}$.
In this paper we sharpen and extend this result to a law of large numbers, for a  broad class of distributions. Most significantly, we show that the limiting constant depends \emph{only} on $d$ and $k$, and \emph{not} on the probability density $f$. Thus the extreme value of multiplicative persistence exhibits a \emph{universality} phenomenon. We show that the limiting constant is determined by the asymptotic covering density of the $k$-dimensional sphere. 
Our proof identifies the geometric mechanism underlying maximal cycles,  a persistent isoperimetric inequality, which gives  sharp bounds on the number of points needed to generate a highly persistent cycle. By combining covering-density estimates with isoperimetric inequalities, we show that this minimum is asymptotically attained by efficient coverings of a $k$-sphere. A key ingredient is a geometric measure theory argument that uses compactness to relate discrete covering counts to the volume of a limiting cycle.
\end{abstract}

\maketitle

\section{Introduction}

Over the past two decades, \emph{persistent homology} \cite{edelsbrunner_topological_2002,zomorodian_computing_2005}
 has emerged as a versatile framework in geometry and topology, with applications to areas such as group theory and symplectic dynamics \cite{alpertConfigurationSpacesDisks2024,ellisPersistentHomologyGroups2011,entovLegendrianPersistenceModules2022,polterovichTopologicalPersistenceGeometry2020}, as well as to topological data analysis (TDA) \cite{carlssonTopologicalDataAnalysis2021,chazalHighdimensionalTopologicalData2017,deyComputationalTopologyData2022,edelsbrunnerComputationalTopologyIntroduction2010}. Given a filtration of topological spaces, it records the \emph{birth} and \emph{death} times of homological features as the filtration parameter increases. 
Thus, it provides a multiscale invariant that interpolates between local geometry and global topology. In recent years, persistent homology has become an important tool in data and network analysis, motivating a substantial body of work on the probabilistic behavior of persistence diagrams generated by random geometric complexes (see \cite{chazal2018density,hiraoka2018limit,kahle2011random,krebs2025asymptotic} and the survey in \cite{bobrowskiTopologyRandomGeometric2018}). 
In this paper we study the extremal behavior of persistent homology in random \v{C}ech complexes. 
Previous work in stochastic topology focused mainly on spatial extremes \cite{AdlerBobrowskiWeinberger2014Crackle,Owada2018BettiExtreme,OwadaAdler2017PointProcesses,OwadaBobrowski2020PersistenceCrackle}. Our goal here is to study extreme values for the lifetime of persistence cycles. 

Let $f:\R^d\to\R$ be a probability density function, and let $\mathcal P_n\subset\mathbb R^d$ be a Poisson process with intensity $nf$. We consider the associated \v Cech filtration, denoted  $\set{\CC_r(\mathcal P_n)}_{r\geq 0}$. For every class $\gamma$ in the $k$-th persistent homology $\PH_k(\cP_n)$, we denote its birth and death times by $\birth(\gamma)$ and $\death(\gamma)$, and its multiplicative persistence measure as
\[
\pi(\gamma):=\frac{\death(\gamma)}{\birth(\gamma)}.
\]
Our focus is on the largest persistence value appearing in dimension $k$, i.e., 
\[
\Pi_{k,n}:=\max_{\gamma\in\PH_k(\cP_n)}\pi(\gamma).
\]
The quantity $\Pi_{k,n}$ measures the size of the most prominent topological feature generated by the random complex. A natural question is therefore: 
{\bf How large can the most persistent $k$-cycle in a random geometric complex be?}
This question was first studied in \cite{bobrowskiMaximallyPersistentCycles2017}, where it was shown that for a homogeneous Poisson process in the unit cube (i.e. $f\equiv 1$), with high probability, we have
\[
A\Delta_{k,n}
\leq
\Pi_{k,n}
\leq
B\Delta_{k,n},
\]
where
\[
\Delta_{k,n}=
\left(
\frac{\log n}{\log\log n}
\right)^{1/k},
\]
and $0<A<B<\infty$ are some constants (that may depend on $d$ and $k$).
That result established the order of growth of the largest persistent cycle, revealing that extremal persistent homology is governed by rare geometric configurations that appear already in the sparse regime. However, it left a few fundamental questions open:

\begin{enumerate}
\item Does $\Pi_{k,n}/\Delta_{k,n}$ converge to a deterministic limit (i.e., is there a law of large numbers)?
\item If so, what can we say about this limit, and the precise geometric configurations that generate it?
\item To what extent does this limiting behavior depend on the choice of $f$?
\end{enumerate}

The main goal of this paper is to answer these questions. Our main result shows that the largest $\death/\birth$ value (after normalization) converges in probability to an explicit constant depending only on the ambient dimension $d$ and the homological degree $k$, but not on $f$. In other words, $\Pi_{k,n}$ has a \emph{universal} limit.

A key observation is that the probabilistic limit is governed by a deterministic geometric problem, namely -- how many points are needed to generate a cycle with a prescribed persistence ratio? 
We establish an asymptotically sharp lower bound and show that efficient coverings of a $k$-sphere attain it. In this construction, the covering radius controls the birth time, while the sphere radius controls the death time. 
The lower bound combines covering estimates with isoperimetric inequalities, using a compactness argument from geometric measure theory to connect discrete covering counts with the mass of a limiting cycle. Consequently, we can show that the asymptotic behavior of $\Pi_{k,n}$ is governed by the covering number of the unit sphere $\S^k$.

Closest in spirit to our results is \cite{ChenavierHirsch2022Extremal}, where the authors prove a Poisson approximation for extremal $(\death-\birth)$ values. Their maximal lifetime results are for a homogeneous Poisson process and $k=d-1$, excluding critical percolation regimes. These restrictions (and in particular, lack of universality) arise since additive lifetimes are not scale-invariant by nature. In addition, \cite{ababneh2024maximal} studies the maximal death-to-birth ratio for random clique complexes, which have no underlying geometry. They identify its polynomial growth exponent, using fixed-size configurations (the boundary of a cross-polytope) to establish the lower bound.

The results presented here are also closely related to recent work on universality in random persistent homology. In \cite{bobrowskiUniversalNulldistributionTopological2023,bobrowskiUniversalityRandomPersistent2026,lim2026universal}, it was shown that the empirical distribution of the $\death/\birth$ persistence values converges to a universal limit (independent of the underlying sampling distribution). 
In this paper, we show that universality extends all the way to the extreme tail of the persistence diagram.

We note that our statements about extremal cycles are far from being a consequence of the results proved in \cite{bobrowskiUniversalityRandomPersistent2026}. In general, convergence of empirical measures does not imply convergence of extrema. Moreover, the persistence values appearing in a random persistence diagram may be highly dependent, which can strongly influence their extreme-value behavior. Therefore, the framework developed in \cite{bobrowskiUniversalityRandomPersistent2026} does not apply in the present setting. In particular, a key assumption there is additivity, that the $\Pi_{k,n}$ values clearly do not satisfy.

While this paper is theoretical in nature, the results we present also provide insight in the field of topological data analysis (TDA). 
Computing persistent homology on real-world datasets, typically produces a large number of cycles, where only a few of these cycles represent meaningful geometric or topological structures. Determining which features should be regarded as significant remains a fundamental challenge in TDA \cite{blumberg_robust_2013,chazal_geometric_2011,fasy_confidence_2014,reani_cycle_2023,vejdemo_johansson_multiple_2022}. Revealing the maximal persistence of cycles generated by randomness alone (i.e., ``topological noise"), our results can be used to develop new powerful tests to detect the meaningful structure in the data.

\section{Preliminaries}

\subsection{Persistent homology}\label{sec:homology}

Throughout the paper, all homology groups are taken with $\mathbb{Z}_2$ coefficients. While most of the steps apply to any field $\mathbb{F}$, optimality of the cycles can currently be shown only for $\mathbb{Z}_2$.
Given a filtration $\mathcal{F} = \{X_t\}_{t\in\R}$ of topological spaces (i.e.,  $X_{t_1}\subset X_{t_2}$ for $t_1\le t_2$), the  persistent homology of $\cF$, denoted $\PH_k(\cF)$,  is a module that consists of the vector spaces  
$\{\Hg_k(X_t)\}_{t\in \R}$ and the linear maps $\Hg_k(X_{t_1}) \rightarrow \Hg_k(X_{t_2})$ (for $t_1\le t_2$), induced by inclusion.
When $\Hg_k(X_t)$ is finite dimensional for every $t$ (as is the case in this paper), the module $\PH_k(\cF)$ can be decomposed into a sum of \emph{intervals}. These are rank-one sub-modules of the form $\mathbb{I}[b,d)$, that are non-trivial  inside the interval $[b,d)$ only (note that $d$ might be infinity). In other words, we have (see \cite{crawley-boeveyDecompositionPointwiseFinitedimensional2015} Theorem 1.1),
\[
\PH_k(\mathcal{F}) \cong \bigoplus\limits_{i} \mathbb{I}[b_i,d_i).
\]
A common way to summarize the information encoded in persistent homology is to look at the collection of $(b_i,d_i)$ pairs in the decomposition above. This is called the \emph{persistence diagram} of $\PH_k(\cF)$, and is denoted by $\diagram_k(\cF)$. See Figure \ref{fig:persistence} for an example.

\begin{figure}
    \centering
    \includegraphics[width=0.85\linewidth]{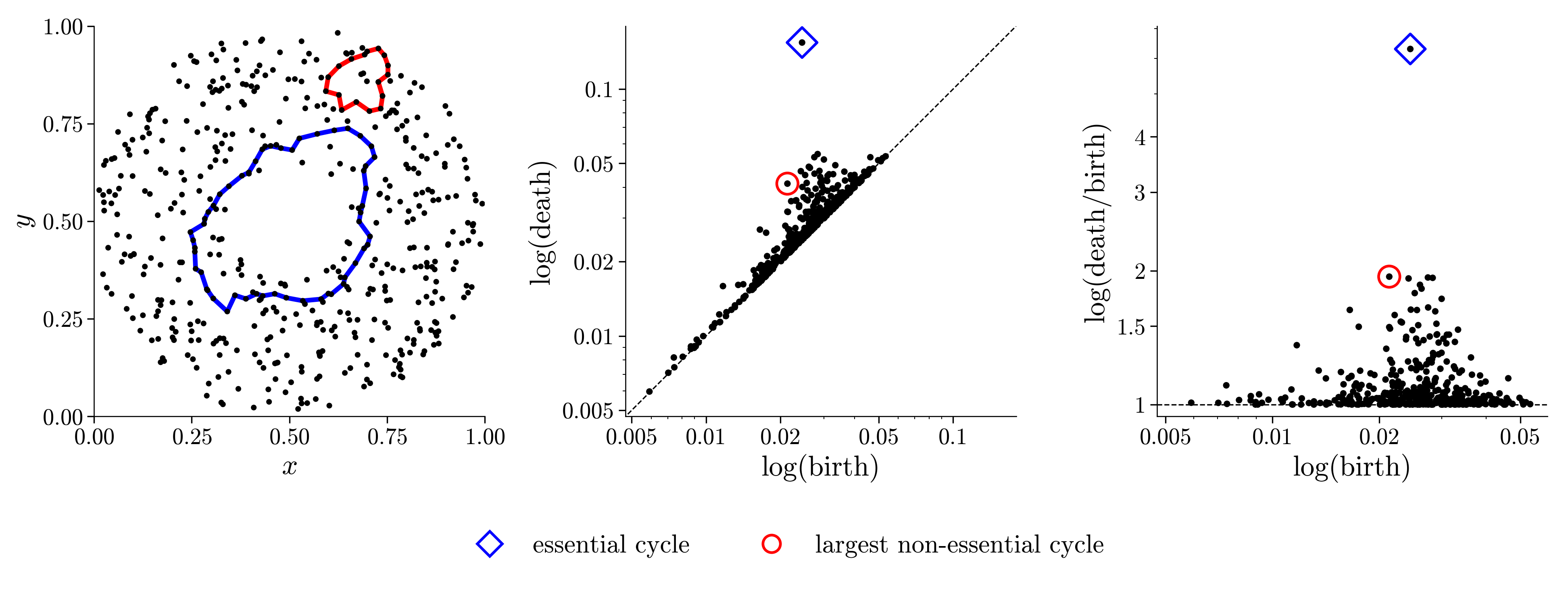}
    \caption{$\Hg_1$ persistence diagram. {\bf{(left)}} iid points sampled uniformly in an annulus. {\bf(middle)}  the persistence diagram for $\Hg_1$. Note that there is one essential cycle corresponding to the hole in the annulus, shown in blue {\bf(right)} transformed persistence diagram where the $y$-axis is $\death/\birth$ rather than $\death$. The  cycle marked in red is the non-essential cycle with the largest $\death/\birth$ value. This cycle is the subject of this paper. We note that all axes are in logarithmic scale to improve visualization.}
    \label{fig:persistence}
\end{figure}

\paragraph{\bf Essential vs.~non-essential cycles}
In this paper we study the persistent homology generated by points sampled from a subspace of $\R^d$. In this case, we  differentiate between two types of cycles. 
By \emph{essential} cycles we refer to persistent cycles that are generated by the geometry of the support, rather than the randomness of the points, and in particular, their death times are asymptotically a constant.
Note that in our setting, since all cycles are compactly supported in $\R^d$, the death time is always finite.
All other cycles are generated by the random placement of the points, and their scale decreases as we increase the number of points. We call these cycles \emph{non-essential}. The analysis in this paper applies to the non-essential cycles only, see Figure \ref{fig:persistence}. For more details and formal definition, see Section 2 in the Supplementary Information of \cite{bobrowskiUniversalNulldistributionTopological2023}.

\subsection{Filtrations of geometric complexes}

In this paper we focus on the persistent homology of filtrations generated by random point processes in $\R^d$. To impose structure on finite point-sets, we construct simplicial complexes using the \v Cech filtration as follows. 

Let $\cP\subset\R^d$ be a finite set. For $r\geq 0$, the \emph{\v{C}ech complex}, denoted $\CC_r(\cP)$, is an abstract simplicial complex with vertex set $\cP$, where a finite set $\sigma\subset\cP$ is a simplex if and only if $
\bigcap_{x\in\sigma} B_r(x)\neq\emptyset$. By the Nerve Lemma \cite{borsukImbeddingSystemsCompacta1948}, we have that
$\CC_r(\cP)\simeq B_r(\cP) := \bigcup_{p\in\cP}B_r(p)$, 
and therefore $\Hg_*(\CC_r(\cP))\cong \Hg_*(B_r(\cP))$.
We will use this equivalence throughout the paper.

A similar construction is the  \emph{Vietoris-Rips} complex, denoted $\cR_r(\cP)$, where the simplices are all subsets of points whose diameter is smaller than $r$. While we conjecture that similar results apply to the Vietoris-Rips complex, the proof for the  upper bound, and in particular the geometric statements in Section \ref{sec:geom} do not apply. Therefore, a different approach is required which will remain as future work.

\subsection{Multiplicative persistence}
A key motivation for using persistent homology in the context of data analysis  is that  $k$-cycles with a long lifetime 
are assumed to represent important topological features hidden in the data. In this paper, we consider the lifetime of cycles in a multiplicative way.
Let $\cF$ be a filtration, and let $\gamma\in \PH_k(\cF)$ for some $k>0$. We define the `$\pi$-value' of $\gamma$ as
\[
\pi(\gamma) := \frac{\death(\gamma)}{\birth(\gamma)}.
\]
This ratio provides a useful measure for the ``size'' of cycles in geometric filtrations, mainly as it is scale-invariant and robust to outliers. For more intuition and discussion see \cite{bobrowskiMaximallyPersistentCycles2017,bobrowskiUniversalNulldistributionTopological2023}.
Scale invariance also connects multiplicative persistence to an extremal geometric problem. It allows us to normalize the death radius of a  cycle to be $1$, so that a  birth time of at most $1/\pi_0$ gives a persistence value of at least $\pi_0$. In Section \ref{sec:geom}, we use this formulation to determine the minimum number of points needed to support such a persistent cycle.

\subsection{Poisson point processes}

Let $f:\mathbb R^d\to[0,\infty)$ be a probability density function. We write
\[
\mathcal P_n\sim \PoissonProcess(n;f)
\]
for the spatial Poisson  process on $\mathbb R^d$ with intensity
$nf(\cdot)$. By definition, $\cP_n$ satisfies the following.
\begin{itemize}
    \item For any Borel set $A\subset \R^d$, we have $\card{\cP_n \cap A} \sim \pois{nF(A)}$, where $F(A) = \int_A f(x)dx$.
    \item For any disjoint Borel sets $A,B\subset \R^d$, the random variables $\card{\cP_n\cap A}$ and $\card{\cP_n \cap B}$ are independent.
\end{itemize}
A simple way to generate a sample from $\cP_n$, is by taking a sequence of iid random variables $\set{X_1,X_2,\ldots}$ with density $f$, and then take $\cP_n = \set{X_1,\ldots, X_{N}}$ where $N\sim \pois{n}$ is a Poisson random variable.

\section{Main Results}\label{sec:main}

Our goal is to prove the limiting result for a wide class of densities $f$. We will focus on two main types of distributions.
Let $\cD_1$ be the collection of all  density functions $f:\R^d\to\R$ that have a compact support, such that $\fmax := \sup_{\R^d} f < \infty$, and  such that for all $x\in \supp(f)$ we have
\eqb\label{eqn:lb_vol}
    F(B_r(x)\cap \supp(f)) \ge C r^d,
\eqe
for some constant $C>0$, and for sufficiently small $r$.
Next, let $\cD_2$ be the collection of all density functions $f:\R^d\to\R$ of the form 
\[
f(x)=C_f\exp\left(-\coef\| x\|^q\right),
\] 
where $q,\coef>0$, $x\in\mathbb{R}^d$, and $C_f>0$ is a normalizing constant.
Our main results will be proven for the class $\cD = \cD_1\cup \cD_2$ that covers a wide range of distributions. 
Throughout, we use $F$ to denote the probability measure in $\R^d$, i.e., for any Borel set $A\subset\R^d$,
\[
    F(A) := \int_A f(x)dx.
\]

Let $\cP_n\sim\PoissonProcess(n;f)$ be a Poisson process in $\R^d$ with density $f\in \cD$. 
Let $\PH_{k,n}=\PH_k(\cP_n)$ be the  persistent homology generated by the \v Cech filtration $\set{\CC_r(\cP_n)}_{r\ge 0}$. Note that for class $\cD_1$, the persistence module might contain \emph{essential} cycles (see discussion in Section \ref{sec:homology}). We implicitly exclude these cycles from $\PH_{k,n}$ and assume it only consists of \emph{non-essential} cycles in the filtration. In particular, this implies that for large enough $n$, all cycles in $\PH_{k,n}$ have a death-time smaller or equal to the coverage radius of the support, a fact that we will use in the proofs.

For every persistent cycle $\gamma\in\PH_{k,n}$ we define $\pi(\gamma) = \death(\gamma)/\birth(\gamma)$, and the maximal persistence,
\[
    \Pi_{k,n} := \max_{\gamma\in\PH_{k,n}} \pi(\gamma).
\] 
The scaling we will use is the same as in \cite{bobrowskiMaximallyPersistentCycles2017},
\[
    \Delta_{k,n} = \param{\frac{\log n}{\logg n}}^{1/k}.
\]
The limiting constant is
\begin{equation}\label{eqn:lim_cech}
\pilim = (\coverconst s_k (d/k-1))^{-1/k},
\end{equation}
where $\coverconst$ is the limiting covering number (see Theorem \ref{thm:covering}), and $s_k$ is the volume of a unit sphere in $\R^{k+1}$.
Their product,   $\mu_k^{\mathrm{cov}}s_k$, is directly related to the minimum number of points needed to generate a cycle with large multiplicative persistence. More precisely, the minimum number of points needed for a cycle of persistence at least $\pi_0$ is asymptotically equal to $\mu_k^{\mathrm{cov}}s_k\pi_0^k$ as $\pi_0\to\infty$. We establish the lower bound in Proposition \ref{prop:optimal}, while efficient spherical coverings provide the matching construction.

Our main result is the following weak law of large numbers for the maximal persistence value $\Pi_{k,n}$.

\begin{thm}\label{thm:main}
Let $f\in \cD$, and let $\cP_n\sim \PoissonProcess(n;f)$. Let $1\le k \le d-1$, and let $\Pi_{k,n}$ be the maximal persistence value. Then,
\[
    \frac{\Pi_{k,n}}{\Delta_{k,n}} \xrightarrow{\P} \pilim,
\]
where $\pilim$ is defined in \eqref{eqn:lim_cech}, where $\xrightarrow{\P}$ refers to convergence in probability.
\end{thm}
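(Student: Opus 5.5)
The proof splits into a lower bound and an upper bound, both reduced to the deterministic count $M(\pi_0)$: the minimum number of points in $\R^d$ whose \v Cech filtration contains a $k$-cycle with $\death/\birth\ge\pi_0$. By scale invariance we normalize $\death = 1$ and $\birth\le 1/\pi_0$. We will use that $M(\pi_0)\sim \coverconst s_k\pi_0^k$ as $\pi_0\to\infty$. The lower bound on $M$ is Proposition~\ref{prop:optimal}. The matching upper bound on $M$ comes from an efficient covering of the unit $k$-sphere $S$, embedded in a $(k+1)$-plane of $\R^d$, by balls of radius $1/\pi_0$. By Theorem~\ref{thm:covering} such a covering uses $(1+o(1))\coverconst s_k\pi_0^k$ centers. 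The union of the balls is homotopy equivalent to $S$ and contains a nontrivial $k$-cycle born at about $1/\pi_0$. The cycle dies at about $1$, the radius of the smallest ball containing it.

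For the \textbf{lower bound} $\Pi_{k,n}\ge(\pilim-\eps)\Delta_{k,n}$ w.h.p., set $\pi_0=(\pilim-\eps)\Delta_{k,n}$ and $m=(1+\delta)\coverconst s_k\pi_0^k$. Pick a region where $f$ is bounded below. Tile it with $L\approx n^{1-\eta}$ disjoint cubes whose side is a multiple of the scale $\rho_n$ chosen so that $n\rho_n^d\approx n^{-a}$. In each cube we look for a \emph{local event}. First, each of $m$ small target balls of radius $\theta\rho_n/\pi_0$, centered at the covering points of a sphere of radius $\rho_n$, must contain a point. Second, an annular neighborhood of the sphere, together with a neighborhood of the enclosed disk at scales up to $\rho_n$, must otherwise be empty. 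This emptiness ensures no extra points kill the cycle early or change its birth.

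The probability of the first requirement is about $(c\, n\rho_n^d\pi_0^{-d})^{m}=\exp(-m(d\log\pi_0 + a\log n+O(1)))$. Since $\log\pi_0\sim\frac1k\logg n$, the choice $a\to 0$, together with the right trade-off of the exponent, gives the governing term $\exp(-(d/k)\,m\logg n)$. Here I expect the actual bookkeeping to produce a factor $(d/k-1)$. Heuristically, the events for $\pi_0$ are counted over the roughly $n\pi_0^{k}\cdots$ available locations, which is the origin of the ``$-1$''. I would calibrate $\rho_n$ so that the total expected count is $L\,p_n=n^{\Omega(1)}\to\infty$ exactly when $\pi_0^k\coverconst s_k(d/k-1)<\log n/\logg n$, which is where \eqref{eqn:lim_cech} comes from. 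The local events in disjoint cubes are independent, so by Poisson independence at least one occurs with probability tending to $1$. Under the emptiness condition, the nerve gives the cycle persistence at least $\pi_0(1-o(1))$.

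For the \textbf{upper bound} $\Pi_{k,n}\le(\pilim+\eps)\Delta_{k,n}$, suppose a cycle with ratio at least $\pi_0=(\pilim+\eps)\Delta_{k,n}$ exists, with death radius $R$. Then the points within distance $O(R)$ of some point of $\cP_n$ must contain at least $M(\pi_0)\ge(1-\delta)\coverconst s_k\pi_0^k$ points. Moreover, a $1/\pi_0$-fraction discretization forces these points to lie in specified cells of size $R/\pi_0$, and only a sublinear in $\pi_0^k$ number of points may be non-essential. We take a union bound over the anchor point (about $n$ choices) and over a dyadic grid of $R$. The number of cell patterns is at most $\exp(O(\pi_0^k\log\pi_0))$, and the probability of each is at most $(C n R^d\pi_0^{-d})^{M}$. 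For $R\lesssim n^{-1/d}$ polylog, this gives $n\cdot\exp(-(1-\delta)(d/k)\coverconst s_k\pi_0^k\logg n\,(1+o(1)))$, which tends to $0$. For large $R$ we instead use that a large empty ball (needed for late death) is unlikely; for $\cD_2$ this is handled by the tail, restricting to the bulk region where the denominators behave. Universality follows because $f$ enters only through $O(1)$ factors inside logarithms.

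The \textbf{main obstacle} is the upper bound's combinatorial step: a persistent cycle may use points not arranged near a sphere and spread at many scales, and the count must match $\coverconst s_k\pi_0^k$ to leading order. This is exactly where Proposition~\ref{prop:optimal} (the persistent isoperimetric inequality) is invoked. The delicate part is making the union bound over configurations lose only $e^{o(\pi_0^k\logg n)}$. I would attempt this by coarse-graining positions to a grid of mesh $\delta R/\pi_0$ and using Proposition~\ref{prop:optimal} in a robust form, stable under $\delta$-perturbations of the points.
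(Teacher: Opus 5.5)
Both halves of your argument have a genuine gap. In both, the missing piece is the bookkeeping that produces the factor $d/k-1$ in $\pilim$.

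\textbf{Lower bound.} The calibration $n\rho_n^d\approx n^{-a}$ cannot reach $\pilim$. With $L\approx\rho_n^{-d}$ cubes and $p_n\approx(c\,n\rho_n^d\pi_0^{-d})^m e^{-Cn\rho_n^d}$, you get
\[
\log(Lp_n)\approx \log n+(m-1)\log(n\rho_n^d)-m\,d\log\pi_0-C\,n\rho_n^d .
\]
For fixed $a>0$ the middle term is $\asymp-(\log n)^2/\logg n$, so nothing is found. Letting $a\to0$ gives at best $\log n-\frac dk\,m\logg n\,(1+o(1))$, which yields only the strictly smaller constant $(\coverconst s_k\,d/k)^{-1/k}$. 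The ``$-1$'' does not come from counting locations. It comes from placing the outer (death) scale at the coverage scale, $n\rho_n^d=\alpha\log n$ with $\alpha$ small. Then $(n\rho_n^d)^m=e^{m\logg n(1+o(1))}$, the emptiness cost is only $n^{-C\alpha}$, and there are still $\Theta(n/\log n)$ disjoint cubes. Hence $\log(Lp_n)=\bigl(1-(1+\delta)(1-\eps)^k-C\alpha\bigr)\log n+o(\log n)\to\infty$. This is exactly the paper's choice: $nR^d=\alpha\log n$ with a grid of side $6R$.

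\textbf{Upper bound.} Your final exponent drops the same factor, and the union over cell patterns does not close. At the relevant death scale $nR^d\asymp\log n$, the term $(nR^d)^M=e^{M\logg n}$ is not negligible. So a pattern costs $e^{-(d/k-1)M\logg n(1+o(1))}$, not $e^{-(d/k)M\logg n}$. The number of ways to place $M$ occupied cells of side $R/\pi_0$ in a ball of radius $R$ is $\binom{C\pi_0^d}{M}=e^{(d/k-1)M\logg n(1+o(1))}$. The union bound is therefore at least of order $(c\,nR^d/M)^M\to\infty$. A robust form of Proposition~\ref{prop:optimal} cannot fix this: it only bounds the number of points and says nothing about where they sit.

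The missing idea is connectivity. At the birth radius $r$, the points supporting the cycle lie in one connected component of $B_r(\cP_n)$. Connected sets can be counted through spanning trees (Cayley's $M^{M-2}$ labelled trees), whose entropy is only $e^{O(M)}=n^{o(1)}$. By Mecke's formula, the expected number of $M$-subsets with connected $r$-ball union is at most $\frac{en}{M}(C\,nr^d)^{M-1}$.

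The paper combines this with Proposition~\ref{prop:optimal} and monotonicity in $r$. With high probability, every cycle with $\pi>\pi_U$ then has birth radius $r$ with $nr^d\gtrsim(\log n)^{-1/\beta}$, where $\beta=(1+\eps')\frac{k}{d-k}$. Its death radius $R$ satisfies $nR^d\le C\log n$ by coverage. Hence $\pi\lesssim(\log n)^{(1+1/\beta)/d}=o(\pi_U)$, since $1+1/\beta<d/k$. No union over scales or patterns is needed.

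For $\cD_2$, restricting to a bulk region is not enough, because points (and possibly cycles) occur where $f$ is as small as $n^{-1+o(1)}$. The paper instead uses density-adaptive radii $nr_n^d(x)f(x)=a(\log n)^{-1/\beta}$ and $nR_n^d(x)f(x)=A\log n$. It proves an adaptive coverage lemma and shows that far-tail components have bounded size.
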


\begin{rem}
Our proof of Theorem \ref{thm:main} below splits into two parts. The lower bound proof applies to any density function, not necessarily in $\cD$. The proof for the upper bound is where we use the assumptions on the class $\cD$. While we conjecture that Theorem \ref{thm:main} can be generalized far beyond $\cD$, it seems that the proofs will have to involve intricate  technical details on a case-by-case basis. We chose to focus on the two classes $\cD_1$ and $\cD_2$, where more general methods can be applied, and such that we cover distributions with both a compact and non-compact supports.
\end{rem}

\section{Covering Numbers and Optimal Cycles}\label{sec:geom}
Several parts of the proof of  Theorem \ref{thm:main} will require the minimal number of points to generate sufficiently persistent classes. 
In this section, we present a deterministic statement, arguing that the intuitive choice for a persistent cycle is indeed the one that minimizes the number of points. By ``intuitive choice" we refer to the spreading of points equally around a $k$-sphere, where the gaps between points control the birth time, and the sphere radius determines the death time (see Figure \ref{fig:eff}). 

\begin{defn}
For $A\subset \R^d$, let $\covnum_r(A)$ denote the covering number of $A$ with balls of radius $r$, i.e., 
\[
\covnum_r(A) := \min\left\{\card{\cX} : \cX\subset A\subseteq B_r(\cX) \right\},
\]
where $B_r(\cX)$ is the Euclidean ball cover of $\cX$ at radius $r$.
\end{defn}

A key ingredient we will use is the limit for the number of balls required to cover a set.
\begin{thm}[{\cite[Theorem~4]{AndersonEtAl2022}}]
\label{thm:covering}
Let $A\subset\R^d$ be compact and countably $k$-rectifiable,
with
$
0<\mathcal H^k(A)=\mathcal M^k(A)<\infty,
$
where $\cH^k$ is the Hausdorff measure, and $\mathcal M^k$ denotes the normalized
$k$-dimensional Minkowski content. Then
\[
\lim_{r\to 0 } r^k\covnum_r(A)
=
\coverconst\,\mathcal H^k(A),
\]
 where  $\coverconst>0$ depends only on $k$.
\end{thm}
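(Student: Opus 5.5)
The plan is to reduce the statement to the flat case $A\subset\R^k$ and then pass to rectifiable sets by a bi-Lipschitz decomposition. The overlap between the pieces is controlled by the hypothesis $\mathcal H^k(A)=\mathcal M^k(A)$.

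\textbf{Step 1 (the constant).} Let $Q=[0,1]^k\subset\R^k$. I would show that $\lim_{r\to0}r^k\covnum_r(Q)$ exists; this limit defines $\coverconst$.
\begin{itemize}
\item The argument is a Fekete-type subadditivity one. A cube of side $m$ is a union of $m^k$ unit cubes, and rescaling gives $\covnum_{r/m}(Q)\le m^k\covnum_r(Q)$.
\item Conversely, a near-optimal cover at a small radius $r'$ can be restricted to subcubes. The only loss comes from balls meeting subcube boundaries, which costs $O(m^{k-1}/r')$.
\item Together these show that $\liminf$ and $\limsup$ of $r^k\covnum_r(Q)$ agree.
\end{itemize}
The restriction "centers in $A$" changes nothing here. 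Balls centered at points of the affine $k$-plane containing $Q$ meet that plane in $k$-balls of the same radius. Any ball meeting $Q$ can be replaced by one centered in a slightly enlarged cube, again at a boundary cost $O(r^{1-k})=o(r^{-k})$.

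The same tiling argument then extends the limit $r^k\covnum_r(E)\to\coverconst\,\mathcal L^k(E)$ to every Jordan-measurable compact $E\subset\R^k$. For the upper bound, cover $E$ by the dyadic cubes meeting it. For the lower bound, use the cubes inside $E$; the balls of a cover of $E$ also cover these cubes, again up to a boundary cost.

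\textbf{Step 2 (upper bound for rectifiable $A$).} Fix $\eps>0$. By rectifiability I would write
\[
A=\Big(\bigcup_{i=1}^N \phi_i(E_i)\Big)\cup R .
\]
Here the sets $E_i\subset\R^k$ are compact and Jordan-measurable, and each $\phi_i$ is $(1+\eps)$-bi-Lipschitz onto its image. The images $\phi_i(E_i)$ are pairwise disjoint compact sets, and the remainder satisfies $\mathcal H^k(R)<\eps$.
\begin{itemize}
\item For each good piece, push forward a near-optimal cover of $E_i$ at radius $r/(1+\eps)$. The image centers lie in $A$, and the image balls cover $\phi_i(E_i)$. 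This gives $\covnum_r(\phi_i(E_i))\le(1+\eps)^k\coverconst\,\mathcal L^k(E_i)r^{-k}(1+o(1))$.
\item For $R$, bound the covering number by the $r/2$-packing number of $R$. Packing centers lie in $R\subset A$, and a volume count gives $\covnum_r(R)\lesssim \Vol(B_{r/2}(R))\,r^{-d}$.
\end{itemize}
The key point is to show that $\limsup_r \Vol(B_r(R))/r^{d-k}=O(\eps)$. The good part is a finite disjoint union of bi-Lipschitz images, so its Minkowski content equals its Hausdorff measure, which is at least $\mathcal H^k(A)-\eps$. The hypothesis $\mathcal M^k(A)=\mathcal H^k(A)$ says the whole neighborhood $B_r(A)$ has volume $\mathcal H^k(A)\,\omega_{d-k}r^{d-k}(1+o(1))$.
\begin{itemize}
\item I would first shrink $R$ to a slightly smaller set $R'$ at positive distance from the good part, putting the leftover sliver back into the good pieces.
\item For small $r$, $B_r(R')$ is then disjoint from the $r$-neighborhood of the good part. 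The full volume of $B_r(A)$ therefore splits into the two neighborhoods, leaving at most $O(\eps)\,r^{d-k}$ for $B_r(R')$.
\end{itemize}
Letting $\eps\to0$ gives $\limsup_{r\to0} r^k\covnum_r(A)\le\coverconst\,\mathcal H^k(A)$.

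\textbf{Step 3 (lower bound).} Take an optimal $r$-cover of $A$ and keep only the balls meeting $\phi_i(E_i)$. Because the good pieces are disjoint compact sets, for small $r$ each ball meets at most one piece. Pulling back by $\phi_i^{-1}$, and using that $\phi_i$ is $C^1$-close to an affine isometry at scale $r$, each such ball meets the piece in a set contained in a $k$-ball of radius $(1+\eps)r$ in $E_i$. Its center can be moved into a slight enlargement of $E_i$. Step 1 then gives
\[
\covnum_r(A)\ge\sum_i (1+\eps)^{-k}\coverconst\,\mathcal L^k(E_i)\,r^{-k}(1-o(1)),
\]
and letting $\eps\to0$ gives the matching $\liminf$.

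\textbf{Main obstacle.} I expect the main difficulty to be the remainder estimate in Step 2: turning smallness of $\mathcal H^k(R)$ into smallness of its Minkowski-type covering count. This is exactly where $\mathcal H^k=\mathcal M^k$ is needed. The delicate part is the separation of $R$ from the good pieces, which I would obtain from the compactness of the $\phi_i(E_i)$ and by shrinking $R$ as described.
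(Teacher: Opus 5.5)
\textbf{The main gap: Step~2, at the point you flagged.} The paper does not prove this statement; it quotes it from \cite{AndersonEtAl2022} and uses it as a black box. Your argument therefore has to stand alone, and it does not close in Step~2.

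Your claim that $\limsup_{r\to0}\Vol(B_r(R))/r^{d-k}=O(\eps)$ is false in general. Shrinking $R$ to $R'$ at a fixed distance does not rescue it. Write $G=\bigcup_i\phi_i(E_i)$. The sliver $R\setminus R'$ lies in $A\setminus G$, so it cannot be ``put back into the good pieces'', and covering it separately can cost as much as covering $G$.

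A concrete example: take $d=2$, $k=1$, $G=[0,1]\times\{0\}$, and let $R$ consist of $\lceil 2^n/n\rceil$ equally spaced points on $[0,1]\times\{2^{-n}\}$ for each $n\ge1$.
\begin{itemize}
\item $A=G\cup R$ is compact and rectifiable, and $\mathcal H^1(R)=0$.
\item A direct count gives $\Vol(B_r(A))=2r+o(r)$, so $\mathcal M^1(A)=\mathcal H^1(A)=1$.
\item At $r=2^{-m}$, every row with $n\ge m+\log_2 m+3$ lies below height $r$ with spacing less than $r/4$. Hence $\Vol(B_r(R))\gtrsim r$ and $\covnum_r(R\setminus R')\gtrsim 1/r$ for any fixed separation distance.
\end{itemize}
So any bound of the form $\covnum_r(G)+\covnum_r(\text{remainder})$ overshoots by a constant factor. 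The theorem itself still holds here, because the near-$G$ points are absorbed by a slightly denser cover of $G$.

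The underlying reason is that $\mathcal M^k(A)=\mathcal H^k(A)$ only controls the part of $B_r(R)$ lying outside $B_r(G)$. The separation must therefore be done at scale $r$, not at a fixed distance.

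\textbf{How to repair Step~2.}
\begin{enumerate}
\item Cover $G$ near-optimally at radius $(1-\eta)r$ with centers in $G$. Enlarged to radius $r$, these balls already cover $A\cap B_{\eta r}(G)$.
\item Let $P$ be a maximal $r$-separated subset of the uncovered points of $A$. Each $p\in P$ is at distance at least $\eta r$ from $G$.
\item The balls $B_{\eta r/2}(p)$, $p\in P$, are disjoint and lie in $B_{\eta r/2}(A)\setminus B_{\eta r/2}(G)$.
\item This set has volume at most $\omega_{d-k}(\eta r/2)^{d-k}(\eps+o(1))$. This uses $\mathcal M^k(A)=\mathcal H^k(A)$ together with $\liminf_{s\to0}\Vol(B_s(G))/(\omega_{d-k}s^{d-k})\ge\mathcal H^k(G)\ge\mathcal H^k(A)-\eps$.
\item Hence $r^k\card{P}\le C_d\,\eta^{-k}(\eps+o(1))$. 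Let $\eps\to0$ first, then $\eta\to0$.
\end{enumerate}

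\textbf{A second, independent problem: Jordan-measurable parameter domains need not exist.} Take $A=C\times\{0\}\subset\R^2$ with $C$ a fat Cantor set. It satisfies all the hypotheses, with $\mathcal M^1(A)=\mathcal H^1(A)=\mathcal L^1(C)$. But a Jordan set of positive measure contains an interval, and $A$ is totally disconnected, so $A$ contains no bi-Lipschitz image of such a set. Step~1 therefore has to be proved for arbitrary compact $E\subset\R^k$.
\begin{itemize}
\item For the lower bound, replace ``cubes inside $E$'' by a Lebesgue-density-point argument with an almost-cover estimate. The estimate \eqref{eq:4.7} in the proof of Lemma~\ref{lem:varying-cover} is exactly such a tool.
\item For the upper bound, keep the centers in $E$ by moving the centers of a near-optimal unconstrained cover by at most $\eta r$. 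The few centers $c$ with $B_{\eta r}(c)\cap E=\emptyset$ are then controlled by $\mathcal L^k(B_r(E)\setminus E)\to0$.
\end{itemize}

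\textbf{A smaller point in Step~3.} A $(1+\eps)$-bi-Lipschitz map need not be $C^1$, so you cannot invoke $C^1$-closeness to an affine isometry. Diameter bounds alone only place $\phi_i^{-1}(B_r(c))$ in a $k$-ball of Jung radius $(1+\eps)r\sqrt{2k/(k+1)}$, which exceeds $r$ for $k\ge2$. You need either pieces chosen inside $C^1$ submanifolds, which is possible for rectifiable sets, or an isometric-approximation theorem.
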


The following proposition  will serve as an important part of the proof of Theorem \ref{thm:main}. 
\begin{prop}
\label{prop:optimal}
For  $1\le k < d$, let $N^{(k)}_{\pi_0}$  denote the smallest number of points in $\mathbb R^d$ needed
to generate a persistent  $\mathbb{Z}_2$ $k$-cycle $\gamma$  in the \v Cech filtration with $\pi(\gamma)\ge \pi_0$.  Then, for any $\delta>0$, for all sufficiently large  $\pi_0$,
\[  
N_{\pi_0}^{(k)}\geq(1-\delta)\pi_0^k \coverconst s_k,
\]
where $s_k$ is the volume of $\S^k$, and $\coverconst$ is the limiting covering constant from Theorem \ref{thm:covering}.
\end{prop}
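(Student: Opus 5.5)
We argue by contradiction combined with a compactness argument. Suppose that for some $\delta>0$ there are $\pi_j\to\infty$ and finite sets $\cX_j\subset\R^d$ with $|\cX_j|<(1-\delta)\pi_j^k\coverconst s_k$, each generating a $k$-class $\gamma_j$ with $\pi(\gamma_j)\ge\pi_j$. By scale invariance we normalize so that $\death(\gamma_j)=1$, hence $r_j:=\birth(\gamma_j)\le 1/\pi_j\to 0$. By translating, and discarding points far from the relevant component (or restricting to the connected component of $B_1(\cX_j)$ carrying $\gamma_j$), we may assume all $\cX_j$ lie in a fixed bounded region. The reduction to a bounded region needs a short argument: a class dying at radius $1$ and born at radius $r_j$ can be supported on one component of $B_{r_j}(\cX_j)$, and that component can be assumed to have bounded diameter after excising irrelevant pieces.

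The next step represents $\gamma_j$ geometrically. Take a simplicial $\mathbb{Z}_2$ $k$-cycle $z_j$ in $\CC_{r_j}(\cX_j)$ representing $\gamma_j$ and realize it geometrically via linear simplices in $\R^d$, giving a $k$-dimensional integral current (mod 2 flat chain) $T_j$ supported in $B_{r_j}(\cX_j)$. The key counting claim is that one can choose $z_j$, or a push-forward/deformation of it, so that $T_j$ lies in the $r_j$-neighbourhood of $\cX_j$ and its mass satisfies $\mathbf{M}(T_j)\le (1+o(1))\,\coverconst^{-1} r_j^k |\cX_j|$. This would follow from the fact that $\cX_j$ is an $r_j$-covering of $\spt T_j$, and Theorem \ref{thm:covering} bounds the covering number from below by roughly $\coverconst\mathcal H^k r_j^{-k}$. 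The argument should therefore be run in the reverse direction: we pass to a limit and apply the covering theorem to the limit, rather than bounding masses at finite $j$.

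In the limit, we use the compactness theorem for mod 2 flat chains with bounded mass and boundaryless, which extracts a flat limit $T$ supported in the Hausdorff limit $K$ of the $\cX_j$. The hard step, which we expect to be the main obstacle, is twofold. First we must show that $T$ is nontrivial with a \emph{persistent isoperimetric} property: since $\gamma_j$ survives until radius $1$, $T_j$ is not a boundary in the $(1-\epsilon)$-neighbourhood of its support, and this passes to $T$, so that $T$ cannot be filled inside $B_{1-\epsilon}(\spt T)$. A filling-radius type inequality, using the fact that the cone over $T$ from any point within distance $\rho$ of $\spt T$ fills it, together with the isoperimetric inequality, then gives $\mathbf M(T)\ge s_k$ (up to $\epsilon$), with equality for the round unit $k$-sphere; this is the Gromov filling-radius comparison $\FillRad\le C\,\mathbf M^{1/k}$ in sharp form. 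Second, we must relate the counts $|\cX_j|$ to $\mathcal H^k(\spt T)$: by lower semicontinuity of covering numbers under Hausdorff convergence of a rectifiable piece of $\spt T$, we get $\liminf r_j^k|\cX_j|\ge \coverconst\,\mathcal H^k(\spt T)\ge\coverconst\,\mathbf M(T)$. Both steps need care: the covering-theorem hypotheses (Minkowski content equals Hausdorff measure) only hold for nice sets, so we would approximate $\spt T$ from inside by compact $C^1$ pieces carrying most of the mass, where Theorem \ref{thm:covering} applies.

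Combining the two bounds gives $\liminf_j r_j^k|\cX_j|\ge\coverconst s_k$. Since $r_j\le 1/\pi_j$, this yields $|\cX_j|\ge(1-o(1))\coverconst s_k\pi_j^k$, which contradicts the assumption. If the sharp filling-radius constant with the round sphere as extremizer is not available for general $T$, the fallback is to use the weaker but sufficient statement that a cycle which cannot be filled within distance $1$ of its support must have mass at least that of the unit sphere. We would attempt to prove this via the monotonicity and coning inequality, which must be verified, and this is the crux of the whole argument.
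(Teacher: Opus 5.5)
Your architecture matches the paper's: normalize death to $1$, extract a flat limit, show the limit keeps filling radius near $1$, then combine a sharp filling inequality with a covering lower bound. However, each load-bearing step is either missing or rests on a claim that fails.

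\textbf{Compactness is not set up.} The flat compactness theorem needs a uniform mass bound, and you never obtain one. Your bound $\mathbf{M}(T_j)\le(1+o(1))\coverconst^{-1}r_j^k|\cX_j|$ applies Theorem~\ref{thm:covering} at finite $j$ to sets that vary with $j$ and have structure at scale $r_j$. That asymptotic statement does not allow this, as you note yourself. Moreover, a raw representative in $\CC_{r_j}(\cX_j)$ has no a priori bound on simplices per vertex when points cluster. The paper instead passes to a maximal $\eta b_j$-separated subset $Y_j\subset\cX_j$ and pushes the class into $\CC_{(1+\eta)b_j}(Y_j)$, where packing bounds the degree. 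This gives the non-sharp but uniform bound $\mathbf{M}\le C_{\mathrm{net}}(d,k,\eta)h_j^k|Y_j|$ (Lemma~\ref{lem:separated-net-regularization}); the sharp constant enters only later.

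The reduction to a bounded region is also not a short argument. For $k\ge2$, a component of $B_{r_j}(\cX_j)$ can have diameter up to order $r_j|\cX_j|\sim\pi_j^{k-1}$ within your point budget. A bounded-mass representative can carry long thin fingers or distant pieces. You give no rule for where to centre the translation so that the limit is nonzero and still carries the filling obstruction. Excising pieces changes the cycle, its filling radius, and the cover. The paper handles this in Lemmas~\ref{lem:noncollapsing}--\ref{lem:persistent-profile} and \ref{claim:cover-localization-caps}:
it cuts along small-mass slices and closes with small mass-minimizing caps near the cut. It iterates this a bounded number of times and proves one profile keeps $\FillRad\ge1-\varepsilon$. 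It then separately pays for covering the caps.

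\textbf{The two limiting lower bounds are not secured.} ``Lower semicontinuity of covering numbers under Hausdorff convergence'' is false at shrinking scales. Take $K$ a $k$-disk and $K_j$ a finite $e_j$-net of $K$ with $e_j\gg r_j$; then $K_j\to K$ in Hausdorff distance, yet $r_j^k\covnum_{r_j}(K_j)\to0$. Flat convergence gives only one-sided support convergence at an uncontrolled rate. So $\cX_j$ need not be an $O(r_j)$-cover of $\spt T$ or of any $C^1$ piece of it, and approximating $\spt T$ from inside does not address this.

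What makes the bound true is the mod-$2$ structure. At a tangent point of $T$, for most $u$ in the projected disk the slice of $T_j$ over $u$ has odd parity. Hence the fibre over $u$ meets $\spt T_j$, and the projected balls cover most of a flat disk, to which Theorem~\ref{thm:covering} does apply. A differentiation argument then gives that the normalized number of centres is asymptotically at least $\coverconst\mathbf{M}(T)$ (Lemma~\ref{lem:varying-cover}).

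Finally, coning plus a Federer--Fleming isoperimetric inequality yields only $\mathbf{M}(T)\ge c\,\FillRad(T)^k$ with a non-sharp $c$. Your fallback is not weaker: by scaling, ``$\FillRad\ge1\Rightarrow\mathbf{M}\ge s_k$'' \emph{is} the sharp inequality. You must import Gromov's sharp Euclidean inequality (Theorem~\ref{thm:sharp-fillrad-pseudomanifold}) and transfer it to the limit current, which need not be a manifold. The paper does this via a pseudomanifold parametrization of polyhedral $\mathbb{Z}_2$ cycles and polyhedral approximation with controlled filling radius (Lemma~\ref{lem:pseudomanifold-parametrization}, Corollary~\ref{cor:sharp-fillrad-current}). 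Your assertion that non-fillability ``passes to $T$'' also needs proof (Lemma~\ref{lem:fillrad-semicontinuity}).
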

This statement is precisely a persistent isoperimetric inequality for \v Cech filtrations. It lower bounds the number of points needed in terms of the required persistence (asymptotically). 
Note that this statement is entirely deterministic, and requires substantial geometric work, which is somewhat orthogonal to the main probabilistic arguments we prove in this paper. We therefore postpone the proof to Section \ref{sec:optimal}. We remark  that the proof  makes extensive use of the fact that homology is taken over $\mathbb{Z}_2$, but we conjecture that the result should hold for any field $\mathbb{F}.$

\section{Proof of Theorem \ref{thm:main} -- Lower Bound}\label{sec:lower}

Fix $\eps>0$, and let $\pi_L = (1-\eps)\pilim \Delta_{k,n}$. Our goal in this section is to prove the following lemma.

\begin{lem}\label{lem:lower}
    Let $f:\R^d\to\R$ be any pdf. Then,
    \[
        \limninf \prob{\Pi_{k,n} < \pi_L} = 0.
    \]
\end{lem}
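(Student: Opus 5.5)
The plan is a second-moment (or disjoint-cells independence) argument based on an explicit configuration: a near-optimal covering of a small $k$-sphere. The lower bound needs only that some such configuration appears somewhere with high probability. Since $f$ is an arbitrary pdf, we first localize. By the Lebesgue differentiation theorem there is a point $x_0$ and a small cube $Q$ around it where $f\ge c_0>0$ on $Q$ (up to a null set, which is harmless for Poisson counts). We then work only with $\cP_n\cap Q$. Since points outside $Q$ could in principle kill or alter a cycle, we require that each configuration we build comes with an \emph{empty shell}: no other points in a neighborhood of the sphere and its interior. In that case the birth and death of the local cycle are determined only by the local points.

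\textbf{Deterministic template.} Set $\pi_0=(1-\eps/2)^{-1}\pi_L$, slightly larger than $\pi_L$. By Theorem~\ref{thm:covering} applied to $A=\S^k\subset\R^{k+1}\subset\R^d$ (compact and rectifiable, with Minkowski content equal to Hausdorff measure), there is a set $\cX$ of $m\le(1+\eps')\coverconst s_k\pi_0^k$ points on the unit $k$-sphere covering it at radius $1/\pi_0$. Then $B_{r}(\cX)$ contains a $k$-cycle homologous to the sphere (by nerve/deformation arguments) for $r$ slightly above $1/\pi_0$, and it survives until $r\approx 1$, when the enclosed $(k+1)$-disk fills. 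We perturb: if each point is moved by at most $\eta/\pi_0$, the birth is at most $(1+2\eta)/\pi_0$ and the death is at least $1-\eta$. This gives $\pi\ge\pi_L$ for small $\eta$ relative to $\eps$. We rescale by a spatial scale $\rho$. The event $E_x$ requires one Poisson point in each ball $B_{\eta\rho/\pi_0}(x+\rho y)$, $y\in\cX$, and no points in the remainder of $B_{2\rho}(x)$, so that the configuration is isolated and its cycle is not killed early.

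\textbf{Probability estimate and choice of scale.} We have $\P(E_x)\ge \prod (n c_0 \omega_d(\eta\rho/\pi_0)^d e^{-\cdot})\cdot e^{-n\fmax' \omega_d(2\rho)^d}$. We choose $\rho$ with $n\rho^d=1$ (sparse regime), so the emptiness factor is a constant. Then $\P(E_x)\ge \exp\{-m[\,d\log\pi_0 + O(1)]\}$. With $m\approx(1+\eps')\coverconst s_k\pi_0^k$ and $\pi_0^k\approx(1-\eps)^k(1-\eps/2)^{-k}\frac{\log n}{\logg n}\cdot\frac{1}{\coverconst s_k(d/k-1)}$, and using $\log\pi_0\approx\frac1k\logg n$, the exponent becomes $\frac{d}{d-k}(1-\eps'')\log n$ for some $\eps''>0$ once $\eps'$ and $\eta$ are small. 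Hmm, this exponent would need to be compared with the number of disjoint cells, which is only $\sim n$ at scale $\rho=n^{-1/d}$. This means the scale must be tuned. We instead take $n\rho^d=\lambda$ with $\lambda=\lambda_n\to0$ polylogarithmically, e.g.\ $\lambda=(\log n)^{-\alpha}$. Then the per-point factor is $\log(\pi_0^d/\lambda)$, while the number of disjoint cells is $\sim n/\lambda$, and emptiness costs $e^{-O(\lambda)}$. Optimizing $\lambda$ against $\pi_0$ is exactly where $d/k-1$ arises. We take $\lambda=\pi_0^{-k}$ up to constants, so that the $m$ points essentially saturate the shell. Then $\log(\pi_0^d/\lambda)=(d-k)\log\pi_0\approx\frac{d-k}{k}\logg n$, and $m(d-k)\log\pi_0\approx(1-\eps)^{k}(1+\eps')(1-\eps/2)^{-k}\log n<(1-\eps''')\log n$. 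The number of cells $n/\lambda\ge n$ therefore gives $\E[\#\{x:E_x\}]\ge n^{\eps'''}\to\infty$. Carrying out this calibration, including the $O(1)$ and Stirling-type terms, is the main obstacle. Those terms are $O(m)=o(\log n)$, so they are negligible.

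\textbf{Conclusion.} We place $\ge n^{1-o(1)}$ disjoint cells $B_{2\rho}(x_i)$ inside $Q$. The events $E_{x_i}$ are independent by the Poisson property, so $\P(\text{no }E_{x_i})\le(1-p_n)^{M_n}\le\exp(-M_np_n)\to0$. On $E_{x_i}$ the isolated cycle, whose shell is empty, has $\pi\ge\pi_L$, and it is non-essential since its death tends to $0$. Hence $\Pi_{k,n}\ge\pi_L$ with probability tending to one.
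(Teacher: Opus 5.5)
Your strategy is the paper's: a near-optimal covering of a small $k$-sphere with an empty neighbourhood, repeated over $n^{1-o(1)}$ disjoint independent cells. However, two quantitative choices in your write-up are wrong, and each one destroys the sharp constant.

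First, the scale calibration goes the wrong way. With $n\rho^d=\lambda$, the expected number of good cells is of order $n\,\lambda^{m-1}\pi_0^{-dm}e^{O(m)-\Theta(\lambda)}$. The factor $n/\lambda$ gained in the number of cells is outweighed by the factor $\lambda^m$ lost in the $m$ one-point probabilities, so shrinking $\lambda$ only hurts. For your choice $\lambda=\pi_0^{-k}$ you get $\log(\pi_0^d/\lambda)=(d+k)\log\pi_0$, not $(d-k)\log\pi_0$. The exponent is then about $\frac{d+k}{d-k}\log n$ against only $n^{1+o(1)}$ cells, so the expected count tends to $0$. The same happens (for small $\eps$) with $\lambda=(\log n)^{-\alpha}$ or any $\lambda=O(1)$, where the exponent is already about $\frac{d}{d-k}\log n$. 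Your identity $(d-k)\log\pi_0$ holds for $\lambda\asymp\pi_0^{k}\asymp m$, which is presumably what you meant by the points saturating the shell. That means $n\rho^d\to\infty$ like $(\log n)^{1-o(1)}$. Then the emptiness cost $e^{-\Theta(\lambda)}$ is $n^{-o(1)}$, there are $n/\lambda=n^{1-o(1)}$ cells, and your computation goes through. The paper's choice $nR^d=\alpha\log n$ with $\alpha$ small is of the same kind.

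Second, once $n\rho^d\to\infty$, an empty ball $B_{2\rho}(x)$ is too small. The shell $B_{3\rho}(x)\setminus B_{2\rho}(x)$ then contains $\Theta(\lambda)$ points. Independently of $E_x$, and with probability tending to one, it is $\kappa\rho$-dense for any fixed $\kappa>0$. For $k\le d-2$ this kills your cycle at radius about $\rho/2$. The balls around the configuration merge radially with those around the shell points, so the $k$-sphere becomes homologous to a concentric $k$-sphere of radius about $2.5\rho$. That sphere bounds a $(k+1)$-dimensional hemisphere lying inside the covered shell. Thus $E_x$ only guarantees $\pi\approx\pi_0/2<\pi_L$.

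The fix is to empty $B_{3\rho}(x)$, exactly as the paper empties the box $[-3,3]^d$ (scaled by $R$, so every other point is at distance at least $3R$ from the centre). Take the round $(d-k-1)$-sphere of radius $\rho$ centred at $x+\rho e_1$ in $x+\mathrm{span}(e_1,e_{k+2},\dots,e_d)$. It lies at distance exactly $\rho$ from the $k$-sphere and inside $\overline{B}_{2\rho}(x)$. So it avoids $B_r(\cP_n)$ for $r<\rho(1-\eta/\pi_0)$ and links the $k$-sphere once, certifying death at least $\rho(1-\eta/\pi_0)$. The emptiness cost is still $e^{-\Theta(\lambda)}$. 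With these two corrections your argument is essentially the paper's.

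Two smaller points:
\begin{itemize}
\item The $\eta\rho/\pi_0$-balls around a minimal covering need not be disjoint. Either thin the covering to a separated net as the paper does, or require at least one point per ball and use the Harris--FKG inequality.
\item Lebesgue differentiation does not produce a cube on which $f\ge c_0$ a.e.\ (take $f$ supported on a product of fat Cantor sets). The paper also simply assumes such a box exists, so this gap is shared.
\end{itemize}
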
   
To prove Lemma \ref{lem:lower} we construct a specific type of $k$-cycle with persistence at least $\pi_L$, which we call an
\emph{efficient configuration}. By ``efficient" we mean that these configurations aim to produce a large $\pi$-value with as few points as possible.
The key idea is to look for configurations of points that cover a $k$-dimensional sphere (see Figure \ref{fig:eff}).
The gaps between the points will control the birth time of the $k$-cycle, while the radius of the sphere will control the death time.
We will show that, as $n$ goes to infinity, with high probability there is at least one of these efficient configurations, with $\death/\birth$ ratio larger than $\pi_L$. 

We will consider a specific type of covering of the sphere $\S^k$ embedded in $\R^d$, which we call an ``efficient net".
Let $r>0$ and $\delta\in (0,1)$. An efficient $(r,\delta)$-net of $\S^k$ is a subset $\cN_{r,\delta}\subset \S^k$ that satisfies:
    \begin{enumerate}
        \item For all $x\in \S^k$, there is some $p\in\mathcal{N}$ such that $\|x-p\|<(1-\delta)r$.
	\item The balls $B_{\delta r}(p)$ and $B_{\delta r}(p')$ are disjoint for any $p,p'\in \cN_{r,\delta}$.
    \end{enumerate}

We can construct $\cN_{r,\delta}$ as follows. Let $\cA_{r,\delta}$ be the centers of a minimal ball-covering of $\S^k$ at radius $(1-3\delta)r$. 
Pick a point $p\in \cA_{r,\delta}$, and remove all $p'\in\cA_{r,\delta}\cap B_{2\delta r}(p)$ ($p'\ne p$).
Repeat this process until nothing else can be removed. By the triangle inequality, this results in an efficient $(r,\delta)$-net. Furthermore, as an efficient net is a constrained covering, we have
\eqb\label{eqn:net_bound}
\covnum_{(1-\delta)r} (\S^k)\leq |\mathcal{N}_{\delta,r}|\leq \covnum_{(1-3\delta)r}(\S^k).
\eqe

\begin{defn}[Efficient configurations]\label{def:eff_config}
	Let $\cP\subset \R^d$ be a finite set, $r$ and $\delta\in(0,1)$. Let $\S^k$ be the unit $k$-dimensional sphere embedded in the first $k+1$ coordinates of $\R^d$, centered at the origin. Let $\cN_{r,\delta}$ be an efficient $(r, \delta)$-net of  $\S^k$. 
	We say that $\cP$ contains an efficient configuration if the following holds:
\begin{enumerate}
    \item 
    $|B_{\delta r}(p)\cap \cP| = 1$ for all $p\in \mathcal{N}_{r,\delta}$.
    
    \item Aside from the points in (1), there are no other points from $\cP$ lying in $Q:= [-3,3]^d$, i.e., 
    \[
    \cP \cap (Q\bs B_{\delta r}(\cN_{r,\delta})) = \emptyset.
\]
\end{enumerate}
\end{defn}

Figure \ref{fig:eff} shows an example of an efficient configuration for $k=1$.

\begin{lem}\label{lem:efficient}
Let $r < 1/2$. If $\cP$  contains an efficient configuration, then there is a $k$-cycle  $\gamma\in \PH_k(\cP)$ such that $$\pi(\gamma) \ge1/r-\delta.$$ 
\end{lem}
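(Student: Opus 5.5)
Let $\cX := \cP\cap B_{\delta r}(\cN_{r,\delta})$ be the points of the efficient configuration, one point $x_p$ in each small ball around each $p\in\cN_{r,\delta}$. Every other point of $\cP$ lies outside $Q=[-3,3]^d$. The plan is to exhibit a $k$-cycle that is born by radius $r$ and is still alive at radius $1-\delta r$. This gives $\pi(\gamma)\ge (1-\delta r)/r = 1/r-\delta$.

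\textbf{Birth.} By property (1) of the net, every $y\in\S^k$ is within $(1-\delta)r$ of some $p$, and $\|p-x_p\|<\delta r$. Hence $\|y-x_p\|<r$, so $\S^k\subset B_r(\cX)\subset B_r(\cP)$. To see that this produces a nontrivial class, I would use the radial projection $\rho(z)=\proj(z)/\|\proj(z)\|$, where $\proj$ is the projection onto the first $k+1$ coordinates; this gives a map $\R^d\setminus L\to\S^k$, with $L$ the complementary $(d-k-1)$-plane through the origin. First, $B_r(\cX)$ avoids $L$. Indeed, each $x_p$ is within $\delta r$ of $\S^k$, so $\|x_p-\ell\|\ge 1-\delta r$ for every $\ell\in L$, and $1-\delta r>r$ since $r<1/2$. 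Second, $B_r(\cP)$ avoids $L\cap B_{1-\delta r}(0)$, and this needs the other points too. A point outside $Q$ has distance greater than $3-r>2$ from $B_1(0)$, so its $r$-ball cannot reach $B_{1}(0)$ and plays no role near $\S^k$ or near the origin. Now the inclusion $\S^k\hookrightarrow B_r(\cP)$ composed with $\rho$ is the identity of $\S^k$. So the fundamental class $[\S^k]$ gives a nonzero class in $\Hg_k(B_r(\cP))$, equivalently in $\Hg_k(\CC_r(\cP))$ by the Nerve Lemma. This yields a persistent class $\gamma$ with $\birth(\gamma)\le r$.

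\textbf{Death.} For every $t<1-\delta r$, the class still maps to a nonzero element of $\Hg_k(B_t(\cP))$. The obstruction is the same map $\rho$, except that points outside $Q$ can now have balls reaching $L$. To avoid this, I would not work with $\rho$ on all of $B_t(\cP)$. Instead I would localize. Split $B_t(\cP)=U\cup V$, where $U=B_t(\cX)$ and $V$ is the union of the balls around the far points. For $t<1$, $V$ is disjoint from $B_{2}(0)$, while $U\subset B_{1+\delta r+t}(0)$. Since both $U$ and $V$ only grow with $t$, I will take $t\le 1$. Then $U$ avoids $L$ (distance at least $1-\delta r-t>0$), so $\rho_*$ detects $[\S^k]$ in $\Hg_k(U)$. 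To pass from $U$ to $U\cup V$, I would use Mayer--Vietoris. A cleaner alternative is to note that $\rho$ on $U$ extends continuously to $U\cup (V\setminus\text{neighbourhood of }L)$. Cleaner still: apply $\rho$ only on the part of $B_t(\cP)$ inside $B_{3/2}(0)$ and use the excision/retraction $B_t(\cP)\cap B_{3/2}(0) = U\cap B_{3/2}(0)$, which is a union of connected components. If it is a union of components, the class lives in that summand.

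Actually the simplest correct route is this. For $t<1-\delta r\le 1$, every ball of radius $t$ around a point outside $Q$ lies in $\{\|z\|_\infty>2\}$, while $U\subset\{\|z\|_\infty< 2+\delta r\}$. I would choose the cube size so that $U$ and $V$ are disjoint, using that $Q$ has half-width $3$ and $t,\delta r<1$: the distance from $\S^k$-near points to outside $Q$ is more than $3-1-\delta r>2t$. Then $B_t(\cP)=U\sqcup V$, so $\Hg_k(B_t(\cP))=\Hg_k(U)\oplus\Hg_k(V)$. On $U$, the projection $\rho$ is defined and the class $[\S^k]$ maps to the generator, so it is nonzero. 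Hence $\death(\gamma)\ge 1-\delta r$ and $\pi(\gamma)\ge 1/r-\delta$.

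\textbf{Main obstacle.} The main care point is this disjointness and $L$-avoidance bookkeeping, together with checking that "$\gamma\in\PH_k$" with $\birth\le r$ and $\death\ge 1-\delta r$ really gives a bar with ratio at least $1/r-\delta$. For that I would take the bar in the interval decomposition that contains the image of $[\S^k]$ and has the largest death. A class alive on $[r,1-\delta r)$ forces some interval containing this range, so its $\death/\birth\ge(1-\delta r)/r$.
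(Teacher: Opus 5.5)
Your proof is correct and is essentially the paper's argument made explicit. The paper uses the sandwich $\S^k\subset B_r(\cX)\subset B_{r(1+\delta)}(\S^k)$ and uses condition (2) to isolate the remaining points; your radial projection $\rho$ (the retraction of the tube around $\S^k$, valid while $t+\delta r<1$) and your splitting $B_t(\cP)=B_t(\cX)\sqcup V$ spell out exactly these two steps, and they give the same death bound $1-\delta r$. Your first ``birth'' paragraph is not rigorous as written, since $\rho$ need not be defined on balls around points outside $Q$ that meet $L$. This does not matter, because your final disjoint-union argument applies verbatim at $t=r$ and covers the birth step.
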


\begin{proof}

For this proof we will use the Nerve Lemma mentioned earlier, and consider the homology of $B_r(\cP)$ as a proxy for $\cC_r(\cP)$.
Let $\cX\subset\cP$ be an efficient configuration, then at radius $r$, we have $\S^k \subset B_r(\cX) \subset B_{r(1+\delta)}(\S^k)$ by the properties of the chosen net. Condition (2) in Definition \ref{def:eff_config} guarantees that this cover is indeed a non-trivial $k$-cycle, whose death time is at least $1-\delta r$. Thus, we have a $k$-cycle $\gamma$ with $\pi(\gamma)\ge (1-\delta r)/r$, completing the proof.
\end{proof}
\begin{figure}[htbp]
    \centering
\includegraphics[width=0.4\textwidth]{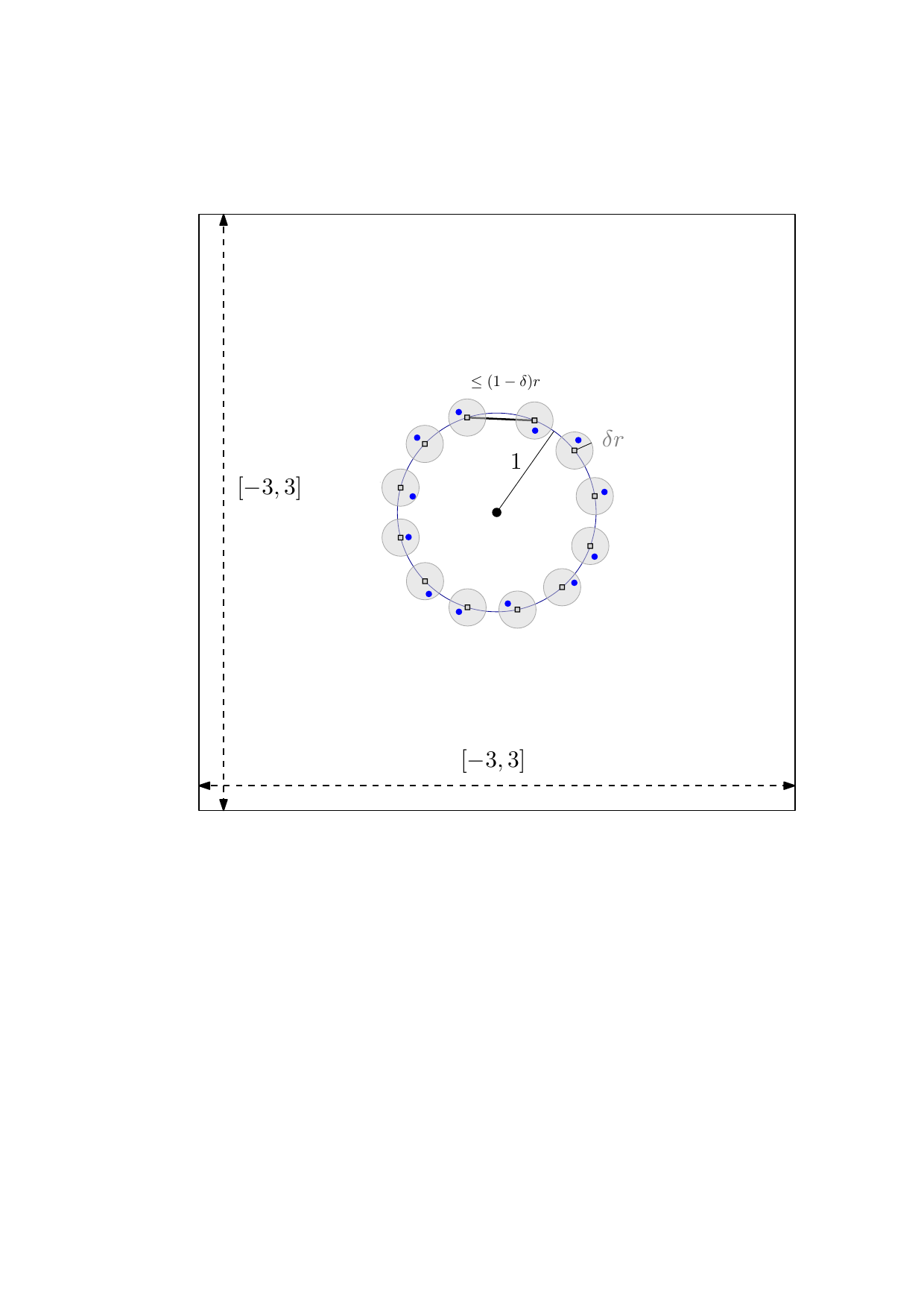}
    \caption{An efficient configuration for $k=1$. The points are arranged near a circle, are well-separated, and there are no other points in $[-3,3]^2$. The birth radius here will be less than $r$, the death radius more than $1-\delta r$ and therefore $\pi(\gamma)\ge 1/r-\delta$. }
    \label{fig:eff}
\end{figure}

\begin{proof}[Proof of Lemma \ref{lem:lower}]
    Let $f:\R^d\to\R$ be a probability density function.
    There exists a closed box
    $D\subset \supp(f)$ with a positive measure, such that $\fminD:=\mathrm{ess\,inf}_{x\in D}f(x)>0$ and $\fmaxD:=\mathrm{ess\,sup}_{x\in D}f(x)<\infty$. 
   Without loss of generality, we will assume that $D$ has a side-length of $1$ (otherwise, we can scale all the arguments below by the  side-length of $D$, which will not affect the $\pi$-values).
   
   Take $R$ to be such that $nR^d = \alpha\log n$, where $\alpha>0$ will be set later. Next, take a grid of size $6R$ in $\R^d$, and let $D_1,D_2,\ldots, D_N$ be the grid boxes that are contained in $D$ (so that $\fminD \le f(x)\le \fmaxD$ for all $x\in D_i$). The number of these boxes is then $N= (6R)^{-d}+ o(R^{-d}) = \Theta(n/\log n)$. Setting $r=R/\pi_L$, our goal is to show that at least one of the $D_i$'s contains an efficient configuration in the sense of Definition \ref{def:eff_config}  (after scaling by $R$). This will imply that we have a cycle $\gamma \in \PH_k(\cP_n)$ with 
   \[
   \pi(\gamma) \ge R/r-\delta = \pi_L -  \delta.
   \]
  Since $\pi_L\to \infty$, we can repeat the same argument replacing $\eps$ with $\eps/2$, which then implies that for large enough $n$, $\pi(\gamma)\ge \pi_L$.
  
Let $\delta>0$, and let $\cN_{r,\delta}^{(i)}$ denote the efficient net and $\cN_{r/R,\delta}$ denote the net after scaling by $R$ and translating to the center of $D_i$.
Let $\cE_i$ be the event that $D_i$ contains an efficient configuration (scaled by $R$), then
\[
	\prob{\cE_i}= e^{-nF(D_i)} \prod_{p\in \cN_{r,\delta}^{(i)}}  nF(B_{\delta r}(p)) \ge e^{-n\fmaxD (6R)^d} (n \fminD \omega_d (\delta r)^d)^{|\cN_{r,\delta}^{(i)}|},
\]   
where $\omega_d$ is the volume of a $d$-dimensional unit ball. 
From Theorem \ref{thm:covering}, if we take $\delta<\eps/4$, then for $n$ large enough,
\[
\covnum_{(1-3\delta)/\pi_L}(\S^k) \cdot \param{\frac{1-3\delta}{\pi_L}}^k \le \param{\frac{1-4\delta}{1-\eps}}^k s_k \coverconst.
\]
From \eqref{eqn:net_bound} and \eqref{eqn:lim_cech}, this implies that
\begin{equation}\label{eqn:M}
  |\cN_{r,\delta}^{(i)}|\le  \covnum_{(1-3\delta)/\pi_L}(\S^k) \le \param{\frac{1-4\delta}{1-3\delta}}^k \param{\frac{\pi_L}{1-\eps}}^k s_k \coverconst = (1-\delta')^k \frac{k}{d-k}\cdot \frac{\log n}{\logg n},
\end{equation}
where $\delta' = \delta/(1-3\delta)$.
Denoting the last term as $M$, and using  the fact that $r=R/\pi_L$ we have
\[
\prob{\cE_i}\ge  ( C_1 n (R/\pi_L)^d)^{M} e^{-nC_2R^d},
\]
where $C_1=\fminD \omega_d \delta^d$ and $C_2 = \fmaxD 6^d$, and we used the facts that  $\pi_L \propto \param{\frac{\log n}{\logg n}}^{1/k}$ and $n R^d = O(\log n)$, implying that $
n(R/\pi_L)^d \to 0$.
Taking the logarithm, and using \eqref{eqn:M},
\[
\splitb
\log(\prob{\cE_i}) &\ge M(\log(C_1)+\log(nR^d)-d\log(\pi_L))  -nC_2R^d\\
&=  M\param{C_3 +\logg(n)-\frac{d}{k}\log\param{\frac{\log n}{\logg n}}}  - \alpha C_2 \log n\\
&= -\log n ((1-\delta')^k+\alpha C_2) + o(\log n),
\splite
\]
for some $C_3>0$.
Therefore,
\[
\prob{\bigcap_i \cE_i^c} \le (1- n^{-((1-\delta')^k+\alpha C_2)})^N \le e^{-N n^{-(1-\delta')^k-\alpha C_2}}\le e^{-C_4 n^{1-(1-\delta')^k-\alpha C_2} / \log n},
\]
for some $C_4>0$, and we used the fact that $N=\Theta(n/\log n)$.
Taking $\alpha$ to be sufficiently small we then get
\[
\prob{\bigcap_i \cE_i^c} \to 0,
\]
implying there must exist a $k$-cycle with $\pi(\gamma)\ge \pi_L$. This completes the proof.

\end{proof}

\section{Proof of Theorem \ref{thm:main} -- Upper Bound}
\label{sec:upper}

In this section, we  let $\pi_U = (1+\eps)\pilim \Delta_{k,n}$. Our goal is to prove the following lemma.

\begin{lem}\label{lem:upper}
    Let $f\in \cD$. Then,
    \[
        \limninf \prob{\Pi_{k,n} > \pi_U} = 0.
    \]
\end{lem}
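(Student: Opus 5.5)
The plan is a first-moment (union bound) argument over the point configurations that can support a cycle with $\pi(\gamma)>\pi_U$. Proposition \ref{prop:optimal} says such configurations need many points. The probabilistic cost of those points, together with the empty region a long-lived cycle forces, should be smaller than $n^{-\eta}$ for some $\eta>0$.

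\textbf{Step 1: geometric reduction.} Suppose $\gamma$ has birth $b$ and death $\rho=\pi(\gamma)b$ with $\pi(\gamma)\ge\pi_U$. Let $\cX\subset\cP_n$ be the vertices of a $\mathbb Z_2$-representative at radius $b$, restricted to one connected component of $B_b(\cX)$. I will use three properties of $\cX$.
\begin{itemize}
\item[(a)] By Proposition \ref{prop:optimal}, $m:=|\cX|\ge M_U:=(1-\delta)\pi_U^k\coverconst s_k$.
\item[(b)] $\cX$ is connected in $\CC_b$, so it can be enumerated along a spanning tree with consecutive points within distance $2b$.
\item[(c)] Since the cycle survives up to radius $\rho$, there is a region of volume at least $c\rho^d$, inside a ball of radius $O(\rho)$ around $\cX$ (or at least $O(mb)$ away), that contains no point of $\cP_n$. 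If the neighbourhood were densely filled at scale $\rho$, the Čech complex would fill the cycle.
\end{itemize}
I expect (c) to need a short Čech/Nerve argument, e.g.\ via the critical point of the death event: this is an empty ball of radius comparable to $\rho$. To allow $m>M_U$, I will keep $m$ general and sum over it.

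\textbf{Step 2: first-moment computation.} Discretize $b$ geometrically and write $t=nb^d$. Assume first that $f\le\fmax$ near the configuration and $f\ge f_{\min}>0$ on the empty region. The expected number of such $m$-tuples is at most
\[
n\,\frac{m^{m-2}}{m!}\,(C\,t)^{m-1}\,e^{-c\,t\,\pi_U^d}.
\]
Here the factor $m^{m-2}$ counts spanning trees, $(Ct)^{m-1}$ comes from placing each point within $2b$ of its parent, and the exponential is the emptiness probability. Maximizing over $t$ gives $t\approx m/(c\pi_U^d)$, and the logarithm of the bound becomes
\[
\log n + m\log m - m(d\log\pi_U) + O(m) \;\le\; \log n - m\,(d-k)\log\pi_U\,(1+o(1)),
\]
using $m\ge M_U\asymp\pi_U^k$. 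Now $\log\pi_U=\frac1k\logg n\,(1+o(1))$ and $m\ge(1-\delta)(1+\eps)^k(\pilim)^k\coverconst s_k\,\frac{\log n}{\logg n}$. By \eqref{eqn:lim_cech} the subtracted term is at least $(1-\delta)(1+\eps)^k\log n$. With $\delta$ small this gives $n^{-\eta}$, and summing over $m\ge M_U$ (geometrically decaying) and over the $O(\log n)$ values of $b$ is still $o(1)$. Cycles whose birth $b$ is so small that $t\to0$ polynomially, or so large that $\rho$ exceeds the coverage radius, are handled separately. In the first case the $t^{m-1}$ factor kills them. In the second case the non-essential assumption excludes them.

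\textbf{Step 3: handling $f\in\cD$.} This is where I expect the main obstacle. The computation above needs a uniform comparison between the density near the points, which produces the factor $C$, and the density on the empty region, which produces $c$. The exponent must survive with constant exactly $1$, and only the ratio $C/c$ enters at $O(m)$ order, so the bound still works.
\begin{itemize}
\item For $\cD_1$: $f\le\fmax$ bounds the placement cost. The condition $F(B_r(x)\cap\supp f)\ge Cr^d$ bounds the emptiness probability from below, even when the empty ball sticks out of the support.
\item For $\cD_2$: I would split $\R^d$ into the bulk $\{\|x\|\le R_n\}$, where $f\ge n^{-\eta/2}$, and its complement. In the bulk, $f$ varies by at most a constant factor over balls of radius $O(\rho)$, since $\rho\to0$ there, and the argument applies verbatim. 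In the tail, points are so sparse that $\rho$ can be large. There I would use that the expected number of points in the tail region is $n^{o(1)}$. Any configuration of $m\asymp\log n/\logg n$ points has probability at most $(nF(\text{tail}))^m/m!$, which is superpolynomially small once $nF(\text{tail})\le(\log n)^{1-\kappa}$.
\end{itemize}
Tuning $R_n$ between these two regimes is the delicate part. Making the emptiness region in (c) rigorous with constants independent of $f$ is the second point I expect to require care.
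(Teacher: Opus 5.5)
\textbf{Class $\cD_1$.} Your route works here, but everything rests on the cap you mention only in passing. After optimizing over $t$, your Step~2 exponent is $\log n+m\log m-md\log\pi_U+O(m)$. The inequality you write next needs $\log m\le k\log\pi_U(1+o(1))$, which $m\ge M_U$ does not give: it bounds $\log m$ from below. In fact the bound is of order $e^{\Theta(m)}$ once $m\gtrsim\pi_U^d$, so there is no geometric decay in $m$ without an a priori restriction on $t$.

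The non-essential assumption supplies that restriction. Since $n\rho^d\le C\log n$, you get $t=nb^d\le C\log n/\pi_U^d$. Then the tree count alone, $n\,(C't)^{m-1}m^{m-2}/m!\le\exp\bigl(\log n-(m-1)\tfrac{d-k}{k}\logg n\,(1+o(1))\bigr)$, is already summably small over $m\ge M_U$. So the emptiness estimate (c) is superfluous, and what remains is exactly the paper's proof. With $\beta=M_U\logg n/\log n$, Corollary~\ref{cor:ad_con} forces the birth above $nr^d=a\fmax^{-1}(\log n)^{-1/\beta}$, coverage forces the death below $nR^d=C\log n$, and $1+1/\beta<d/k$. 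Monotonicity of connectivity in the radius also removes the need to discretize $b$.

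If you keep (c), note that the death critical point does not locate the empty ball near $\cX$. The correct statement is that some $y\in\mathrm{conv}(\cX)$ has $\dist(y,\cP_n)\ge\rho$. Otherwise the convex-hull realization of the representative cones off inside $\mathrm{conv}(\cX)\subset B_s(\cP_n)$ for some $s<\rho$.

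\textbf{Class $\cD_2$: the genuine gap.} You leave this case open, and it does not close along the lines you sketch.
\begin{enumerate}
\item With bulk $\{f\ge n^{-\eta/2}\}$, the tail carries about $n^{1-\eta/2}$ points in expectation (up to logarithmic factors), not $n^{o(1)}$.
\item No single radial cutoff $R$ repairs this. Write $nf(R)=(\log n)^a$. Your tail bound needs $nF(\{\|x\|>R\})\asymp(\log n)^{a+d/q-1}\le(\log n)^{1-\kappa}$, i.e.\ $a<2-d/q$. Bounded variation of $f$ over the local coverage scale $(\log n)^{(1-a)/d}$ at $\|x\|\approx(\log n)^{1/q}$ needs $a\ge 1+d(q-1)/q$. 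The two are compatible only if $d<1$.
\item Cycles straddling the cutoff are not treated at all.
\item Without control of the radial extent of $\cX$, the ratio $\max_{\cX}f/\min_{\cX}f$ enters your exponent as $m\log(\max_{\cX}f/\min_{\cX}f)$. This can be of the same order $\log n$ as the main term, or larger.
\end{enumerate}

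\textbf{The missing idea} is the paper's adaptive argument.
\begin{enumerate}
\item Key the connectivity radius to the densest point of the configuration, $r_*=r_n(x_*)$. Then Lemma~\ref{lem:ad_con} pays nothing for density variation, since every other point has $f\le f(x_*)$.
\item Set $r_\gamma=\birth(\gamma)$ and define $L_\gamma$ by $r_n(L_\gamma)=r_\gamma$. The shell $\{\|x\|\ge L_\gamma\}$ cannot host a connected piece of $M_n\approx M_U$ generator points. Beyond $L_n$ it cannot even host $m_0$ such points, by Corollary~\ref{cor:ad_small_comp}.
\item Hence the generator enters $B_{L_\gamma}(0)$ and lies in the centered ball $B_{L_\gamma+2M_nr_\gamma}(0)$ (resp.\ $B_{L_\gamma+2m_0r_\gamma}(0)$).
\item Since $f$ is radially decreasing, Corollary~\ref{cor:ad_cov} covers centered balls adaptively. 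This bounds the death by $R_n(L_\gamma+2M_nr_\gamma)$ with $f(L_\gamma)/f(L_\gamma+2M_nr_\gamma)=1+o(1)$ (resp.\ by $R_n(L_\gamma)+2m_0r_\gamma$).
\end{enumerate}
This yields $\pi(\gamma)=O\bigl((\log n)^{(1+1/\beta)/d}\bigr)=o(\pi_U)$ uniformly over all cycles, with no emptiness estimate at all.
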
   

The main idea behind the proof of Lemma \ref{lem:upper} is to  establish the smallest number of points required to construct a cycle of persistence $\pi_U$. Then, by lower bounding the required birth radius, we show that such configurations are too large to appear at the required birth radius.

We will use a method related to the idea of nonuniform random geometric graphs studied in \cite{iyerNonuniformRandomGeometric2012a}. The main idea is to introduce a notion of connectivity and coverage of $B_r(\cP_n)$, where instead of considering balls of a fixed radius, we take a radius that is adapted to the density at every point.

\subsection{Adaptive connectivity}
Let $x\in \R^d$ be such that $f(x)>0$, and define $r_n(x)$ via
\eqb\label{eqn:adapt_r}
nr_n^d(x) := \frac{a(\log n)^{-1/\beta}}{f(x)},
\eqe
where $a,\beta>0$ will be determined later. If $f(x)=0$ we define $r_n(x) = \infty$, but this will have no effect on the results.
Let $\cX\subset \R^d$ be finite, and define
\[
    x_* := \arg\max_{\cX} f(x),\quad r_* := r_n(x_*) = \arg\min_{\cX} r_n(x).
\]
Later, we will be interested to check whether there are sets $\cX$ for which $B_{r_*}(\cX)$ is connected. The following lemma addresses that.

\begin{lem}\label{lem:ad_con}
    Let $D\subset \R^d$ be a Borel set, and $m\ge 2$. 
    Then, for sufficiently small values of $a$, we have
    \[
        \prob{\exists \cX \in (\cP_n\cap D)^{(m)} : B_{r_*}(\cX)\text{ is connected} } \le \frac{en}{m} F(D)\param{\frac12(\log n)^{-1/\beta}}^{m-1},
    \]
where $\cP^{(m)}$ is the collection of all subsets of size $m$ of $\cP$, and $F(D) = \int_D f(z)dz$.
\end{lem}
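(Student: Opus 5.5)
We bound the probability by the expected number of $m$-subsets $\cX$ of $\cP_n\cap D$ with $B_{r_*}(\cX)$ connected, and then use Markov's inequality. By the multivariate Mecke formula,
\[
\mean{\#\{\cX\}} = \frac{n^m}{m!}\int_{D^m} \ind\{B_{r_*}(x_1,\ldots,x_m)\text{ connected}\}\prod_{i=1}^m f(x_i)\,dx_i .
\]
If $B_{r_*}(\cX)$ is connected, then the geometric graph on $\cX$ with threshold $2r_*$ is connected. It therefore contains a spanning tree, and there are at most $m^{m-2}$ labelled trees on $m$ vertices. Fix a tree $T$. We bound the integral of $\prod f(x_i)$ over configurations in which every edge $\{x_i,x_j\}$ of $T$ satisfies $\|x_i-x_j\|\le 2r_*$.

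The key point is the choice of the ``anchor''. The point $x_*$ has the largest density in $\cX$, so $r_*=r_n(x_*)$ is the smallest adaptive radius. For every $x_j\in\cX$ we then have
\[
n\omega_d(2r_*)^d f(x_j)\le n\omega_d 2^d r_n(x_*)^d f(x_*)=\omega_d2^d a(\log n)^{-1/\beta}.
\]
To use this, we split the configuration space according to which index attains the maximum of $f$, giving $m$ cases, and root $T$ at that index $i_*$. We integrate the leaves inward, i.e.\ in reverse BFS order from the root. For a non-root vertex $x_j$ with parent $x_p$, we integrate $f(x_j)$ over the ball $B_{2r_*}(x_p)$. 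On the region where $i_*$ is the argmax we have $f(x_j)\le f(x_{i_*})$, so this integral is at most $\omega_d(2r_*)^d f(x_{i_*})$. Multiplied by the factor $n$ for that vertex, it is at most $\omega_d2^da(\log n)^{-1/\beta}$.

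The obstacle is that $r_*$ depends on $x_{i_*}$, the root, and not on the variable being integrated. This is harmless because we integrate the root last: conditionally on $x_{i_*}$, both $r_*$ and the bound $f\le f(x_{i_*})$ are fixed. Finally, integrating $nf(x_{i_*})$ over $D$ gives $nF(D)$. Collecting the factors,
\[
\mean{\#} \le \frac{1}{m!}\, m\, m^{m-2}\, nF(D)\,\bigl(\omega_d2^da(\log n)^{-1/\beta}\bigr)^{m-1}.
\]
Using $m^{m-1}/m!\le e^{m}/m$, we get $\le \frac{n}{m}F(D)\,e^{m}(\omega_d2^da)^{m-1}(\log n)^{-(m-1)/\beta}$. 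Choosing $a$ small enough that $e\,\omega_d2^d a\le 1/2$ gives $e^{m}(\omega_d 2^d a)^{m-1}\le e\,2^{-(m-1)}$, which yields exactly $\frac{en}{m}F(D)\bigl(\frac12(\log n)^{-1/\beta}\bigr)^{m-1}$.

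The step needing the most care is the tree-peeling integration. The maximality of $f(x_{i_*})$ has to be available as a constraint while integrating each non-root vertex; partitioning into $m$ regions according to the argmax handles this, and the resulting extra factor $m$ is already absorbed in the bound above.
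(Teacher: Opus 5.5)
Your proposal is correct and follows essentially the same route as the paper. Both arguments apply the Mecke formula with Markov's inequality, take a union bound over the $m^{m-2}$ spanning trees, pay a factor $m$ for the index carrying the maximal density, and root the tree there. Each non-root vertex is then integrated over a $2r_*$-ball and contributes at most $2^d\omega_d a(\log n)^{-1/\beta}$, and the argument closes with $m^{m-1}/m!\le e^m/m$ and $e\,2^d\omega_d a\le 1/2$. Your partition by argmax, with leaf-to-root integration, is simply a more explicit version of the paper's ``without loss of generality $x_*=x_1$, multiply by $m$'' step.
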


\begin{proof}
The proof is graph-theoretic, and therefore we will use the random geometric graph $G_r(\cX)$, where two vertices $x,y\in\cX$ are connected if and only if $\|x-y\|\le 2r$. Note that  $G_r(\cX)$ is connected if and only if $B_r(\cX)$ is.

Let $N_m(D)$ be the number of subsets  $\cX\subset\cP_n\cap D$ with $m$ vertices such that $G_{r_*}(\cX)$ is connected. Then
\[
N_m(D) = \sum_{\cX\in \cP_n^{(m)}}\mathbbm{1}\{G_{r_*}(\cX) \text{ is connected}\}\indf{\cX\subset D}, 
\]
and from Mecke's formula (see, e.g., \cite{penroseRandomGeometricGraphs2003}), we have
\[
\mean{N_m(D)}=\frac{n^m}{m!}
\prob{G_{r_*}(\cX')\text{ is connected}, \cX'\subset D},
\]
where $\cX'=\set{X_1,\ldots,X_m}$ is a collection of $i.i.d.$ variables with density $f$. If $G_{r_*}(\cX')$ is connected, then it must contain a subgraph isomorphic to a tree on $m$ vertices. Suppose that $\Gamma$ is a labeled tree on the vertices $\{1,...,m\}$. Without loss of generality, we assume that $x_* = x_1$, and we also take $x_1$ to be the root of $\Gamma$.
The fact that $x_*=x_1$ implies that for all $2\le i\le m$ we have $f(x_i)\le f(x_1)$. In addition, for all $2\le i\le m$ we define $\text{par}(i)$ as the parent of vertex $i$ in the tree. Suppose also that the vertices are ordered so that par$(i)<i$. If $G_{r_*}(\cX')$ contains a subgraph isomorphic to $\Gamma$, then every $X_i\in \cX'$ must be connected to $X_{\text{par}(i)}$, which implies that $X_i\in B_{2r_*}\left(X_{\text{par}(i)}\right)$. Therefore, 
    \[
    \splitb
        \prob{G_{r_*}(\cX') \text{ contains } \Gamma, \cX' \subset D} &\le m\int_{D}f(x_1)\int_{B_{2r_n(x_1)}(x_{\mathrm{par}(2)})}f(x_2)\indf{f(x_2)\le f(x_1)}\\
        &\qquad\cdots \int_{B_{2r_n(x_1)}(x_{\mathrm{par}(m)})} f(x_m)\indf{f(x_m)\le f(x_1)}dx_m\cdots dx_1,
    \splite
    \]
where we used the assumption that $r_*=r_n(x_1)$, and compensated for this assumption by multiplying the integral by $m$.
Next, using \eqref{eqn:adapt_r}, we have
    \[
\int_{B_{2r_n(x_1)}(x_{\mathrm{par}(i)})}f(x_i)\indf{f(x_i)\le f(x_1)} \le 2^d\omega_df(x_1) r_n^d(x_1)  = 2^d\omega_d an^{-1}(\log n)^{-1/\beta}.
    \]
Cayley's formula gives us that there are exactly $m^{m-2}$ labeled trees on $m$ vertices, and therefore,
\[
\splitb
\mean{N_m(D)} &\le \frac{n^m}{m!} m^{m-1} F(D) \param{2^d \omega_d a n^{-1}(\log n)^{-1/\beta}}^{m-1}\\
&\le \frac{en}{m}F(D)\param{2^d\omega_d ae (\log n)^{-1/\beta}}^{m-1},
\splite
\]
where we used the Stirling approximation.
Choosing $a$ small enough so that $2^d\omega_d a e < 1/2$, we have
\[
\mean{N_m(D)}\le \frac{en}{m} F(D)\param{\frac12(\log n)^{-1/\beta}}^{m-1}.
\]
Together with Markov's inequality, this completes the proof.
\end{proof}
The following special cases will be useful for us.
\begin{cor}\label{cor:ad_con}
Take $D=\R^d$, and $M_n = \ceil{\beta\frac{\log n}{\logg n}}$. Then
\[
        \prob{\exists \cX \in \cP_n^{(M_n)} : B_{r_*}(\cX)\text{ is connected} }\to 0.
\]
Furthermore, taking $\rmin = \inf_{\R^d}r_n(x)$, then 
\[
        \prob{\exists \cX \in \cP_n^{(M_n)} : B_{\rmin}(\cX)\text{ is connected} }\to 0.
\]
\end{cor}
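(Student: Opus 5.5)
The first claim follows by applying Lemma \ref{lem:ad_con} with $D=\R^d$, so that $F(D)=1$, and $m=M_n=\ceil{\beta\frac{\log n}{\logg n}}$. This gives
\[
\prob{\exists \cX \in \cP_n^{(M_n)} : B_{r_*}(\cX)\text{ is connected} } \le \frac{en}{M_n}\param{\tfrac12(\log n)^{-1/\beta}}^{M_n-1}.
\]
It remains to show that the right-hand side tends to zero. Taking logarithms, the bound equals
\[
1+\log n-\log M_n-(M_n-1)\param{\log 2+\tfrac1\beta\logg n}.
\]
Since $M_n\ge \beta\frac{\log n}{\logg n}$, the dominant negative term satisfies
\[
(M_n-1)\tfrac1\beta\logg n \ge \log n-\tfrac1\beta\logg n.
\]
The remaining contributions are:
\begin{itemize}
\item $-(M_n-1)\log 2 = -\Theta\param{\frac{\log n}{\logg n}}\to-\infty$;
\item $-\log M_n\to-\infty$;
\item $+\frac1\beta\logg n$.
\end{itemize}
The $\log n$ terms cancel exactly. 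What is left is
\[
1+\tfrac1\beta\logg n-\log M_n-(M_n-1)\log 2 .
\]
Here $(M_n-1)\log 2$ grows like $\frac{\log n}{\logg n}$, which dominates $\logg n$, so the log-bound tends to $-\infty$ and the probability tends to $0$. The only care needed is this bookkeeping: the exact cancellation of $\log n$ is why the factor $2$ in the base (from choosing $a$ small in Lemma \ref{lem:ad_con}) is essential, since it provides the surviving term $M_n\log 2$.

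For the second claim, I will use monotonicity. For any finite $\cX$ we have $\rmin\le r_n(x_*)=r_*$, so $B_{\rmin}(\cX)\subset B_{r_*}(\cX)$. This inclusion alone does not transfer connectivity, since a subset of a connected set need not be connected. Instead I argue with the graph: $\|x-y\|\le 2\rmin$ implies $\|x-y\|\le 2r_*$. Hence $G_{\rmin}(\cX)$ is a subgraph of $G_{r_*}(\cX)$ on the same vertex set, and connectivity of the former implies connectivity of the latter. The event in the second statement is therefore contained in the event of the first, and its probability also tends to $0$.

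The case $\rmin=0$ cannot occur in a problematic way. If $f$ is unbounded, then $\rmin=0$, and $B_0(\cX)$ with $M_n\ge2$ distinct points is a.s. not connected. If $f$ is bounded, $\rmin$ is positive and the argument above applies.
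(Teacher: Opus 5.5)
Your proof is correct and follows the paper's argument. You apply Lemma \ref{lem:ad_con} with $F(\R^d)=1$ and $m=M_n$, absorb the $\log n$ term into $(M_n-1)\beta^{-1}\logg n$ to get $-M_n\log 2+o(M_n)\to-\infty$, and then use $\rmin\le r_*$. Your graph-based justification of the second claim, $G_{\rmin}(\cX)\subseteq G_{r_*}(\cX)$, is a slightly more careful version of the paper's set-inclusion remark, and it already covers $\rmin=0$, so your final paragraph is unnecessary.
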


\begin{proof}
 Lemma \ref{lem:ad_con}, with $F(D)=1$ yields, 
\[
 P:=\prob{\exists \cX\in \cP_n^{(M_n)} : B_{r_*}(\cX)\text{ is connected}} \le \frac{en}{M_n}\param{\frac12(\log n)^{-1/\beta}}^{M_n-1}.
\]
Taking  logarithms of both sides, we have
\[
    \log P \le  1 + \log n - \frac{M_n-1}{\beta}\logg n - (M_n-1)\log 2 - \log M_n  = -M_n\log 2  + o(M_n) \to -\infty,
\]
and therefore $P\to 0$. Finally, since for any subset $\cX$ we have $\rmin \le r_*$, we have $B_{\rmin}(\cX)\subset B_{r_*}(\cX)$ and this concludes the proof.
\end{proof}

\begin{cor}\label{cor:ad_small_comp}
Let $f(x) = C_fe^{-\|x\|^q}$, and take $L_n^q=\log n - C\logg n$ for any $C>0$. Take $D_n = \R^d \bs B_{L_n}(0)$, then there exists a fixed $m_0>0$ such that 
\[
\prob{\exists \cX\in (\cP_n \cap D_n)^{(m_0)}: B_{r_*}(\cX)\text{ is connected}} \to 0.
\]
\end{cor}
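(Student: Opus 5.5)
The plan is to apply Lemma \ref{lem:ad_con} directly with $D=D_n$ and $m=m_0$. We then show that the right-hand side tends to zero, because $F(D_n)$ is very small when $L_n^q=\log n - C\logg n$. (Here $\coef=1$; general $\coef$ only rescales the constants.) Lemma \ref{lem:ad_con} gives
\[
\prob{\exists \cX\in (\cP_n\cap D_n)^{(m_0)}: B_{r_*}(\cX)\text{ is connected}}\le \frac{en}{m_0}F(D_n)\param{\tfrac12(\log n)^{-1/\beta}}^{m_0-1},
\]
so it suffices to show $nF(D_n)(\log n)^{-(m_0-1)/\beta}\to 0$ for some fixed $m_0$.

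The first step is to estimate the tail mass $F(D_n)=C_f\int_{\|x\|\ge L_n}e^{-\|x\|^q}dx$. Using polar coordinates and the substitution $u=\rho^q$, this equals
\[
C_f\, d\omega_d \int_{L_n}^\infty \rho^{d-1}e^{-\rho^q}d\rho=\frac{C_fd\omega_d}{q}\int_{L_n^q}^\infty u^{d/q-1}e^{-u}du.
\]
This is an incomplete Gamma function, so $F(D_n)\le C' L_n^{d-q}e^{-L_n^q}$ for large $n$. This bound holds for any sign of $d/q-1$: when $d/q-1\le 0$ it is immediate, and otherwise it follows from the standard asymptotics $\Gamma(s,x)\sim x^{s-1}e^{-x}$.

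Next, substitute $e^{-L_n^q}=n^{-1}(\log n)^{C}$ and $L_n^{d-q}=(\log n-C\logg n)^{d/q-1}\le C''(\log n)^{\max(d/q-1,0)}$. This gives
\[
nF(D_n)\le C'''(\log n)^{C+\max(d/q-1,0)}.
\]
Consequently the bound from Lemma \ref{lem:ad_con} is at most a constant times $(\log n)^{C+\max(d/q-1,0)-(m_0-1)/\beta}$. Choosing a fixed integer
\[
m_0>1+\beta\bigl(C+\max(d/q-1,0)\bigr)
\]
makes the exponent negative, so the probability tends to $0$.

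The main point requiring care is the tail estimate, which must be polynomial in $\log n$ rather than merely $o(1)$. This is exactly what allows a fixed $m_0$, independent of $n$, to compensate through the factor $(\log n)^{-(m_0-1)/\beta}$. A smaller point is that $a$ must be chosen small enough for Lemma \ref{lem:ad_con} to apply, and this choice does not depend on $D$. We also need $\beta$ to be the same fixed parameter as in \eqref{eqn:adapt_r}.
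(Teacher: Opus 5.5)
Your proposal is correct and follows the paper's proof essentially verbatim. Like the paper, you apply Lemma \ref{lem:ad_con} with $D=D_n$, use the incomplete Gamma function to get $nF(D_n)=O\bigl((\log n)^{C+d/q-1}\bigr)$, and choose a fixed $m_0$ large enough that the factor $(\log n)^{-(m_0-1)/\beta}$ wins. Your use of $\max(d/q-1,0)$ and the explicit threshold $m_0>1+\beta\bigl(C+\max(d/q-1,0)\bigr)$ simply make rigorous the paper's ``$\approx$'' step for the tail estimate.
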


\begin{proof}
    First, note that
\[
	F(D_n) = C_f \int_{D_n} e^{-\|x\|^q} dp =     C_1 \int_{L_n^q}^\infty t^{d/q-1}e^{-t} dt.
\]
By properties of the incomplete gamma function, since $L_n\to\infty$, we have
\[
  F(D_n)\approx C_2 L_n^{d-q}e^{-L_n^q} = n^{-1}(\log n)^{d/q-1+C}.
\]
Therefore, from Lemma \ref{lem:ad_con}
\[
\prob{\exists \cX\in (\cP_n \cap D_n)^{(m_0)}: B_{r_*}(\cX)\text{ is connected}}\le \frac{e}{m_0} (\log n)^{d/q-1+C} \param{\frac12(\log n)^{-1/\beta}}^{m_0-1},
\]
and for $m_0$ large enough (but fixed), the last term goes to zero.
\end{proof}
\subsection{Adaptive coverage}
Let $x\in \R^d$, and define $R_n(x)$ via
\eqb\label{eqn:adapt_R}
    n R_n^d(x) := \frac{A\log n}{f(x)},
\eqe
for some $A>0$, which will be set later. If $f(x)= 0$ we set $R_n(x) = \infty$.

The next lemma is about ``adaptive coverage", which means that every point in $\R^d$ is covered by a ball of radius $R_n(x)$ around the point process $\cP_n$. To this end, we define the event
\[
\cA_n := \set{\forall x\in \R^d : \dist(x,\cP_n) \le R_n(x)}.
\]
\begin{lem}\label{lem:ad_cov}
Let $f(x) = C_fe^{-\|x\|^q}$. Then there exists a value of $A$, such that
\[
	\limninf\prob{\cA_n} = 1.
\]
\end{lem}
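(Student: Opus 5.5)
We argue by discretization and a union bound, using the fact that $R_n(x)$ grows as $f(x)$ decreases. Write $f(x)=C_f e^{-\|x\|^q}$ (absorbing $\coef$ by rescaling). The event $\cA_n$ fails if some $x$ has $\dist(x,\cP_n)>R_n(x)$. It is convenient to replace the pointwise condition by a condition on a finite family of sets. We partition $\R^d$ into a bounded ``bulk'' region $B_{\rho_n}(0)$ and its complement. Here $\rho_n$ is chosen so that $nF(\R^d\bs B_{\rho_n}(0))$ is of order $\log n$, or so that $R_n(x)\ge \|x\|$ there.

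\textbf{Tail region.} For $\|x\|\ge \rho_n$ we use that $R_n(x)$ is huge. Since $nR_n^d(x)=A\log n\, e^{\|x\|^q}/C_f$, once $\|x\|^q$ exceeds roughly $\log n$ we have $R_n(x)\ge 2\|x\|$, say. Then it suffices that $\cP_n$ has at least one point in a fixed bounded ball $B_1(0)$, and the distance from $x$ to that point is at most $\|x\|+1\le R_n(x)$. The probability that $B_1(0)$ is empty is $e^{-nF(B_1(0))}\to 0$. More carefully, we pick $\rho_n$ with $\rho_n^q=\log n$. For $\|x\|\ge\rho_n$ this gives $R_n^d(x)\ge A\log n/C_f\ge (\|x\|+1)^d$ for large $n$ and $A\ge 1$, which is enough because $\|x\|\ge \rho_n\to\infty$ only polylogarithmically.

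\textbf{Bulk region.} On $B_{\rho_n}(0)$ we cover by a grid of small cubes, using a local scale. Around a point $y$, $f$ is comparable on the ball $B_{R_n(y)}(y)$: its log-gradient is of order $\|y\|^{q-1}\le (\log n)^{(q-1)/q}$, while $R_n(y)\le (A\log n\, e^{\rho_n^q}/(C_f n))^{1/d}$. For $\|y\|\le\rho_n$ this is at most $(A\log n/C_f)^{1/d}$, so the product is $O((\log n)^{1/d+1-1/q})$. That is not obviously small, and this is the main obstacle.

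To avoid it, we split the bulk further at $\|y\|^q\le \log n-C\logg n$ (the $L_n$ of Corollary \ref{cor:ad_small_comp}). In that inner part, $R_n(y)\le ((\log n)^{1+C}/n\cdot\text{const})^{1/d}$ decays polynomially, so $f$ varies by a factor $1+o(1)$ over $B_{R_n(y)}(y)$. In the annulus $L_n\le\|y\|\le\rho_n$, we instead use the crude comparison $f\ge f(\text{outer radius})$ and handle it like the tail, taking $A$ large. In the annulus $R_n$ is at least a constant times $(\log n)^{1/d}$ times a power of $\log n$ coming from the $C\logg n$ gap.

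\textbf{Union bound.} In the inner region, we place points $y_j$ of a grid with local spacing $\tfrac12 R_n(y_j)$. If every ball $B_{R_n(y_j)/2}(y_j)$ contains a point of $\cP_n$, then for every $x$ in the corresponding cell, $\dist(x,\cP_n)\le R_n(y_j)\le (1+o(1))R_n(x)$; we adjust by a constant factor in $A$ to absorb the $1+o(1)$. Each such ball is empty with probability $\exp(-nF(B_{R_n(y_j)/2}(y_j)))\le \exp(-c\,\omega_d 2^{-d} A\log n)=n^{-cA'}$. The number of cells is at most the integral of $R_n(y)^{-d}$ over the region, i.e.\ at most $n\int f/(A\log n)\le n$. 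Hence the failure probability is at most $n^{1-cA'}$, which tends to $0$ once $A$ is large. Combining the three regions gives $\prob{\cA_n}\to1$.
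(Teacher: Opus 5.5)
\textbf{The gap is in the annulus $L_n\le\|x\|\le\rho_n$}, which is the obstacle you flagged yourself. Neither tool you propose for it works.

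\emph{Comparison to the outer radius.} Comparing $f$ with its value at the outer edge of the annulus certifies only
$nF\bigl(B_{R_n(y)/2}(y)\bigr)\gtrsim A\log n\cdot f(\rho_n)/f(y)$. At $\|y\|=L_n$ this is of order $A(\log n)^{1-C}$. That does not grow like $\log n$ for any fixed $A$; you would need $A\gtrsim(\log n)^{C}$.

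\emph{Treating the annulus like the tail.} This is impossible. At $\|y\|=L_n$ one has $R_n(y)^d\asymp(\log n)^{1-C}$ while $\|y\|^d\asymp(\log n)^{d/q}$, so a point near the origin is far outside $B_{R_n(y)}(y)$.

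\emph{The grid.} Near $\|y\|^q=\log n$, your own estimate $\|y\|^{q-1}R_n(y)\asymp(\log n)^{1/d+1-1/q}\to\infty$ (e.g.\ for $q\ge1$) shows that $f$ is not comparable across a cell of size $R_n(y)$. So your grid of spacing $\tfrac12R_n(y_j)$ cannot be transferred to nearby $x$ there.

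\textbf{The missing idea} is a one-sided comparison based on radial monotonicity. If $R_n(x)\le2\|x\|$, the ball $B_{R_n(x)/2}\bigl(x-\tfrac{R_n(x)}{2}\tfrac{x}{\|x\|}\bigr)$ lies in $B_{R_n(x)}(x)\cap B_{\|x\|}(0)$. Hence $f\ge f(x)$ on it, and its $nF$-mass is at least $\omega_d2^{-d}A\log n$, with no need for $f$ to be nearly constant on the ball.

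You then take a net of spacing $R_n(0)/\logg n$ of the set $\{R_n(x)\le2\|x\|\}\subset B_{2(\log n)^{1/q}}(0)$. This spacing is smaller than every $R_n(x)$ by a factor $\logg n$. The net has $O(n\,\mathrm{polylog}\,n)$ points, and $\bigl|\|x\|^q-\|s_x\|^q\bigr|\to0$ uniformly. Running the union bound with a slightly smaller $A'<A$ absorbs the discretization error.

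The complementary set $\{R_n(x)>2\|x\|\}$, both near the origin and far out, is handled at once by a single point of $\cP_n$ in $B_{cR_n(0)}(0)$ with $c<1/2$. This is the paper's route, and it makes your three-region split unnecessary.

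\textbf{Two further computations are off}, though both can be repaired.

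\emph{Tail threshold.} With $\rho_n^q=\log n$ you get $R_n^d(x)=A\log n/C_f$ at $\|x\|=\rho_n$. This is much smaller than $(\|x\|+1)^d\asymp(\log n)^{d/q}$ whenever $q<d$ (e.g.\ a Gaussian in $d\ge3$). The tail has to start at $\rho_n^q=\log n+C'\logg n$ with $C'>d/q-1$, or be defined as the unbounded part of $\{R_n(x)>2\|x\|\}$. Either choice widens the problematic annulus further.

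\emph{Inner region.} There $R_n^d(y)\le (A/C_f)(\log n)^{1-C}$, which decays only polylogarithmically, not polynomially. The claimed $1+o(1)$ comparability of $f$ on $B_{R_n(y)}(y)$ therefore holds only if $C>1+d(q-1)/q$ for $q\ge1$ (and $C>1$ for $q<1$). So $C$ must be taken large, not arbitrary.
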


\begin{proof}
We split the proof into two cases.

\noindent {\underline{Case I:}} Let $x\in \R^d$ be such that $R_n(x) \le 2\|x\|$. Define the ball
\[
B_n(x) := B_{R_n(x)/2}\param{x- \frac{R_n(x)}{2}\cdot\frac{x}{\|x\|}} .
\]
Note that for all $z\in B_n(x)$, we have $\|z\| \le \|x\|$, which implies that $f(z) \ge f(x)$.
Also, we have $B_n(x) \subset B_{R_n(x)}(x)$. Therefore, using \eqref{eqn:adapt_R} we have
\[
F(B_n(x)) \ge \omega_d (R_n(x)/2)^d f(x) = \omega_d 2^{-d} A n^{-1} \log n,
\]
and
\[
\prob{B_n(x)\cap \cP_n = \emptyset} = e^{-nF(B_n(x))} \le n^{-c A},
\]
for $c = \omega_d 2^{-d}$.

Next, note that $R_n(x)\le 2\|x\|$, and using \eqref{eqn:adapt_R},
\[
	  2^{-d} A\frac{\log n}{n f(x)} = \frac{A}{2^dC_f}\frac{\log n}{n} e^{\|x\|^q} \le \|x\|^d ,
\]
and this implies that $\|x\| \le 2 (\log n)^{1/q}$.
Let $S_n$ be a $\delta_n$-net of the set $\cR_n :=\set{x\in \R^d : R_n(x)\le 2\|x\|}$,
with $\delta_n = \frac{R_n(0)}{\logg n}$. Then,
\[
	|S_n| \le C_1 \param{\frac{(\log n)^{1/q}\logg n}{R_n(0)}}^{d} = O\param{ n (\log n)^{d/q-1} (\logg n)^d},
\]
and 
\[
	\prob{\exists s\in S_n : B_{R_n(s)}(s) \cap \cP_n = \emptyset} \le C_2 n^{1-cA} (\log n)^{d/q-1} (\logg n)^d.
\]
Taking $A$ sufficiently large, the last probability goes to zero, indicating that all points in the net $S_n$ are adaptively covered. 
It remains to show that all other points in $\cR_n$
are also adaptively covered.

Define $R_n'(x) = (A'/A)^{1/d}R_n(x)$, where $A' < A$ is chosen so that we still have
\[
\prob{\exists s\in S_n : B_{R_n'(s)}(s)\cap \cP_n =\emptyset} \to 0.
\]
Therefore, for every point $x\in \cR_n$ we can write
\[
\dist(x, \cP_n) \le \|x-s_x\| + \dist(s_x,\cP_n)\le \|x-s_x\| + R_n'(s_x),
\]
where $s_x$ is the nearest neighbor of $x$ in the net $S_n$, and thus  $\|x-s_x\| \le \delta_n$. 

If $0<q\le 1$, this immediately implies that 
\[
\abs{\|x\|^q-\|s_x\|^q} \le \delta_n^q\to 0.
\]
If $q>1$, we can use the inequality $|a^q-b^q| \le q \max(a,b)^{q-1}|a-b|$, and since $\|x\|,\|s_x\| \le 2(\log n)^{1/q}$, we have
\[
\abs{\|x\|^q-\|s_x\|^q} \le q 2^{q-1}(\log n)^{1-1/q}\delta_n \to 0.
\]
In other words, for all $q$ we can find $\eta_n\to 0$ such that $\abs{\|x\|^q-\|s_x\|^q}\le \eta_n$.
Therefore, 
\[
\frac{R_n'(s_x)}{R_n'(x)} = \param{\frac{f(x)}{f(s_x)}}^{1/d} = e^{(\|s_x\|^q-\|x\|^q)/d} \le e^{\eta_n/d}.
\]
Thus, we have
\[
\dist(x,\cP_n) \le \delta_n + (1+\eta_n)R_n'(x),
\]
and for large enough $n$ the last term is smaller than $R_n(x)$ for all $x$. 
Thus, we showed that $\dist(x,\cP_n)\le R_n(x)$ for all $x$ with $R_n(x) \le 2\|x\|$.

\noindent {\underline{Case II:}}
Let $x\in \R^d$ such that $R_n(x) > 2\|x\|$.
Fix $c>0$, and note that for large enough $n$,
\[
n F(B_{cR_n(0)}(0)) \ge f(1)\omega_d c^d n R_n(0)^d = e^{-1}\omega_d c^d A \log n,
\]
where we used the fact that $R_n(0)\to 0$.
Therefore $\prob{B_{cR_n(0)}(0)\cap\cP_n = \emptyset} \to 0$, and so with high probability we can assume there exists a point $p\in B_{cR_n(0)}(0)\cap \cP_n$. 

Since $R_n(x)\ge R_n(0)$, we have
\[
\|p\| \le cR_n(0) \le cR_n(x),
\]
and therefore for every $x$ with $R_n(x)>2\|x\|$, we have
\[
\|x-p\| \le \|x\| + \|p\| \le (1/2+c)R_n(x).
\]  
Taking $c<1/2$ we have $\|x-p\| \le R_n(x)$ implying that $\dist(x,\cP_n) \le R_n(x)$.
This completes the proof.
\end{proof}

\begin{cor}\label{cor:ad_cov}
Let $f(x) = C_f e^{-\|x\|^q}$, and define $R_n(L) := R_n(L \be_1)$, $\be_1 = (1,0,\ldots,0)$. Then 
    \[
        \limninf\prob{\forall L>0: B_L(0) \subset B_{R_n(L)}(\cP_n)}= 1.
    \]
\end{cor}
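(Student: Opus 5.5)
The plan is to deduce Corollary \ref{cor:ad_cov} directly from Lemma \ref{lem:ad_cov}, using the radial monotonicity of $f$. On the event $\cA_n$, which by Lemma \ref{lem:ad_cov} has probability tending to one for a suitable $A$, every $x\in\R^d$ satisfies $\dist(x,\cP_n)\le R_n(x)$. So it suffices to show the deterministic statement: for every $L>0$ and every $x\in B_L(0)$, we have $R_n(x)\le R_n(L)$. Then $\dist(x,\cP_n)\le R_n(L)$, i.e.\ $x\in B_{R_n(L)}(\cP_n)$. Since the radii are closed ($\dist\le R$), I will interpret $B_r(\cP_n)$ as the closed union of balls, or equivalently replace $R_n$ by a slightly larger radius.

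The monotonicity step works as follows. Since $f(x)=C_f e^{-\|x\|^q}$ is radially decreasing, $\|x\|\le L$ gives $f(x)\ge f(L\be_1)$. By \eqref{eqn:adapt_R}, $R_n(x)^d = A\log n/(n f(x))$ is decreasing in $f(x)$, so $R_n(x)\le R_n(L\be_1)=R_n(L)$. This holds simultaneously for all $L>0$ and all $x\in B_L(0)$, deterministically. Hence
\[
\cA_n\subseteq\set{\forall L>0:\ B_L(0)\subset B_{R_n(L)}(\cP_n)},
\]
and taking probabilities together with Lemma \ref{lem:ad_cov} yields the claim.

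The only subtlety I expect is the open/closed ball boundary issue when $\dist(x,\cP_n)=R_n(x)$ exactly. If $B_r$ denotes open balls, I would handle it by running Lemma \ref{lem:ad_cov} with a slightly smaller constant $A''<A$ that still works, since its proof only needs $A$ large. That gives strict inequality $\dist(x,\cP_n)\le R''_n(x)<R_n(x)$. Alternatively, I would note that $\cP_n$ is a.s.\ finite on bounded sets, so the closed union contains the point anyway. The scaling constant $c$ in $f$ plays no role beyond monotonicity, since the paper sets $c=1$ in this lemma.
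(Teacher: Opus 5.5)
Your proposal is correct and matches the paper's proof: on the event $\cA_n$ from Lemma~\ref{lem:ad_cov}, radial monotonicity of $f$ gives $R_n(x)\le R_n(L)$ for all $x\in B_L(0)$, hence $\dist(x,\cP_n)\le R_n(L)$ simultaneously for all $L$. Your remark on open versus closed balls is a harmless extra that the paper does not address.
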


\begin{proof}
For every $L>0$ and $x\in B_L(0)$, we have $f(x)\ge f(L)$, and therefore $R_n(x)\le R_n(L)$. From Lemma \ref{lem:ad_cov}, we can  conclude that $\dist(x,\cP_n) \le R_n(L)$, which means that $x\in B_{R(L)}(\cP_n)$, completing the proof.
\end{proof}

We are now ready to prove Lemma \ref{lem:upper}. The main idea in the proof is to show that there are no cycles of persistence greater than $\pi_U$,  merely because such cycles require very large components to exist in a very sparse regime.
We split the proof between the two classes $\cD_1$ and $\cD_2$.

\subsection{Proving Lemma \ref{lem:upper}, class $\cD_1$}

\begin{proof}[Proof  of Lemma \ref{lem:upper} -- the $\cD_1$ class]
Recall that class $\cD_1$ contains  bounded probability density functions, whose support is a compact subset of $\mathbb{R}^d$, and are bounded away from zero (on their support).
Suppose there exists $\gamma\in \PH_{k,n}$ such that $\pi(\gamma)>\pi_U$. Let $M_n$ be the smallest number of points needed to generate $\gamma$. By Proposition \ref{prop:optimal}, we have
\[
M_n \ge (1-\delta)\pi_U^k \coverconst s_k = (1+\eps') \frac{k}{d-k}\frac{\log n}{\logg n},
\]
where we used the fact that $\pi_U^k = (1+\epsilon)^k (\Pi_k^*)^k\frac{\log n}{\log\log n}$, and where $\eps' = (1-\delta)(1+\eps)^k -1$, assuming $\delta$ is sufficiently small.
Set $\beta=(1+\epsilon')\frac{k}{d-k}$, then from Corollary \ref{cor:ad_con}, if we take $nr^d = a\fmax^{-1}(\log n)^{-1/\beta}$, then $B_r(\cP_n)$ has no connected components of size larger than $\ceil{\beta\frac{\log n}{\log\log n}}$ (with high probability). By the Nerve Lemma, this implies that  if  $\birth(\gamma) =r$ then $nr^d > a\fmax^{-1}(\log n)^{-1/\beta}$.
Furthermore, condition \eqref{eqn:lb_vol} implies (cf.~\cite{cuevas2004boundary}) that with high probability $\supp(f)\subset B_R(\cP_n)$ for  $nR^d>C\log n$, for some constant $C>0$. Therefore, with high probability, for any persistent $k$-cycle $\gamma\in\PH_{k,n}$, if $\death(\gamma) =R$, then necessarily  $nR^d\leq C\log n$. 

Thus, with high probability, for any such $\gamma$, we have
\[
\pi(\gamma) = \frac{R}r=\left(\frac{nR^d}{nr^d}\right)^{1/d} \leq \left(\frac{C\log n}{a\fmax^{-1}(\log n)^{-1/\beta}}\right)^{1/d}= O\param{ \left((\log n)^{1+1/\beta}\right)^{1/d}}.
\]
Since we took $\beta = (1+\eps')\frac{k}{d-k}$, we have $1+1/\beta < \frac{d}{k}$, and therefore for all $\gamma\in\PH_{k,n}$ we have
$
\pi(\gamma)< \pi_U$ (for large enough $n$),
which leads to a contradiction.
This concludes the proof.
\end{proof}

\subsection{Proving Lemma \ref{lem:upper}, class $\cD_2$}

\begin{proof}[Proof of Lemma \ref{lem:upper}]
Recall that here $f(x) =C_fe^{-c\|x\|^q}$. Note that since $(\death/\birth)$ is scale invariant, we can assume without loss of generality that $c=1$.

Suppose that there exists a persistent $k$-cycle, with $\birth(\gamma)=r_\gamma$, and with $\pi(\gamma) \geq \pi_U$. Let $\cP_\gamma$ represent the set of points in $\cP_n$ that forms a generator of $\gamma$ at radius $r_\gamma$. Similarly to the $\cD_1$ case, we can show that $|\cP_\gamma| \ge M_n = \ceil{\beta\frac{\log n}{\logg n}}$.
From Corollary \ref{cor:ad_con} we therefore conclude that $r_\gamma > r_n(0)$.

Next, note that $f(L)$ is decreasing in $L$, therefore we can find a value of $L_\gamma$, such that
\[
    f(L_\gamma)nr_\gamma^d = a(\log n)^{-1/\beta},
\]
or in other words, a value of $L_\gamma$ such that $r_n(L_\gamma) = r_\gamma$. Also define $D_\gamma = \R^d\bs B_{L_\gamma}(0)$, and recall our definition of $L_n^q = \log n - C\logg n$. Then there are two cases to be checked.

{\underline{Case I:}} Suppose that $L_\gamma <L_n$. In this case, from Corollary \ref{cor:ad_con}, we have that $B_{r_\gamma}(\cP_\gamma \cap D_\gamma)$ has no connected subset of size $M_n$.
Therefore, we conclude that $\cP_\gamma \cap B_{L_\gamma}(0) \ne \emptyset$, and this implies that $B_{r_\gamma}(\cP_\gamma) \subset B_{L_\gamma + 2M_n r_\gamma}(0)$. Since $L_\gamma < L_n$,  we have
\[
r_\gamma^d = \frac{a(\log n)^{-1/\beta}e^{L_\gamma^q}}{nC_f} \le C_1 (\log n)^{-C-1/\beta},
\]
where $C_1 = a/C_f$, 
and
\[
2M_nr_\gamma\le C_2 \frac{(\log n)^{1-C/d-1/d\beta}}{\logg n},
\]
where $C_2 = 2\beta C_1^{1/d}$,
independently of $\gamma$. In particular, if $C>d$ we have that $2M_nr_\gamma \to 0$ uniformly over $\gamma$.
Next, we show that
\eqb\label{eqn:bound_diff}
(L_\gamma+2 M_n r_\gamma)^q - L_\gamma^q  \le \eps_n,
\eqe
for some $\eps_n\to 0$,
which implies that 
\eqb\label{eqn:dens_ratio}
\frac{f(L_\gamma)}{f(L_\gamma+2M_n r_\gamma)}\le 1+\eps'_n,
\eqe
for $\eps'_n\to 0$, where both $\eps_n,\eps_n'$ are independent of $\gamma$.

For $q\ge 1$, note that $L_\gamma \le (\log n)^{1/q}$, and therefore,
\[
(L_\gamma+2M_n r_\gamma)^q-L_\gamma^q \le 2q (L_\gamma+2M_nr_\gamma)^{q-1} M_n r_\gamma \le C_2 (\log n)^{(q-1)/q+1-C/d-1/d\beta} 
\le C_2(\log n)^{-{1/q}},
\]
if we assume that $C>2d$.
If $q \in (0,1)$, then we use the fact that $(a+b)^q \le a^q+b^q$, and then,
\[
(L_\gamma+2M_n r_\gamma)^q-L_\gamma^q \le (2 M_n r_\gamma)^q \le C_2^q \param{\frac{(\log n)^{1-C/d-1/d\beta}}{\logg n}}^q \le C_2 (\log n)^{-q}.
\]
Therefore, in \eqref{eqn:bound_diff} we can take $\eps_n = (\log n)^{-1/q}$ for $q\ge 1$ and $\eps_n = (\log n)^{-q}$ otherwise.

 Using Corollary \ref{cor:ad_cov}, we have 
\[
    B_{L_\gamma+2M_n r_\gamma}(0) \subset B_{R_n(L_\gamma+2M_n r_\gamma)}(\cP_n),
\]
implying that $\death(\gamma) \le R_n(L_\gamma + 2M_n r_\gamma)$. Therefore,
\[
    \pi(\gamma) \le \frac{R_n(L_\gamma+2 M_n r_\gamma)}{r_n(L_\gamma)} = \param{\frac{A\log n f(L_\gamma)}{a(\log n)^{-1/\beta}f(L_\gamma+2 M_n r_\gamma)}}^{1/d} \le C_3 (\log n)^{(1+1/\beta)/d}=  o(\pi_U),
\]
where we used \eqref{eqn:dens_ratio}, and this is a contradiction.

{\underline{Case II:}} Suppose $L_\gamma\ge L_n$. In this case $D_\gamma \subset D_n$. 
From Corollary \ref{cor:ad_small_comp}, we know that any component of $B_{r_\gamma}(\cP_\gamma\cap D_\gamma)$ consists of at most $m_0$ points (for some fixed $m_0>0$). Since $|\cP_\gamma| \ge M_n$, we conclude that $\cP_\gamma \cap B_{L_\gamma}(0) \ne \emptyset$, which again implies that
$B_{r_\gamma}(\cP_\gamma) \subset B_{L_\gamma+2m_0 r_\gamma}(0)$.
Note that from Corollary \ref{cor:ad_cov} we have $
    B_{L_\gamma}(0) \subset B_{R_n(L_\gamma)}(\cP_n)$, 
and therefore,
\[
    B_{L_\gamma+2m_0r_\gamma}(0) \subset B_{R_n(L_\gamma)+2m_0r_\gamma}(\cP_n).
\]
This implies that $\death(\gamma) \le R_n(L_\gamma)+2m_0 r_\gamma$,
and therefore
\[
\pi(\gamma) \le \frac{R_n(L_\gamma)}{r_n(L_\gamma)} + 2m_0 \le \const (\log n)^{(1+1/\beta)/d} = o(\pi_U),
\]
completing the proof.
\end{proof}

\section{Optimal Cycles -- Proofs}\label{sec:optimal}

As this section does not use any probability but we use a number of different tools, we  introduce the required notion and background to make this section self-contained. Throughout this section, we assume points are in general position. That is, no $d+1$ points lie on an affine hyperplane and no $d+2$ points lie on a $(d-1)$-sphere. Furthermore, in this section only, we assume that $B_r(x)$ represent open balls, while closed balls are denoted $\overline{B}_r(x)$. 

We also note that to ease exposition, and without loss of generality, we can fix the death time to be 1, and the center of the circumsphere corresponding to the \v Cech simplex which bounds the cycle is at the origin. Equivalently, this is the critical point of the distance function when the cycle is bounded.

 We next introduce the notion of covering $k$-density of a set $A\subset\R^d$, $\covnum_r(A)r^k,$ and remind the reader that this is known to converge to a limit as $r\to0$ and the limit only depends on the $k$-volume of the set Theorem~\ref{thm:covering}. To relate it to persistence, we will use isoperimetric inequalities relating the filling radius and volume. 
The filling radius is defined as 
\[\FillRad (T)
=
\inf\left\{\rho>0: \exists W \;\text{s.t.}\; \spt(W)\subset B_\rho(T) \;\text{and}\; \partial W=T\right\}.\]
For a current $T$, we write $B_\rho(T):=B_\rho(\spt(T))$.

We refer the reader to notes by Guth~\cite{guthNotesGromovsSystolic2006} and the original result by Simon and Bombieri ~\cite{bombieriSimonGehring1983}. However, for our application, we will use a generalization by Gromov~\cite{gromovFillingRiemannianManifolds1983}.
\begin{thm}[\cite{gromovFillingRiemannianManifolds1983}, Theorem 8.1.C] \label{thm:sharp-fillrad-pseudomanifold}The filling radius of a closed $k$-dimensional 
 pseudomanifold  $A\subset \R^d$ admits the following (sharp) upper bound
 \[
\FillRad(A) \leq \left(\frac{\Vol_k(A)}{s_k}\right)^{1/k}\]
with equality for round $k$-spheres and is independent of the ambient dimension $d$.
\end{thm}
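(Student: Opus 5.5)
The plan is to derive the bound by combining two inequalities for a mass-minimizing filling: the monotonicity formula gives a lower bound on its mass, and Almgren's sharp isoperimetric inequality gives an upper bound. Let $A$ be the closed $k$-pseudomanifold. Since $A$ carries a $\mathbb{Z}_2$ fundamental class, it defines a $k$-dimensional mod-$2$ flat cycle $T=\geomreal{A}$ with $\partial T=0$ and mass $\mathbf M(T)=\Vol_k(A)$. By the Federer--Fleming compactness theorem for flat chains mod $2$, there is a mass-minimizing $(k+1)$-chain $W$ with $\partial W=T$. I will show directly that
\[
\spt(W)\subset \overline{B}_{\rho_0}(T),\qquad \rho_0:=\left(\frac{\mathbf M(T)}{s_k}\right)^{1/k}.
\]
Taking $\rho$ slightly larger than $\rho_0$ in the definition of $\FillRad$ then gives the stated bound.

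\textbf{Step 1 (lower bound via monotonicity).} Let $x\in\spt(W)$ and set $\rho:=\dist(x,\spt T)$. The open ball $B_\rho(x)$ misses $\spt T$, so inside this ball $W$ is a mass-minimizing chain without boundary. Its associated varifold is therefore stationary in $B_\rho(x)$. The monotonicity formula says that $s\mapsto \mathbf M(W\llcorner B_s(x))/(\omega_{k+1}s^{k+1})$ is nondecreasing for $s<\rho$. Its limit as $s\to0$ is the density at $x$, which is at least $1$ because $x$ lies in the support, after using upper semicontinuity of density for stationary varifolds. Hence
\[
\mathbf M(W)\ \ge\ \omega_{k+1}\rho^{k+1}.
\]
If one prefers to avoid varifold language, the same conclusion follows from slicing. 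Put $g(s)=\mathbf M(W\llcorner B_s(x))$. Comparing $W\llcorner B_s(x)$ with the cone over its slice $\langle W,|\cdot-x|,s\rangle$ gives $g(s)\le \frac{s}{k+1}g'(s)$, and integrating this differential inequality yields the same bound.

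\textbf{Step 2 (upper bound via sharp isoperimetry).} Almgren's sharp isoperimetric inequality states that a $k$-cycle $T$ in $\R^d$ admits a filling with
\[
\mathbf M(W)\le \omega_{k+1}\left(\frac{\mathbf M(T)}{s_k}\right)^{(k+1)/k},
\]
using $s_k=(k+1)\omega_{k+1}$. Equality holds exactly for a round $k$-sphere bounding a flat disk. The bound is independent of $d$. Since $W$ is minimizing, it obeys this bound.

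\textbf{Step 3 (combine).} Steps 1 and 2 together give $\omega_{k+1}\rho^{k+1}\le \omega_{k+1}\rho_0^{k+1}$, so $\rho\le\rho_0$ for every $x\in\spt W$. For a round sphere of radius $\rho_0$, the flat disk attains $\rho=\rho_0$ at the center. Thus the filling radius in this extrinsic sense equals $\rho_0$, which is the equality case.

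\textbf{Main obstacle.} The difficulty is Step 2 in the mod-$2$ setting. Almgren's theorem is classically stated for integral currents, so I would need either a mod-$2$ version of it or a reduction to that case. To get one, I would try to rerun Almgren's argument: take a minimizer of an isoperimetric ratio, then show via the first variation that it has constant mean curvature. Alternatively, for codimension-one-type situations I would use the known mod-$\nu$ extensions. A secondary technical point is the density lower bound at every support point of a mod-$2$ minimizer. For this I would appeal to the structure theory of mod-$2$ minimizing chains, namely Federer's regularity results for flat chains modulo $2$.
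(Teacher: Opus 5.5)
The paper gives no argument of its own here: it quotes Gromov's Theorem 8.1.C, which is stated for submanifolds, and asserts that the proof carries over verbatim to pseudomanifolds. Your reduction is logically sound: take a minimal filling $W$, bound $\mathbf{M}(W)\ge\omega_{k+1}\rho^{k+1}$ from below by interior monotonicity, and bound it from above by a sharp isoperimetric inequality. The arithmetic in Step 3 is also right. As written, however, it only proves the statement for pseudomanifolds that carry an \emph{integral} fundamental cycle.

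The statement concerns closed pseudomanifolds with a $\mathbb{Z}_2$ fundamental class, which may be non-orientable. That is exactly how the paper uses it: Lemma \ref{lem:pseudomanifold-parametrization} produces an arbitrary, possibly non-orientable $M$, and Corollary \ref{cor:sharp-fillrad-current} works over $\mathbb{Z}_2$. For example, an embedded $\R P^2\subset\R^4$ is a mod-$2$ cycle with no integral lift. Step 2 therefore needs the sharp isoperimetric inequality for mod-$2$ flat chains in arbitrary codimension. Almgren's theorem is a statement about integral currents, as you note yourself. ``Rerun Almgren's argument'' points at a long and delicate proof; it is not a step. So the central inequality is missing in precisely the case that matters.

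You do not need any global isoperimetric inequality: the sharp constant already comes from monotonicity with the boundary term.
\begin{enumerate}
\item For a vector field $X$ with flow $\phi_t$, compare the minimizer $W$ with $\phi_{t\#}W$ plus the trace of $T$ swept by $\phi_s$, $0\le s\le t$. This competitor has boundary $T$ mod $2$, and minimality gives the first-variation bound $\bigl|\int \mathrm{div}_W X\,d\|W\|\bigr|\le\int|X|\,d\|T\|$.
\item Fix $x\in\spt W$, set $\rho=\dist(x,\spt T)>0$, take $0<\sigma\le\rho$, and use $X(y)=(y-x)\max(|y-x|,\sigma)^{-(k+1)}$.
\item With $r=|y-x|$, a direct computation gives $\mathrm{div}_W X=(k+1)\sigma^{-(k+1)}$ on $B_\sigma(x)$. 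For $r>\sigma$ it gives $\mathrm{div}_W X=(k+1)r^{-(k+1)}\bigl(1-|\nabla^W r|^2\bigr)\ge 0$. On $\spt T$ we have $|X|\le\rho^{-k}$.
\item Hence $(k+1)\sigma^{-(k+1)}\|W\|(B_\sigma(x))\le\rho^{-k}\mathbf{M}(T)$. Letting $\sigma\to0$ at a point of density one gives $s_k\rho^k\le\mathbf{M}(T)$, with equality for the flat disk.
\item Density-one points are dense in $\spt W$ and $\dist(\cdot,\spt T)$ is continuous, so $\spt W\subset\overline{B}_{\rho_0}(\spt T)$.
\end{enumerate}
This works verbatim mod $2$. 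It also removes your secondary worry about density at every support point.

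Separately, your equality claim is incomplete. Showing that the flat disk reaches distance $\rho_0$ says nothing about other fillings; you need that every filling does. One way is to project orthogonally onto the $(k+1)$-plane of the sphere. The image of any filling is a compactly supported $(k+1)$-chain in $\R^{k+1}$ bounded by the sphere, so by constancy it is the disk. Hence every filling meets the orthogonal $(d-k-1)$-plane through the center, and every point of that plane lies at distance $\ge\rho_0$ from the sphere.
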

While the cited result is stated for submanifolds,  the proof holds verbatim for pseudomanifolds.  We remark that the sharp result is only known in Euclidean space. To apply the result, we first show that a \v  Cech cycle may be parameterized by a pseudomanifold which is sufficient for our application. We remind the reader that all cycles are taken over $\mathbb{Z}_2$.
Throughout this section, we use the notation $\geomreal{z}$ for the geometric realization of a chain $z$.

\begin{lem}
\label{lem:pseudomanifold-parametrization}
Let $P\subset \mathbb{R}^{d}$ be a locally finite set in affine
general position, and let $z$ be a  compactly supported simplicial $k$-cycle with $k<d$. Then there exists
 a finite, possibly disconnected, closed $k$-dimensional simplicial pseudomanifold $M$, and a simplex-wise affine map
$\varphi: M\rightarrow \mathbb{R}^{d},$
such that $\varphi[M]$ is equal to $\geomreal{z}$, the geometric realization of $z$. Here $[M]$ denotes the fundamental cycle of $M$.
Additionally, the
pseudomanifold parameterization introduces no additional
$k$-dimensional measure, i.e.
\[
    \Vol_{k}(\varphi):= \int_M J_k \varphi\,d\mathcal{H}^{k}
    =
    \mathcal{H}^{k}\bigl(\varphi(M)\bigr),
\]
\end{lem}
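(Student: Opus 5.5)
The plan is to build $M$ by the standard ``pseudomanifold resolution'' of a $\mathbb{Z}_2$-cycle: take one abstract copy of each $k$-simplex in the support of $z$, and then glue these copies along $(k-1)$-faces in a way that makes every $(k-1)$-face of $M$ lie in exactly two $k$-simplices.

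Write $z=\sum_{i=1}^m \sigma_i$, with the $\sigma_i$ distinct $k$-simplices of the complex on $P$. Since $z$ is compactly supported and $P$ is locally finite, this sum is finite. Let $\widetilde M_0=\bigsqcup_i \Delta_i$, where $\Delta_i$ is a standard $k$-simplex with its vertices labelled by those of $\sigma_i$. For each $(k-1)$-simplex $\tau$ of the complex, let $S_\tau$ be the set of $i$ with $\tau\subset\sigma_i$. The condition $\partial z=0$ over $\mathbb{Z}_2$ says exactly that $|S_\tau|$ is even. Choose any perfect matching of $S_\tau$, and for each matched pair $\{i,j\}$ identify the corresponding facets of $\Delta_i$ and $\Delta_j$ by the vertex-label-preserving affine map. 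The quotient is $M$.

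We then check the pseudomanifold properties of $M$:
\begin{itemize}
\item It is pure $k$-dimensional, since it is a union of $k$-simplices.
\item Every $(k-1)$-face is shared by exactly two $k$-simplices, by construction of the matching.
\item Strong connectivity is not required, because $M$ is allowed to be disconnected.
\end{itemize}
One point needs care: $M$ must be an honest simplicial complex and not merely a $\Delta$-complex. Two simplices of $M$ could end up sharing the same vertex set without being identified. If this happens, I would pass to a barycentric subdivision, which preserves all the pseudomanifold properties and the map. Alternatively, I would note that Theorem \ref{thm:sharp-fillrad-pseudomanifold} only uses the fundamental cycle and a simplex-wise affine map, so a $\Delta$-complex structure suffices. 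Define $\varphi$ on each $\Delta_i$ as the affine map sending labelled vertices to the corresponding points of $P$. It is well defined on $M$ because the identifications preserve labels. The fundamental class $[M]$, the sum of all $k$-simplices over $\mathbb{Z}_2$, is a cycle, and $\varphi_\#[M]=\sum_i\sigma_i=z$. Hence the pushforward is the realization $\geomreal{z}$.

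For the volume identity, general position enters. Since $k<d$ and no $d+1$ points of $P$ lie on an affine hyperplane, any $k+1$ points of $P$ are affinely independent. So each $\varphi|_{\Delta_i}$ is an affine injection, and its $k$-Jacobian $J_k\varphi$ is a positive constant on $\Delta_i$. The area formula then gives $\int_M J_k\varphi\,d\mathcal H^k=\sum_i \mathcal H^k(\geomreal{\sigma_i})$. It remains to show that the images of distinct $\sigma_i$ overlap only in an $\mathcal H^k$-null set, since then this sum equals $\mathcal H^k(\varphi(M))$. Two distinct $k$-simplices spanned by points of $P$ whose realizations overlap in positive $k$-measure must span the same affine $k$-plane. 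Their union then contains at least $k+2$ points in a $k$-plane, which lies in an affine hyperplane of $\mathbb{R}^d$ because $k\le d-1$. I need to confirm this contradicts the paper's general-position assumption. If $k+2\le d+1$, that assumption only forbids $d+1$ points on a hyperplane, so for small $k$ I would instead invoke the standard affine general position for locally finite sets, which says no $j+2$ points lie in a $j$-flat for $j<d$; this is what the statement's phrase ``affine general position'' is meant to supply.

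I expect this overlap step to be the main obstacle. It is the one place where the hypotheses must be read carefully, and getting it right is exactly what prevents the parameterization from double-counting $k$-volume. Finiteness of $M$ follows from the finiteness of the sum defining $z$.
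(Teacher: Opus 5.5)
Your proposal is correct and follows the paper's proof essentially step for step: the same copy-and-pair construction of $M$ from the even incidence counts, the same label-preserving affine $\varphi$, and the same area-formula argument in which distinct $k$-simplices span distinct affine $k$-planes (the paper likewise remarks that the face pairing may need a subdivision to become simplicial). On your worry about the overlap step, no extra hypothesis is needed: the paper's condition that no $d+1$ points lie on an affine hyperplane already implies that no $j+2$ points of $P$ lie in a $j$-flat for $j<d$, because adjoining $d-j-1$ further points of $P$ (when $|P|\ge d+1$) would put $d+1$ points in an affine subspace of dimension at most $d-1$.
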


\begin{proof}
Because $P$ is locally finite and $z$ has compact support, the
cycle $z$ contains only finitely many simplices. As we work over $\mathbb{Z}_2$, we may write
\[
    z=\sum_{\sigma\in\Sigma_z}\sigma,
\]
where $\Sigma_z$ is a finite collection of distinct
$k$-simplices and
 every $(k-1)$-simplex $\tau$ occurs an even number
of times in the boundaries of the simplices in $\Sigma_z$. Create an abstract copy of each $\sigma \in \Sigma_z$ denoted $
\tilde{\sigma}$. Note that here we are taking closed simplices  and so each copy includes the boundary of each simplex. Hence, for each $\tau$ there are an even number of abstract copies $\tilde{\tau}$. Pair the copies $\tilde{\tau}$ and quotient by the pairing. By construction, every abstract $(k-1)$-face  is incident
to exactly two \(k\)-simplices. This gives the required pseudomanifold which we denote $M$.
Define $\varphi\colon M\rightarrow\mathbb{R}^{d}$
on the copy \(\widetilde{\sigma}\), where
\(\sigma=[v_0,\ldots,v_k]\), by
\[
    \varphi\left(\sum_{i=0}^{k}t_i\widetilde v_i\right)
       :=
    \sum_{i=0}^{k}t_i v_i,
    \qquad
    t_i\geq 0,\qquad
    \sum_{i=0}^{k}t_i=1.
\]
The maps on paired faces agree, so they define a continuous simplex-wise affine map on $M$.
As the fundamental cycle of $M$ is the sum of all its
top-dimensional simplices
$    [M]=\sum_{\sigma\in\Sigma_z}
             \tilde{\sigma},
$
 we may write
\[
    \varphi[M]
      =
    \sum_{\sigma\in\Sigma_z}\sigma
      =
    \geomreal{z}.
\]
It remains to prove they have the same volume measure. Since $P$ is in
 general position and $k<d$, the vertices of every
$k$-simplex are affinely independent. Hence the restriction
\[
    \varphi\lvert _{\tilde{\sigma}}\colon
    \tilde{\sigma}\rightarrow \geomreal{\sigma} 
\]
is an affine embedding. By the area formula,
\[
    \int_{\widetilde{\sigma}}
        J_k\bigl(\varphi\rvert_{\widetilde{\sigma}}\bigr)\,
        d\mathcal{H}^{k}
      =
    \mathcal{H}^{k}(\geomreal{\sigma}).
\]
Summing over the top-dimensional simplices gives
\[
    \Vol _{k}(\varphi)
      =
    \sum_{\sigma\in\Sigma_z}
        \mathcal{H}^{k}(\geomreal{\sigma}).
\]
Finally, we show that the interiors of two distinct geometric
$k$-simplices overlap only on a set of $k$-dimensional measure
zero. Let \(\sigma,\sigma'\in\Sigma_z\) be distinct. As the vertices are in general position, $\aff(\sigma)$ and
$\aff(\sigma')$ are distinct $k$-dimensional
affine subspaces and their intersection has  dimension at most $k-1$.
Since \(\Sigma_z\) is finite, finite additivity up to null sets implies
$
    \Vol_{k}(\varphi)
       =
    \mathcal{H}^{k}\bigl(\varphi(M)\bigr),
$ completing the proof.
\end{proof}
Note that the result of the face pairing need not be simplicial but can be made so after subdivision. Additionally, the lemma does not assert that $\varphi(M)$ is itself an embedded
pseudomanifold. For example, four $k$-simplices may meet along the
same $(k-1)$-face. The abstract incidences can be paired to produce
a pseudomanifold $M$, but different abstract faces of $M$ may
have the same image. These identifications occur in dimension at
most \(k-1\), and therefore do not affect the $k$-dimensional
Hausdorff measure. The construction and conclusion
hold for every finite simplicial
cycle over $\mathbb{Z}_2$; the \v Cech construction is only used to define the geometric realization. Importantly, this allows us to apply Theorem~\ref{thm:sharp-fillrad-pseudomanifold} to sets which are parameterized by a pseudomanifold rather than only considering pseudomanifolds embedded in $\R^d$ and so applies to 
\v Cech cycles.

The result above applies to the filling radius of the geometric realization of a \v Cech cycle. This is not the same  as the \v Cech filtration, where the distances are taken from points rather than the entire realized cycle. Below we relate the two. 
 \begin{lem}\label{lem:close_1}
 Given a piecewise linear realization of a \v Cech cycle $\geomreal{z}$ with birth time $b$ and death time 1, for sufficiently small $b$,
\[\FillRad(\geomreal{z})\geq 1-o(1).\] 
 \end{lem}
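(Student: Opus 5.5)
We argue by contradiction. Suppose $\FillRad(\geomreal{z})<\rho$ for some $\rho<1-\eta$ with $\eta>0$ fixed. We will produce a chain in the \v Cech complex at a radius strictly less than $1$ that bounds $z$, which contradicts the assumption that the class dies at $1$. Throughout, the Nerve Lemma lets us work with unions of balls $B_t(\cP)$ in place of $\CC_t(\cP)$.

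First we compare the realized cycle to the union of balls at the birth radius. Every simplex of $z$ is a \v Cech simplex at radius $b$, so its vertices lie in a common ball of radius $b$. Its geometric realization therefore lies in the convex hull of those vertices, and hence in that same ball. This gives $\geomreal{z}\subset B_{2b}(\cP)$, where $\cP$ is the vertex set. More precisely, the nerve map sends $z$ to a singular cycle in $B_b(\cP)$ that is homotopic within $B_{2b}(\cP)$ to $\geomreal{z}$, via the straight-line homotopy between a point of a simplex and the common center of its ball.

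Second, we use the small filling. By the definition of filling radius there is a current, which we may take to be a $\mathbb{Z}_2$ chain, $W$ with $\partial W=\geomreal{z}$ and $\spt W\subset B_\rho(\geomreal{z})\subset B_{\rho+2b}(\cP)$. Rectifiable currents mod $2$ can be approximated by polyhedral chains in any neighborhood via the deformation theorem, so $\geomreal{z}$ is null-homologous in $\Hg_k(B_{\rho+2b+\epsilon}(\cP))$. Combined with the first step, the class represented by $z$ is trivial in $B_t(\cP)$ for $t=\rho+2b+\epsilon$. For $b$ small this gives $t<1-\eta/2<1$. Translating back through the Nerve Lemma, we get $\death(\gamma)\le t<1$, a contradiction. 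It follows that $\FillRad(\geomreal{z})\ge 1-2b-\epsilon$ for every $\epsilon>0$, which is $1-o(1)$ as $b\to 0$.

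The main obstacle is making the identification between the persistence class and the realized cycle rigorous. We need two things. First, the homology class in $B_t(\cP)$ obtained from $\geomreal{z}$ must coincide with the image of the \v Cech class under the nerve isomorphism, since the nerve equivalence is functorial with respect to the inclusions $B_b\subset B_t$. Second, the filling current must be converted into a genuine singular or simplicial chain inside an open neighborhood. For the first point, we would use the explicit partition-of-unity map $\CC_t(\cP)\to B_t(\cP)$ and check that it is homotopic to the affine realization inside $B_{t+2b}(\cP)$. For the second, we would invoke the Federer–Fleming deformation theorem mod $2$, which replaces a normal current by a polyhedral one with support in a slightly larger neighborhood.
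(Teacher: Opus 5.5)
Your proof is correct and follows the same route as the paper: a small neighborhood of $\geomreal{z}$ lies inside a union of balls around the vertices at some radius $t<1$, so a filling of radius below $1-o(1)$ would kill the class before its death time $1$. The differences are minor. The paper uses the sharper containment $B_\delta(\geomreal{z})\subset B_{\sqrt{b^2+\delta^2}}(X)$, which gives $\FillRad(\geomreal{z})\ge\sqrt{1-b^2}$, whereas your triangle-inequality bound $\geomreal{z}\subset B_{2b}(\cP)$ gives $1-2b$; both are $1-o(1)$. You also explicitly handle two points the paper takes for granted: identifying the realized cycle with the nerve class, and converting the filling current into a singular chain.
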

 \begin{proof}
 Let $X$ denote the geometric realization of the set of vertices of $z$. 
We first show that for small enough $\delta$, there is an inclusion
 \[B_\delta(\geomreal{z})\subset B_{\sqrt{b^2+\delta^2}}(X)
 .\]
For any $q \in B_\delta(\geomreal{z})$, let $y$ denote the closest point in $\geomreal{z}$. As $y$ lies in the convex combination of a subset of $X$ and the distance of $y$ to the closest point in $X$ is less than $b$, the inclusion follows. 
If we assume  $\FillRad(\geomreal{z})<\sqrt{1-b^2}$, by the above inclusion this implies that the filling lies inside $\B_{t}(X)$
for $t<1$ which is a contradiction. Finally, 
 $\sqrt{1-b^2}>1-O(b^2)$, which is $1-o(1)$ as $b\rightarrow 0$, completing the proof.
 \end{proof}

Intuitively, we would like to use Gromov's sharp isoperimetric inequality connecting filling radius and volume along with the convergence of the covering density to establish the optimality of the covering of a unit sphere for constructing cycles with large persistence with as few points as possible. A problem with this approach is that as $b\rightarrow 0$, the optimal cycle in terms of covering rather than volume may depend on $b$, which means we cannot use the convergence of the covering density directly.

Hence, to prove the result, we will require tools from geometric measure theory. We begin by outlining the general strategy. We suppose that there exists a sequence of cycles, which can be generated by fewer points than a sphere, and our goal is to show that the existence of such a sequence implies a contradiction. A critical part of this proof is showing that the space of large persistence cycles are compact and so has a well-behaved limit. From this we can then construct the contradiction. Throughout this section, we consider numerous objects which are related but not the same. For example, we consider geometric realizations of \v Cech cycles and rectifiable currents, as well as coverings and the centers of the covers as measures. Throughout the section, we are as explicit as possible about the objects we are working with but this does require some notation. We denote the space of integral rectifiable currents over $\mathbb{Z}_2$ in $\R^d$ by $\mathbf{I}_k(\mathbb{R}^d;\mathbb{Z}_2)$. Currents will always be taken over $\mathbb{Z}_2$, unless explicitly specified.  Additionally, we use $\mathbf{M}(T)$ and $\lVert T\rVert$ to denote the current mass and mass measure respectively.  Finally, we remind the reader that $s_k:=\Vol_k(\mathbb{S}^k)$ and $\coverconst$ is the
Euclidean $k$-dimensional normalized covering constant from Theorem~\ref{thm:covering}.

Most of the proof is devoted to establishing compactness. The first step is to show that every compact persistent cycle admits a finite volume bound at a small cost of the filling radius. This is sufficient to apply standard compactness results for currents in the flat topology. The next steps involve showing that the limit is well-behaved, namely that we can localize it to a compact set and that the filling radius is preserved in the limit. 

Fix the death time to 1, and consider a sequence of cycles $S_j$ with persistence $\pi_j$ with the birth time $b_j=1/\pi_j\rightarrow 0$, such that
$b_j^k\card{S_j}\leq (1-\delta)\coverconst\Vol^k(\mathbb S^k),$
for some fixed $\delta>0$ and where $\pi_j\geq \pi_0$ and $\pi_{0} \rightarrow \infty$. We remind the reader that by Lemma \ref{lem:close_1} $\FillRad(S_j)\geq 1-o(1)$. 

\begin{lem}
\label{lem:separated-net-regularization}
Let \(X\subset\mathbb R^d\) be finite, and let $\gamma$ 
be a persistent \v Cech class whose death time is normalized to \(1\) and assume the normalized birth time is $b$. 
Fix \(\eta>0\), set
$h=(1+\eta)b<1,$
and let \(Y\subset X\) be a maximal $(\eta b)$-separated subset. Then there exists a cycle $z_Y\in \CC_h(Y)$, with the geometric realization $S:=\geomreal{z_Y}$, such that $i(z_Y)$ is a representative of $\gamma$, where $i$ denotes the natural inclusion, with the following properties:
\begin{enumerate}
\item $
    \spt (S)
    \subseteq B_h(Y),$
\item $\death ([z_Y])\geq 1 $,
\item $\pi([z_Y])
    \geq\frac{1}{h}
    =\frac{\pi(\gamma)}{1+\eta},$
\item $\FillRad (S)\geq1-o(1).$
\item $\Vol(S)
    \leq C_{\mathrm{net}}(d,k,\eta)\,h^k\card{Y}.$
\end{enumerate}
\end{lem}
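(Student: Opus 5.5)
Let $X$ be the vertex set, $\gamma$ the class with birth $b$ and death $1$, and $z$ a representative of $\gamma$ in $\CC_b(X)$. The plan is to push $z$ onto the net $Y$ by a simplicial map, then read off the five properties.

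\textbf{Step 1: the simplicial map.} Since $Y$ is a maximal $(\eta b)$-separated subset, every $x\in X$ has a nearest point $g(x)\in Y$ with $\|x-g(x)\|\le \eta b$. For a simplex $\sigma$ of $\CC_b(X)$ there is a point $c$ with $\sigma\subset \overline B_b(c)$. Then $g(\sigma)\subset \overline B_{(1+\eta)b}(c)$, so $g(\sigma)$ is a simplex of $\CC_h(Y)$. Hence $g$ is simplicial $\CC_b(X)\to\CC_h(Y)$, and we set $z_Y:=g_\#(z)$.

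Next, $i\circ g$ and the inclusion $\CC_b(X)\hookrightarrow\CC_h(X)$ are contiguous: for each $\sigma$, the set $\sigma\cup g(\sigma)$ lies in $\overline B_h(c)$, so it is a simplex of $\CC_h(X)$. Contiguous maps induce equal maps on homology. Thus $i(z_Y)$ is homologous in $\CC_h(X)$ to the image of $z$, and represents $\gamma$ at scale $h$. This is the meaning of "representative"; since $h<1$, $\gamma$ is still alive there.

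\textbf{Step 2: the five properties.}

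Property (1): each realized simplex of $z_Y$ is the convex hull of points of $Y$ lying in a common ball of radius $h$ around $c$. Every point $p$ of the hull lies in that ball, so $\|p-c\|\le h$. A vertex $y$ also satisfies $\|y-c\|\le h$, but this only bounds $\|p-y\|$ by $2h$. The fix is to argue as in Lemma~\ref{lem:close_1}, giving $\spt S\subset B_{h'}(Y)$ with $h'\le h$ up to replacing open by closed balls. Should the clean constant fail, I would accept $\spt S\subset \overline B_h(\text{hull})$, which is all that the later compactness uses.

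Property (2): if $[z_Y]$ died in $\CC_t(Y)$ for some $t<1$, then $i(z_Y)$ would die in $\CC_t(X)$. Since $i(z_Y)$ represents $\gamma$ for $t\ge h$, this contradicts $\death(\gamma)=1$.

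Property (3): the birth of $[z_Y]$ is at most $h$, so $\pi([z_Y])\ge 1/h=\pi(\gamma)/(1+\eta)$.

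Property (4): Lemma~\ref{lem:close_1} applies verbatim to $z_Y$, which has birth at most $h\to0$ and death at least $1$. This gives $\FillRad(S)\ge 1-o(1)$.

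\textbf{Step 3: the volume bound (the main obstacle).} Each $k$-simplex of $z_Y$ has diameter at most $2h$, so its $k$-volume is at most $C(k)h^k$. It remains to bound the number of $k$-simplices by $C\,\card{Y}$. Every simplex containing a vertex $y$ has its other vertices in $B_{2h}(y)\cap Y$. The points of $Y$ are $\eta b$-separated and $h=(1+\eta)b$, so a packing argument gives
\[
\card{B_{2h}(y)\cap Y}\le \left(\frac{4h+\eta b}{\eta b}\right)^d=:K(d,\eta).
\]
So each vertex lies in at most $\binom{K}{k}$ simplices of $z_Y$. Over $\mathbb Z_2$, $z_Y$ is a sum of distinct simplices, and cancellation only reduces the count. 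Therefore
\[
\Vol(S)\le \card{Y}\binom{K}{k}C(k)h^k.
\]
This yields $C_{\mathrm{net}}(d,k,\eta)$. The constant blows up as $\eta\to0$, which is why $\eta$ is fixed here.

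The delicate points are the exact form of (1) and checking that general position survives passage to $Y\subset X$, so that Lemma~\ref{lem:pseudomanifold-parametrization} still applies later. The latter is immediate, since $Y\subset X$.
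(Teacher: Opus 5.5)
Your route is essentially the paper's. The vertex map $g$, together with the contiguity of $i\circ g$ and the inclusion $\CC_b(X)\hookrightarrow\CC_h(X)$, is the simplicial form of the interleaving $B_r(X)\subseteq B_{r+\eta b}(Y)\subseteq B_{r+\eta b}(X)$ used there. Your (2), (3) and the packing bound in (5) match the paper, and yours are more explicit. Applying Lemma~\ref{lem:close_1} directly to $z_Y$ for (4) is a legitimate, slightly cleaner alternative to transferring the filling radius through the interleaving.

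\textbf{The gap is property (1), which you leave unproved.} You rightly note that going through the common center $c$ only gives $2h$. Deferring to Lemma~\ref{lem:close_1} does not help: its proof simply takes for granted that every point of a realized simplex is within the birth radius of some vertex, which is exactly the statement needed. Your fallback $\spt S\subset\overline B_h(\text{hull})$ is automatic and says nothing about $Y$. The justification that ``this is all the later compactness uses'' is also false. The mass bound and Federer--Fleming compactness do not use (1), but the covering step does. In the proof of Proposition~\ref{prop:optimal}, $\spt(S_j)\subset B_{h_j}(Y_j)$ is fed into Lemma~\ref{claim:cover-localization-caps} and then into Lemma~\ref{lem:varying-cover} at radius $\Lambda_\vartheta h_j$. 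The sharp constant $\coverconst$ is sensitive to that radius. With radius $2h_j$ you would only get a lower bound of order $2^{-k}\coverconst s_k$, which destroys the proposition.

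\textbf{The missing step is one computation.} Let $p=\sum_i t_iy_i$, with $t_i\ge0$ and $\sum_it_i=1$, be a point of a realized simplex of $z_Y$ whose vertices satisfy $\|y_i-c\|<h$. Expand $\|p-y_i\|^2=\|(p-c)-(y_i-c)\|^2$ and use $\sum_it_i(y_i-c)=p-c$. This gives
\[
\sum_i t_i\|p-y_i\|^2=\sum_i t_i\|y_i-c\|^2-\|p-c\|^2<h^2,
\]
so $\min_i\|p-y_i\|<h$, i.e.\ $\spt(S)\subseteq B_h(Y)$ with the sharp radius. This is also the fact that the proof of Lemma~\ref{lem:close_1} relies on, so with it in place your argument for (4) is self-contained as well.
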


\begin{proof}
We first recall the standard construction of a maximal $(\eta b)$-separated set. Choose an arbitrary $x\in X$, and add it to $Y$. Iteratively add the point from $X\backslash Y$ that is furthest from $Y$ until all remaining points in $X$ are within $\eta b$ of a point in $Y$. 
The resulting $Y$ is an $(\eta b)$-separated maximal set. The sets are $\eta b$-interleaved by construction (see \cite{cohensteiner2007stability,chazal2009proximity} for more on interleaving). That is, for any $r>0$,
\[B_r(X) \subseteq B_{r+\eta b}(Y)\subseteq B_{r+\eta b}(X).\]
This follows from maximality. It follows that for $r=b$, a representative of $\gamma$ in $\CC_b(X)$ can be mapped to a representative in $\CC_h(Y)$. This immediately implies (1); (2) follows from the fact that restricting to $Y$ can only increase death time; (3) follows immediately from (2) since birth time must be before $h$. By Lemma \ref{lem:close_1}, the filling radius on $X$ is at least $1-o(1)$, and $\eta b$-interleaving implies that the filling radius is at least $1-o(1)-\eta b$, and so $1-o(1)$. 
Finally, (5) follows from a standard packing argument for separated sets. As all points must be at least $\eta b$ separated, within a ball of radius $(1+\eta)b$, at most a constant number of other points can be packed and hence, 
each vertex is contained in a constant number of simplices. The bound immediately follows.
\end{proof}

\begin{rem}
\label{rem:regularization-order-of-limits}
In applications of Lemma~\ref{lem:separated-net-regularization},
\(\eta>0\) is kept fixed while the compactness limit is taken. This
ensures that \(C_{\mathrm{net}}(d,k,\eta)\) remains uniform. Only after obtaining
the limiting lower bound do we let \(\eta\to 0\).
\end{rem}
From this point on, we assume the sequence of cycles $S_j$ are $k$-currents. Polyhedral cycles such as the geometric realizations of \v Cech cycles naturally define integral currents (see
\cite[Sections~2.4 and~3.1--3.3]{MarcheseStuvard2018}. Additionally, under a general position assumption, the  volume of a geometric realization of a PL-cycle and the current mass are the same, i.e. $\Vol(\geomreal{z}) = \mathbf{M}(z)$.

We first use standard compactness and rectifiability results for flat
chains with $\mathbb Z_2$ coefficients; see the appendix
of \cite{White2009}.

By Lemma~\ref{lem:separated-net-regularization} (5), and the
assumed bound on the number of vertices, there exists
$M_0<\infty$ such that $\mathbf M(S_j)\le M_0$ for every $j$. As 
translations preserve mass and commute with the boundary operator, any translated cycles have uniformly bounded mass and since they are cycles, zero
boundary mass, so the following local flat compactness theorem applies. 
By \cite[Appendix, Theorem~A.1]{White2009}, applied with 
\(\mathbb R^d\) as the open set and coefficient group $\mathbb Z_2$, a
subsequence of the translated  $S_j$ converges locally in the flat
topology to a flat chain \(T\). Lower semicontinuity of mass gives
\(\mathbf{M}(T)\le M_0\), while continuity of the boundary gives
\(\partial T=0\). Finally,
\cite[Appendix, Theorem~A.4]{White2009} implies that \(T\) is rectifiable. 

The limit may, however, be zero. To obtain a nonzero limit
retaining large filling radius, we will first localize the
cycles. The following local lower mass bound is the first
step in this construction.
Define a concentration function
\[
\mathcal Q_j(R):=\sup\limits_{x\in \R^d} \mathbf{M}(S_j \llcorner B_R(x)),
\]
where $S_j \llcorner B_R(x)$ denotes the restriction of the current to $B_R(x)$.  

\begin{lem}\label{lem:noncollapsing}
For every $a>0$, if $\FillRad (S)\geq a$, then 
\begin{equation}\label{eq:localvolumebound}
\sup\limits_{x\in \R^d} (\mathbf{M}(S\llcorner B_{Ca}(x)))\geq ca^k
\end{equation}
\end{lem}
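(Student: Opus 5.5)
We argue by contraposition: if every ball of radius $Ca$ carries mass less than $ca^k$, we construct a filling of $S$ supported in $B_\rho(\spt S)$ with $\rho<a$, contradicting $\FillRad(S)\ge a$. This is the standard ``local isoperimetric'' mechanism behind Gromov's filling inequality (and the Wenger-style proofs of it). Small mass in every ball of a fixed scale lets the cycle be coned off locally at that scale, and the cones glue to a filling at distance comparable to the scale. Here $C$ and $c$ are constants depending only on $d$ and $k$, chosen at the end.

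\textbf{Step 1 (slicing).} Cover $\R^d$ by a grid of cubes $\{Q_i\}$ of side $\ell=\lambda a$, with $\lambda$ a small constant to be chosen. Translate the grid by a random vector and use the coarea/slicing inequality for $\mathbb Z_2$ flat chains. This yields a translate for which the slices of $S$ along all grid faces, of every codimension, have mass bounded by $C_1\ell^{-j}$ times the mass of $S$ in a neighbourhood of the corresponding cube, where $j$ is the codimension. This step is standard (Federer--Fleming deformation). It decomposes $S=\sum_i S\llcorner Q_i$, with boundaries $\partial(S\llcorner Q_i)$ supported on $\partial Q_i$ and of controlled mass.

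\textbf{Step 2 (local coning and gluing).} By hypothesis each $S\llcorner Q_i$ has mass less than $ca^k$, since $Q_i\subset B_{Ca}(x_i)$ once $C\ge\sqrt d\,\lambda$. The slices are then lower-dimensional chains of mass $\lesssim c a^k\ell^{-j}$. We build the filling inductively from the lowest-dimensional skeleton upward, coning each piece from the centre of its cell, and using the Federer--Fleming pushforward to move pieces onto the skeleton. Each cone adds mass at most $\ell$ times the mass of its base and stays within distance $\sqrt d\,\ell$ of the support of the piece being coned. The key quantitative point is the following. If $c$ is small relative to $\lambda^k$, then a $(k-j)$-dimensional slice of mass below $c_0\ell^{k-j}$ cannot ``wrap around'' the cell face, so its Federer--Fleming projection onto the $(k-j)$-skeleton of the face vanishes (the deformation theorem's constancy argument over $\mathbb Z_2$). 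The low-mass hypothesis is used exactly here, to kill all the top-skeleton contributions. The resulting chains $W_i$ satisfy $\partial\sum_i W_i=S$. Each $W_i$ has support within $C_2\ell$ of $\spt(S\llcorner Q_i)$, where we use that the coning is performed only near where $S$ actually lives, and not at arbitrary cell centres.

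\textbf{Step 3 (conclusion).} Hence $\FillRad(S)\le C_2\lambda a<a$ once $\lambda<1/C_2$, contradicting $\FillRad(S)\ge a$. Fixing $\lambda$ and then $c=c_0\lambda^k$ and $C=\sqrt d\,\lambda$ (or larger) gives \eqref{eq:localvolumebound}. I expect the main obstacle to be Step 2, in particular ensuring that the filling stays $O(\ell)$-close to $\spt S$ rather than merely to the cube skeleton, since the filling radius is measured from $\spt(S)$. To handle this I would cone from a point of $\spt(S\llcorner Q_i)$ itself, which keeps every piece within $\operatorname{diam} Q_i$ of the support. A shortcut alternative, which I would try first, is to invoke Wenger's short proof of Gromov's filling inequality directly: his argument produces exactly this local dichotomy, so the lemma would follow with $C=C(d,k)$ constant and no need for the skeleton bookkeeping.
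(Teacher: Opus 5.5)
\textbf{Overall.} Your strategy is the paper's: a mesh at scale $\ell=\lambda a$, the $\mathbb Z_2$ constancy argument to kill whatever part of $S$ lands on the $k$-skeleton, and a filling that stays within $O(\ell)$ of $\spt(S)$. Your worry about that last point is not a real obstacle. Every nonzero cone is built inside a closed convex cell meeting $\spt(S)$, and in the Federer--Fleming version the homotopy is supported in the closed cells meeting $\spt(S)$.

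\textbf{The gap is Step~1.} This is exactly where the hypothesis has to enter. A single random translation of the grid controls slice masses only in expectation, face by face. Because one translation is shared by all faces, you cannot get $\mathbf{M}(s_F)\le C_1\ell^{-j}\,\mathbf{M}(S\llcorner N_F)$ simultaneously for every face $F$, where $s_F$ is the slice on $F$ and $N_F$ the neighbourhood you mention. Averaging only bounds $\sum_F\mathbf{M}(s_F)\lesssim\ell^{-j}\mathbf{M}(S)$. That is useless in the regime of the lemma, where total mass is large but local mass is small.

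Here is a concrete failure. Take $d=2$, $k=1$, and let $S$ be a finite union of widely separated circles of radius $\epsilon\ll\ell$. Choose the centres so that, reduced modulo $\ell\mathbb Z^2$, they form an $\epsilon/2$-net of the torus.
\begin{itemize}
\item Every ball of radius $Ca$ carries mass at most $2\pi\epsilon$.
\item For every translate of the grid, some circle encloses a grid vertex and crosses each of the four edges at that vertex exactly once.
\item Those $0$-dimensional slices have mass $1\gg C_1\ell^{-1}\cdot 2\pi\epsilon$ and odd parity, so they cannot be filled inside their edges.
\end{itemize}
So the vanishing that Step~2 relies on fails there. Here $\FillRad(S)$ is tiny, so it is your construction, not the lemma, that breaks.

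\textbf{What is missing.} You need locality obtained from choices made independently in each cell. Federer--Fleming radial projection from a centre chosen separately in each $d$-cube, and then in each lower-dimensional face, gives $\mathbf{M}(S'\llcorner\sigma)\le C\,\mathbf{M}(S\llcorner\meshstar(\sigma))$ for all $k$-faces $\sigma$ at once. This is the localized deformation theorem the paper quotes (Basso--Wenger--Young, Thm.~A.2). With it, no slicing or bottom-up coning is needed:
\begin{itemize}
\item $S=S'+\partial V$ with $\spt(V)\subset B_{C\ell}(\spt S)$;
\item each face satisfies $S'\llcorner\sigma\in\{0,[\sigma]\}$ over $\mathbb Z_2$;
\item a mass below the face volume forces $S'=0$, hence $\FillRad(S)\le C\ell<a$.
\end{itemize}

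\textbf{Your fallback.} Citing Wenger does not bypass this. His theorem bounds $\FillRad(S)$ by $C\,\mathbf{M}(S)^{1/k}$, i.e.\ by total mass. The local statement would have to be extracted from his proof rather than cited.
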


\begin{proof}
In this proof there are  constants $c$ and $C$ which can change meaning and value but depend only on $d$ and $k$. We use a localized Federer--Fleming deformation theorem. During the deformation, we must control both the support of the current swept out during the deformation and the mass of the deformed chain on each mesh face by the original mass in its star. These local estimates are stated explicitly in
\cite[Theorem~A.2]{BassoWengerYoung2023}.
Although formulated there for integer currents, the proof uses
radial projections and their associated homotopies, which work
equally with $\mathbb Z_2$ coefficients\footnote{For a deformation theorem for flat chains with coefficients
in an arbitrary complete normed abelian group, see
\cite[Theorem~1.1]{White1999Deformation}.}.

Consider a (hyper)-cubical mesh of size $ \ell=\lambda_{d,k}a$. We apply the simplicial deformation theorem to a fixed compatible
triangulation of the cubical mesh. Each resulting $k$-simplex has
volume bounded below by $c_{d,k}\ell^k$, and its star has diameter
at most $C_d\ell$. By \cite[Theorem~A.2]{BassoWengerYoung2023}, we can find  a deformation of $S$, denoted $S'$, which is a cellular cycle supported on the $k$-skeleton of the mesh.    $\lambda_{d,k}>0$ can be chosen small enough so that $S'$ is within $a/2$ of $\spt (S)$. Furthermore,  we can write $S = S'+\partial V$, where $V$ is a $(k+1)$-current with
\[
\spt (V)\subseteq  B_{C\ell}(\spt (S))\subseteq  B_ {a/2}(\spt (S)). 
\] 
Furthermore, for every $k$-face $\sigma$, $\mathbf{M}(S'\llcorner \sigma)\leq C\mathbf{M}(S\llcorner \meshstar (\sigma))$, where $\meshstar (\sigma)$ is the star of $\sigma$. 

Now assume the lemma does not hold. Since $\meshstar (\sigma)$ lies in a ball of radius $C_d\ell\leq a/2$, this implies      
\[\mathbf{M}(S'\llcorner \sigma)\leq c \cdot  C\cdot (a/2)^k,\]
as we have assumed  \eqref{eq:localvolumebound} does not hold. 
Choose $c$ small enough that 
\begin{equation}\label{eq:lessthan}
c\cdot  C\cdot a^k<c_{d,k}\ell^k,
\end{equation}
where $c_{d,k}\ell^k$ is the minimum volume of a simplex in the triangulation of the cubical mesh. 
However, since $S'$ is locally constant on the interior of $\sigma$ (and is over $\mathbb{Z}_2$), this implies that if $\mathbf{M}(S'\llcorner \sigma)>0$, then  
$\mathbf{M}(S'\llcorner \sigma)=\Vol_k(\sigma)\geq c_{d,k}\ell^k$,  contradicting \eqref{eq:lessthan}. Hence,  $S'$ must be 0 on all faces and so $\partial V=S$. Finally, since $\spt (V)\subset  B_{a/2}(\spt (S))$, this implies that $\FillRad (S)< a$ which is a contradiction. 
\end{proof}

While the above lemma gives a lower bound, it does not prevent mass and the filling-radius obstruction from splitting into widely separated pieces. We therefore localize before taking a limit. The next lemma shows that a bounded-mass cycle can be cut along a suitable sphere and closed by a cap having arbitrarily small mass and lying arbitrarily close to the cut. Iterating this construction will leave a residual cycle with small filling radius; consequently, at least one of the bounded pieces must retain almost all of the filling radius of the original cycle.

\begin{lem}
\label{lem:localization}
Let \(1 \leq k < d\) and let \(M_0,\rho>0\). For every
\(0<\lambda_{\mathrm{loc}}<\rho\) and every \(m_{\mathrm{cap}}>0\), for every 
\(x\in\mathbb{R}^d\), there exists an $R_{\mathrm{loc}}=R_{\mathrm{loc}}(d,k,M_0,\rho,\lambda_{\mathrm{loc}},m_{\mathrm{cap}})<\infty$ such that for $S\in \mathbf{I}_k(\mathbb{R}^d;\mathbb{Z}_2)$, a compactly supported \(k\)-cycle satisfying $\mathbf{M}(S)\leq M_0,$  there is 
\begin{enumerate}
\item a $ t\in(2\rho,R_{\mathrm{loc}}-\lambda_{\mathrm{loc}})$ for which $S\llcorner B_t(x)\in \mathbf{I}_k(\mathbb R^d;\mathbb Z_2),$
and a \(k\)-current
$
    \mathcal C\in \mathbf{I}_k(\mathbb{R}^d;\mathbb{Z}_2)
$
such that
\begin{align*}
    \partial \mathcal C
    =
    \partial\bigl(S\llcorner B_t(x)\bigr),
    \qquad
    \mathbf{M}(\mathcal C)\leq \frac{m_{\mathrm{cap}}}{2},
\\
    \spt  (\mathcal C)
    \subset
    B_{\lambda_{\mathrm{loc}}}\!\!\left(
        \spt \left(
        \partial\bigl(S\llcorner B_t(x)\right)\right);
\end{align*}
\item
for $t$ and $\mathcal C$ as in (1), we can find  
\[
    T:=S\llcorner B_t(x)+\mathcal C,
    \qquad
    U:=\bigl(S-S\llcorner B_t(x)\bigr)+\mathcal C.
\]
such that $T$ and $U$ are integral \(k\)-cycles and 
\begin{align*}
    S=T+U&, \qquad
    \mathbf{M}(T)+\mathbf{M}(U)
   \leq
    \mathbf{M}(S)+m_{\mathrm{cap}}\\
    \spt(T)\subset B_{R_{\mathrm{loc}}}(x),
\quad &
    \spt(T)\cup\spt (U)
    \subset
    B_{\lambda_{\mathrm{loc}}}\!(\spt (S)), \quad
    T\llcorner B_\rho(x)
    =
    S\llcorner B_\rho(x).
\end{align*}
\end{enumerate}
\end{lem}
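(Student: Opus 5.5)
The plan is to combine slicing with an averaging (coarea) argument, and then to fill the small slice boundary with an isoperimetric (cone or Federer--Fleming) filling. Write $S_t:=\partial(S\llcorner B_t(x))$. By the slicing theory for integral currents (valid for flat chains mod 2; see the appendix of \cite{White2009}), for almost every $t$ the restriction $S\llcorner B_t(x)$ is integral. Its boundary is the slice $\langle S,\|\cdot-x\|,t\rangle$, because $\partial S=0$. The slice is a $(k-1)$-cycle supported on the sphere $\partial B_t(x)\cap\spt(S)$, and the slicing inequality gives
\[
\int_a^b \mathbf M(S_t)\,dt\le \mathbf M\bigl(S\llcorner (B_b(x)\setminus B_a(x))\bigr).
\]

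\textbf{Choosing $t$.} Divide $(2\rho,\infty)$ into $N$ consecutive annuli of width $w$, where $N$ and $w$ are fixed below. The annuli are disjoint, so their masses sum to at most $M_0$, and some annulus carries mass at most $M_0/N$. On that annulus the averaging inequality produces a generic $t$ with $\mathbf M(S_t)\le M_0/(Nw)$. Set $R_{\mathrm{loc}}:=2\rho+Nw+\lambda_{\mathrm{loc}}$; then $t\in(2\rho,R_{\mathrm{loc}}-\lambda_{\mathrm{loc}})$, and $R_{\mathrm{loc}}$ depends only on the allowed parameters.

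\textbf{The cap.} Fill $S_t$ with a $k$-current $\mathcal C$ of small mass lying close to $\spt(S_t)$. The main obstacle is making the filling local, i.e.\ inside $B_{\lambda_{\mathrm{loc}}}(\spt S_t)$, rather than merely small in mass. A global cone over $S_t$ would have controlled mass but poor support. I would first try the Federer--Fleming isoperimetric inequality in its local form: a $(k-1)$-cycle $Z$ with $\mathbf M(Z)\le\mu$ small admits a filling $\mathcal C$ with
\[
\mathbf M(\mathcal C)\le C\mu^{k/(k-1)}\quad\text{and}\quad \spt\mathcal C\subset B_{C\mu^{1/(k-1)}}(\spt Z).
\]
This is the standard consequence of the deformation theorem \cite[Theorem~A.2]{BassoWengerYoung2023}, run at mesh size proportional to $\mu^{1/(k-1)}$: the deformed cellular cycle has too little mass to occupy any cell, so it vanishes, exactly as in the proof of Lemma~\ref{lem:noncollapsing}, and $Z=\partial V$ with $V$ supported near $Z$. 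The case $k=1$ is separate: $S_t$ is a 0-cycle mod 2 with an even number of points, at most $M_0/(Nw)<1$ of them, hence zero, and $\mathcal C=0$ works. Now fix $w=1$ and choose $N$ large enough that $C\mu^{k/(k-1)}\le m_{\mathrm{cap}}/2$ and $C\mu^{1/(k-1)}<\lambda_{\mathrm{loc}}$, where $\mu=M_0/N$.

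\textbf{The decomposition in (2).} This is then bookkeeping. Both $T$ and $U$ are cycles, because $\partial\mathcal C=S_t$ and $\partial(S-S\llcorner B_t)=-S_t$ (with signs irrelevant mod 2). Since $2\mathcal C=0$ over $\mathbb Z_2$, we get $T+U=S$, and the mass bound is immediate. For the supports, $\spt\mathcal C\subset B_{\lambda_{\mathrm{loc}}}(\spt S)$ gives the $\lambda_{\mathrm{loc}}$-neighbourhood inclusion. We also have $\spt T\subset \overline B_t(x)\cup B_{\lambda_{\mathrm{loc}}}(\partial B_t(x))\subset B_{R_{\mathrm{loc}}}(x)$. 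Finally, $\spt\mathcal C$ lies within $\lambda_{\mathrm{loc}}<\rho$ of the sphere of radius $t>2\rho$, so it misses $B_\rho(x)$, which gives $T\llcorner B_\rho(x)=S\llcorner B_\rho(x)$.
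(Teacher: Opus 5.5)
Your proposal is correct and follows the paper's proof essentially step for step: slice by the distance to $x$ to find a generic $t\in(2\rho,R_{\mathrm{loc}}-\lambda_{\mathrm{loc}})$ with small slice mass, fill that slice using the support-controlled Federer--Fleming isoperimetric inequality (the case $k=1$ is trivial, since an integral mod-$2$ $0$-current of mass $<1$ vanishes), and then do the $\mathbb Z_2$ bookkeeping for $T$ and $U$. The only cosmetic difference is your subdivision into $N$ annuli, which is redundant: averaging the slicing inequality over the whole interval of length $Nw$ already gives the bound $M_0/(Nw)$, and this is exactly what the paper does with $J_{\mathrm{loc}}$ steps of width $\rho$.
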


\begin{proof}
First suppose  that \(k\geq 2\). Let \(C_{\mathrm{iso}}=C_{\mathrm{iso}}(d,k)\)
be the constant in the support-controlled Euclidean isoperimetric
inequality, see \cite[p.~119, Federer--Fleming theorem]{guthNotesGromovsSystolic2006}. In $\R^d$, every \((k-1)\)-cycle \(Z\)  admits a
\(k\)-current $\mathcal C$ such that
$
\partial \mathcal C=Z,
$  with
\[
\mathbf{M}(\mathcal C)
 \leq C_{\mathrm{iso}}
       \mathbf{M}(Z)^{k/(k-1)},
\]
such that  $\spt (\mathcal C)
 \subset
 B _{C_{\mathrm{iso}}
 \mathbf{M}(Z)^{1/(k-1)}}
 (\spt(Z)).$

For $k\geq 2$, choose  
\(0<\delta_{\mathrm{slice}}<1\) sufficiently small so that
$
C_{\mathrm{iso}}\delta_{\mathrm{slice}}^{1/(k-1)}
 \leq \min\{\frac{m_{\mathrm{cap}}}{2},\lambda_{\mathrm{loc}}\}
$;
   for \(k=1\) any \(\delta_{\mathrm{slice}}\in(0,1)\) is sufficient.

Let \(J_{\mathrm{loc}}\in\mathbb N\) be sufficiently large so that
$\frac{M_0}{J_{\mathrm{loc}}\rho}<\delta_{\mathrm{slice}},$
and set $
R_{\mathrm{loc}}:=(J_{\mathrm{loc}}+2)\rho+\lambda_{\mathrm{loc}}.$
If we denote
$
u_x(y):=\dist(x,y) 
$, i.e. the distance to $x$,
by the slicing inequality,
\begin{align*}
\int_{2\rho}^{(J_{\mathrm{loc}}+2)\rho}
\mathbf{M}
\bigl(\langle S,u_x,t\rangle\bigr)\,dt
&\leq
\mathbf{M}\left(
S\llcorner
\bigl(B_{(J_{\mathrm{loc}}+2)\rho}(x)\setminus B_{2\rho}(x)\bigr)
\right) \\
&\leq \mathbf{M}(S) \leq M_0,
\end{align*}
where $\langle S,u_x,t\rangle$ is the slice of the $k$-current $S$ by $u_x^{-1}(t)=\partial B_t(x)$.  
Since the interval has length \(J_{\mathrm{loc}}\rho\), there exists
$
t\in\bigl(2\rho,(J_{\mathrm{loc}}+2)\rho\bigr)$
such that \(S\llcorner B_t(x)\) is integral and 
\[
\mathbf{M}
\bigl(\langle S,u_x,t\rangle\bigr)
\leq\frac{M_0}{J_{\mathrm{loc}}\rho}
<\delta_{\mathrm{slice}}.
\]
By rectifiability, for almost every $t$,
\(\langle S,u_x,t\rangle
=
\partial\bigl(S\llcorner\{u_x<t\}\bigr)
-
(\partial S)\llcorner\{u_x<t\}.\) Since 
$\partial S=0$, it follows that $\langle S,u_x,t\rangle = \partial(S\llcorner B_t(x))$ and therefore $\mathbf{M}(\partial(S\llcorner B_t(x)))<\delta_{\mathrm{slice}}$. Set $Z:=\partial(S\llcorner B_t(x))$. 
For \(k\geq2\), we apply the support-controlled isoperimetric inequality referenced above to $Z$, which provides 
a current \(\mathcal C\in\mathbf I_k(\mathbb R^d;\mathbb Z_2)\), where $\partial \mathcal C=Z$  and 
\begin{align*}
\mathbf{M}(\mathcal C)
&\leq
C_{\mathrm{iso}}
\mathbf{M}(Z)^{k/(k-1)}
\leq\frac{m_{\mathrm{cap}}}{2},
\\
\spt(\mathcal C)
&\subset
B_{C_{\mathrm{iso}}
\mathbf{M}(Z)^{1/(k-1)}}
(\spt (Z))
\subset
B_{\lambda_{\mathrm{loc}}}\!(\spt (Z)),
\end{align*}
where the last inclusion follows by applying  support control to the mass bound on $Z$.

For \(k=1\), then \(Z\) is a zero-dimensional integral current. Hence its mass is a nonnegative integer, and
$
\mathbf{M}(Z)<\delta_{\mathrm{slice}}<1.
$
It follows that \(Z=0\), and we can take \(\mathcal C=0\). 
Hence, we have found the required $t$ and $\mathcal C$
for all \(k\geq1\) and the proof of (1) is complete.

To prove (2), we first define
\[
    S^{\mathrm{in}}:=S\llcorner B_t(x),
    \qquad
    S^{\mathrm{out}}:=S-S^{\mathrm{in}}.
\]
This separates $S$ into the restriction to the ball and everything else. By definition,
\[
    \partial S^{\mathrm{in}}
    =\partial\bigl(S\llcorner B_t(x)\bigr)
    =\partial \mathcal C.
\]
Since $\partial S=0$ and we are working over $\mathbb{Z}_2$, we also have
$
    \partial S^{\mathrm{out}}
    =\partial S^{\mathrm{in}}.
$
Next we define
\[
    T:=S^{\mathrm{in}}+\mathcal C,
    \qquad
    U:=S^{\mathrm{out}}+\mathcal C.
\]
Both are integral $k$-currents, and
\[
    \partial T
    =\partial S^{\mathrm{in}}+\partial \mathcal C=0,
    \qquad
    \partial U
    =\partial S^{\mathrm{out}}+\partial \mathcal C=0,
\]
so $T$ and $U$ are integral $k$-cycles. Over
$\mathbb Z_2$, we therefore have
\[
    T+U
    =S^{\mathrm{in}}+S^{\mathrm{out}}+\mathcal C+\mathcal C
    =S, 
\]
which is the first condition in (2).
For almost all $t$, $S\llcorner \partial B_t(x)$ has no $k$-volume, so the inside and outside restrictions split the mass of
$S$
\[
    \mathbf{M}(S^{\mathrm{in}})+\mathbf{M}(S^{\mathrm{out}})
    =\mathbf{M}(S).
\]
Using the triangle inequality and the estimate
$\mathbf{M}(\mathcal C)\leq m_{\mathrm{cap}}/2$ from part~\textnormal{(1)}, we obtain
\begin{align*}
    \mathbf{M}(T)+\mathbf{M}(U)
    &\leq
    \mathbf{M}(S^{\mathrm{in}})+\mathbf{M}(S^{\mathrm{out}})
    +2\mathbf{M}(\mathcal C) \\
    &\leq \mathbf{M}(S)+m_{\mathrm{cap}},
\end{align*}
as required. 
It remains to verify the support properties. Since  
$
    \spt(\partial\left(S\llcorner B_t(x)\right))
    \subset \spt (S)\cap\partial B_t(x),$
part (1) gives
\[
    \spt(\mathcal C)
    \subset
    B_{\lambda_{\mathrm{loc}}}\!\left(
        \spt\partial\bigl(S\llcorner B_t(x)\bigr)
    \right)
    \subset B_{t+\lambda_{\mathrm{loc}}}(x).
\]
As $t<R_{\mathrm{loc}}-\lambda_{\mathrm{loc}}$, it follows that $\spt (\mathcal C) \subset B_{R_{\mathrm{loc}}}(x)$. Since
$\spt (S^{\mathrm{in}})\subset \overline{B}_t(x)\subset B_{R_{\mathrm{loc}}}(x)$ and $T=S^{\mathrm{in}}+\mathcal C$, it follows that 
    $\spt(T)\subset B_{R_{\mathrm{loc}}}(x),$ proving the required condition. 

Next, we combine the facts that
$\spt (S^{\mathrm{in}})\cup\spt (S^{\mathrm{out}})
    \subset\spt(S)$
and
$   \spt (\mathcal C)
    \subset
    B_{\lambda_{\mathrm{loc}}}\!\left(
        \spt(\partial\bigl(S\llcorner B_t(x)\bigr))
    \right)
    \subset B_{\lambda_{\mathrm{loc}}}(\spt (S))
$ from (1),
to obtain
\[
    \spt(T)\cup\spt(U)
    \subset B_{\lambda_{\mathrm{loc}}}\!(\spt (S)).
\]
We remind the reader that $t>2\rho$ and $\lambda_{\mathrm{loc}}<\rho$. Since the support of
$\partial(S\llcorner B_t(x))$ lies on $\partial B_t(x)$, 
\[
    \spt(\mathcal C)
    \subset\mathbb R^d\setminus B_{t-\lambda_{\mathrm{loc}}}(x)
    \subset\mathbb R^d\setminus B_\rho(x).
\]
As $\spt(\mathcal C)$ and $B_\rho(x)$ are disjoint, it therefore follows that $\mathcal C\llcorner B_\rho(x)=0$. As $B_\rho(x)\subset B_t(x)$, $S^{\mathrm{out}}\llcorner B_\rho(x) =0 $, so we can conclude
that
\[
    T\llcorner B_\rho(x)
    =S^{\mathrm{in}}\llcorner B_\rho(x)
    =S\llcorner B_\rho(x),
\]
completing the proof.
\end{proof}
We now iterate the above construction to show there is a localized cycle with large filling radius.
    
\begin{lem}
\label{lem:persistent-profile}
Let
$
S_j\in\mathbf I_k(\mathbb R^d;\mathbb Z_2)
$
be compactly supported  \(k\)-cycles such that
\begin{align*}
\partial S_j=0,
\qquad
\mathbf{M}(S_j)\leq M_0, \\
\FillRad (S_j)\geq 1-\xi_j,
\qquad
\xi_j\rightarrow0.
\end{align*}
For every \(\varepsilon\in(0,1)\) and \(\zeta>0\),
there exists an
$R_{\mathrm{loc}}<\infty$
and \(j_0\in\mathbb N\) such that, for every \(j\geq j_0\), one can
choose an integral \(k\)-cycle
$
T_j\in\mathbf I_k(\mathbb R^d;\mathbb Z_2)
$
and a point \(x_j\in\mathbb R^d\) satisfying
\[
\partial T_j=0,
\qquad
\spt(T_j)\subset B_{R_{\mathrm{loc}}}(x_j),
\]
\[
\spt T_j
\subset B_\zeta(\spt S_j),
\qquad
\mathbf{M}(T_j)\leq M_0+\zeta,\qquad
\FillRad (T_j)\geq 1-\varepsilon.
\]
\end{lem}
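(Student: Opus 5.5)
We will iterate Lemma~\ref{lem:localization}, peeling off bounded pieces of $S_j$ one at a time, and then use subadditivity of the filling radius to show that one of the peeled pieces keeps almost all of it. The parameters are fixed first and independently of $j$. We set $\rho:=C$, where $C$ is the radius constant from Lemma~\ref{lem:noncollapsing} applied at scale $a=\varepsilon/2$, i.e.\ balls of radius $C\varepsilon/2$. We choose $\lambda_{\mathrm{loc}}<\min\{\rho,\zeta/2\}$ small. We choose the cap mass $m_{\mathrm{cap}}$ so small that $N\,m_{\mathrm{cap}}\le\zeta$, where
\[
N:=\left\lceil \frac{M_0+1}{c(\varepsilon/2)^k/2}\right\rceil
\]
bounds the number of iterations. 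Lemma~\ref{lem:localization} then provides $R_{\mathrm{loc}}$.

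\textbf{Iteration.} Set $U^{(0)}=S_j$. At step $i$, if $\FillRad(U^{(i)})\ge\varepsilon/2$, Lemma~\ref{lem:noncollapsing} gives a point $x_i$ with $\mathbf M(U^{(i)}\llcorner B_\rho(x_i))\ge c(\varepsilon/2)^k$. We apply Lemma~\ref{lem:localization} at $x_i$ to obtain $U^{(i)}=T^{(i)}+U^{(i+1)}$. Since $T^{(i)}\llcorner B_\rho=U^{(i)}\llcorner B_\rho$ and the cap avoids $B_\rho$, the remainder $U^{(i+1)}$ loses at least $c(\varepsilon/2)^k$ of mass inside $B_\rho(x_i)$ and gains at most $m_{\mathrm{cap}}$. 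Hence
\[
\mathbf M(U^{(i+1)})\le \mathbf M(U^{(i)})-c(\varepsilon/2)^k+m_{\mathrm{cap}},
\]
so after at most $N$ steps we reach $\FillRad(U^{(N')})<\varepsilon/2$. We need to confirm that the mass bound used for Lemma~\ref{lem:localization} is uniform along the iteration; $M_0+1$ suffices. The supports also drift by at most $N'\lambda_{\mathrm{loc}}$ in total, so $\lambda_{\mathrm{loc}}$ should be shrunk to $\zeta/N$. Each $T^{(i)}$ is supported in $B_{R_{\mathrm{loc}}}(x_i)$, lies in $B_\zeta(\spt S_j)$, and has mass at most $\mathbf M(S_j)+\zeta$, using the mass inequality from part (2) summed over the iterations.

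\textbf{Choosing the piece.} We have $S_j=\sum_i T^{(i)}+U^{(N')}$ over $\mathbb Z_2$. The key claim is that some $T^{(i)}$ has $\FillRad\ge 1-\varepsilon$. Suppose not. Each $T^{(i)}$ then has a filling $W_i$ with $\spt W_i\subset B_{1-\varepsilon}(\spt T^{(i)})$, and $U^{(N')}$ has a filling within $\varepsilon/2$ of its support. Summing gives a filling of $S_j$ with support in
\[
B_{1-\varepsilon/2+\zeta'}(\spt S_j),
\]
since every support lies within $\zeta'=N\lambda_{\mathrm{loc}}$ of $\spt S_j$. Taking $\zeta'<\varepsilon/4$ yields $\FillRad(S_j)\le 1-\varepsilon/4<1-\xi_j$ once $j\ge j_0$, which is a contradiction. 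We then take $T_j$ to be that piece and $x_j$ its center.

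\textbf{Main obstacle.} The delicate step is the filling-radius subadditivity across the decomposition. The difficulty is that the caps move supports away from $\spt S_j$, so a filling of a piece lies near the piece rather than near $S_j$. We handle this by charging the total drift $N\lambda_{\mathrm{loc}}$ against $\varepsilon$. The other technical point is ensuring that $N$, and hence $R_{\mathrm{loc}}$, is independent of $j$; this holds because all constants depend only on $d,k,M_0,\varepsilon,\zeta$. The final $R_{\mathrm{loc}}$ is taken as the one produced by Lemma~\ref{lem:localization} with these parameters.
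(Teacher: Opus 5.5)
Your proposal is correct and is essentially the paper's proof. You iterate Lemma~\ref{lem:localization} at points supplied by Lemma~\ref{lem:noncollapsing}, bound the number of cuts by the mass decrement, control the cumulative support drift by $N\lambda_{\mathrm{loc}}$, and sum fillings of the pieces and the residual to contradict $\FillRad(S_j)\ge 1-\xi_j$. The differences are cosmetic: you decouple the localization scale from the stopping threshold, and you use the cap support bound of Lemma~\ref{lem:localization} directly instead of first replacing each cap by a mass-minimizing one, which the paper needs only later in Lemma~\ref{claim:cover-localization-caps}. Two small fixes: $\rho:=C$ should read $\rho:=C\varepsilon/2$, and you should state $m_{\mathrm{cap}}\le c(\varepsilon/2)^k/2$ explicitly, since your bound $N$ on the number of cuts relies on it.
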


\begin{proof}
Fix \(\varepsilon\in(0,1)\) and \(\zeta>0\), and set
$\rho:=\frac{\varepsilon}{8}.$
By Lemma \ref{lem:noncollapsing}, there  exists an
$
m_\rho:=c_{d,k}\rho^k>0$, 
such that every compactly supported integral \(k\)-cycle \(U\) with $
\FillRad (U)>\rho$
admits a point \(x\in\mathbb R^d\) satisfying
$
\mathbf{M}
\bigl(U\llcorner B_\rho(x)\bigr)
\geq m_\rho.
$
Choose an integer
\[
q_*>
\frac{2M_0}{m_\rho},
\]
then choose \(0<\lambda_{\mathrm{loc}}<\rho\) and \(m_{\mathrm{cap}}>0\) sufficiently small that
\[
q_*\lambda_{\mathrm{loc}}
<
\min\left\{\zeta,\frac{\varepsilon}{8}\right\},\qquad
m_{\mathrm{cap}}<\frac{m_\rho}{2},
\qquad
q_*m_{\mathrm{cap}}<\zeta,\qquad
A_{d,k}\left(\frac{m_{\mathrm{cap}}}{2}\right)^{1/k}<\lambda_{\mathrm{loc}};
\]
where \(A_{d,k}>0\) is a dimensional constant in the support
estimate for mass-minimizing caps obtained from monotonicity below \eqref{eq:mass_min}.

Apply Lemma~\ref{lem:localization} with mass bound \(M_0+\zeta\),
localization scale \(\rho\), support error \(\lambda_{\mathrm{loc}}\), and mass
error \(m_{\mathrm{cap}}\). Denote the resulting localization radius by
$
R_{\mathrm{loc}}$.
For a fixed \(j\), set
$
U_j^0:=S_j$.
We inductively construct bounded profiles \(T_j^\ell\) and residual
cycles \(U_j^\ell\). The stopping criterion is that the residual cycles have a small filling radius, i.e.
$
\FillRad (U_j^{\ell-1})\leq\rho.$
Otherwise, Lemma \ref{lem:noncollapsing} provides a point 
\(x_j^\ell\in\mathbb R^d\) such that
\[
\mathbf{M}
\bigl(
U_j^{\ell-1}\llcorner B_\rho(x_j^\ell)
\bigr)
\geq m_\rho,
\]
Lemma~\ref{lem:localization}, applied to \(U_j^{\ell-1}\) at this
\(x_j^\ell\), gives a cutting radius
\[
t_j^\ell\in(2\rho,R_{\mathrm{loc}}-\lambda_{\mathrm{loc}})
\]
and a candidate cap \(\widetilde{\mathcal C}_j^\ell\) such that, writing
\[
Z_j^\ell
:=\partial\bigl(U_j^{\ell-1}\llcorner B_{t_j^\ell}(x_j^\ell)\bigr),
\]
we have
\[
\partial\widetilde{\mathcal C}_j^\ell=Z_j^\ell,
\qquad
\mathbf{M}(\widetilde{\mathcal C}_j^\ell)\leq\frac{m_{\mathrm{cap}}}{2}.
\]
Before defining the profile and the next residual, we replace the
candidate cap by a mass-minimizing cap with the same boundary.
If $Z_j^\ell=0$, take $\mathcal C_j^\ell=0$; all the required
mass and support bounds then hold trivially. Otherwise, consider
a minimizing sequence of compactly supported integral $k$-currents
with boundary $Z_j^\ell$.
The candidate $\widetilde{\mathcal C}_j^\ell$ bounds the infimum.
Nearest-point projection onto the compact convex hull of
$\spt(Z_j^\ell)$ confines the minimizing sequence to a fixed
compact set without increasing mass or changing its boundary.
Compactness and rectifiability
\cite[Appendix, Lemma~A.2 and Theorem~A.4]{White2009},
together with lower semicontinuity of mass, therefore give a
minimizer $\mathcal C_j^\ell$ satisfying
\[
\partial\mathcal C_j^\ell=Z_j^\ell,
\qquad
\mathbf M(\mathcal C_j^\ell)
\leq \mathbf M(\widetilde{\mathcal C}_j^\ell)
\leq \frac{m_{\mathrm{cap}}}{2}.
\]
 Since $\mathcal C_j^\ell$ is mass minimizing, away from $\spt(Z_j^\ell)$, the monotonicity formula
\cite[Chapter~4, Section~3, (3.6)--(3.10)]{SimonGMT}
 implies that
$\frac{\mathbf M(\mathcal C_j^\ell\llcorner B_r(y))}
     {\omega_k r^k}$
is nondecreasing for
$0<r<\dist(y,\spt(Z_j^\ell))$, where $\omega_k$ is the volume
of the unit $k$-ball. Since $\mathcal C_j^\ell$ must have density 1 almost everywhere, for every $y\in\spt(\mathcal C_j^\ell)$
\begin{equation}\label{eq:mass_min}
\dist(y,\spt(Z_j^\ell))
\leq A_{d,k}\mathbf M(\mathcal C_j^\ell)^{1/k}.
\end{equation}
The mass bound and the choice of $m_{\mathrm{cap}}$ imply
\[
A_{d,k}\mathbf M(\mathcal C_j^\ell)^{1/k}
\leq A_{d,k}\left(\frac{m_{\mathrm{cap}}}{2}\right)^{1/k}
<\lambda_{\mathrm{loc}}.
\]
Therefore, 
$
\spt(\mathcal C_j^\ell)
\subset
B_{\lambda_{\mathrm{loc}}}\!(\spt(Z_j^\ell)).
$
Finally, the slice satisfies
$
\spt(Z_j^\ell)
\subset
\spt(U_j^{\ell-1})
\cap\partial B_{t_j^\ell}(x_j^\ell),
$
 so the cap lies within $\lambda_{\mathrm{loc}}$ of the
support of the previous residual and of the cutting sphere. 
Since
$
t_j^\ell-\lambda_{\mathrm{loc}}>\rho$ (by \(0<\lambda_{\mathrm{loc}}<\rho\))  and 
$
t_j^\ell+\lambda_{\mathrm{loc}}<R_{\mathrm{loc}}$,
we conclude that
\[
\spt(\mathcal C_j^\ell)
\subset
B_{R_{\mathrm{loc}}}(x_j^\ell)\setminus B_\rho(x_j^\ell),
\qquad
\spt(\mathcal C_j^\ell)
\subset
B_{\lambda_{\mathrm{loc}}}(\spt(U_j^{\ell-1})).
\]

Using this same minimizing cap on both sides of the cut, define
\begin{align*}
T_j^\ell
&:=U_j^{\ell-1}\llcorner B_{t_j^\ell}(x_j^\ell)+\mathcal C_j^\ell,\\
U_j^\ell
&:=\bigl(U_j^{\ell-1}
-U_j^{\ell-1}\llcorner B_{t_j^\ell}(x_j^\ell)\bigr)+\mathcal C_j^\ell.
\end{align*}
These are integral \(k\)-cycles, and the cancellation of  \(\mathcal C_j^\ell\) over \(\mathbb Z_2\) gives
$U_j^{\ell-1}=T_j^\ell+U_j^\ell.$
Moreover,
\begin{align*}
\mathbf{M}(T_j^\ell)
+\mathbf{M}(U_j^\ell)
&\leq\mathbf{M}(U_j^{\ell-1})
+2\mathbf{M}(\mathcal C_j^\ell)\\
&\leq\mathbf{M}(U_j^{\ell-1})+m_{\mathrm{cap}},
\end{align*}
along with 
\[
\spt (T_j^\ell)\subset B_{R_{\mathrm{loc}}}(x_j^\ell),
\qquad
\spt (T_j^\ell)\cup\spt (U_j^\ell)
\subset
B_{\lambda_{\mathrm{loc}}}\!(\spt (U_j^{\ell-1})),
\]
\[
T_j^\ell\llcorner B_\rho(x_j^\ell)
=U_j^{\ell-1}\llcorner B_\rho(x_j^\ell).
\]
The next cut is applied to this residual \(U_j^\ell\).
In particular,
$
\mathbf{M}(T_j^\ell)\geq m_\rho,
$
so that 
\begin{align*}
\mathbf{M}(U_j^\ell)
&\leq
\mathbf{M}(U_j^{\ell-1})
-\mathbf{M}(T_j^\ell)+m_{\mathrm{cap}} \\
&\leq
\mathbf{M}(U_j^{\ell-1})
-m_\rho+m_{\mathrm{cap}} \\
&\leq
\mathbf{M}(U_j^{\ell-1})
-\frac{m_\rho}{2}.
\end{align*}
After \(q\) such cuts, 
\[
\mathbf{M}(U_j^q)
\leq
M_0-\frac{q\,m_\rho}{2}.
\]
As we have chosen $q_*>\frac{2M_0}{m_\rho}$, we conclude that the construction must stop after at most 
$q_j < q_*$ cuts. 
At the stopping time,
$
\FillRad (U_j^{q_j})\leq\rho$
and summing all the previous steps gives
\[
S_j
=
\sum_{\ell=1}^{q_j}T_j^\ell
+
U_j^{q_j}.
\]
As all localization caps \(\mathcal C_j^1,\ldots,\mathcal C_j^{q_j}\) used in this
construction are mass minimizing with their respective boundaries
fixed, we have 
\[
\sum_{\ell=1}^{q_j}\mathbf{M}(\mathcal C_j^\ell)
\leq\frac{q_jm_{\mathrm{cap}}}{2}<\frac{\zeta}{2}.
\]
Summing the mass estimates over all cuts also yields
\[
\sum_{\ell=1}^{q_j}
\mathbf{M}(T_j^\ell)
+
\mathbf{M}(U_j^{q_j})
\leq
\mathbf{M}(S_j)+q_jm_{\mathrm{cap}}
<
M_0+\zeta.
\]
In particular,
$
\mathbf{M}(T_j^\ell)<M_0+\zeta$
for every \(\ell\).
Iterating the support estimates
\[
\spt (U_j^\ell)
\subset
B_{\ell\lambda_{\mathrm{loc}}}\!(\spt (S_j)).
\]
Combining this with $\spt (T_j^\ell)\cup\spt (U_j^\ell)
\subset
B_{\lambda_{\mathrm{loc}}}\!(\spt (U_j^{\ell-1}))$ gives
\[
\spt (T_j^\ell)
\subset
B_{\ell\lambda_{\mathrm{loc}}}\!(\spt (S_j)).
\]
Since \(\ell\leq q_j<q_*\), it follows that
\[
\spt (T_j^\ell)
\cup
\spt (U_j^{q_j})
\subset
B_{q_*\lambda_{\mathrm{loc}}}\!(\spt (S_j))
\subset
B_\zeta(\spt (S_j)).
\]
Hence, after fewer than \(q_*\) localization steps, we have decomposed
\[
S_j=\sum_{\ell=1}^{q_j}T_j^\ell+U_j^{q_j},
\qquad q_j<q_*,
\]
where each profile \(T_j^\ell\) is supported in a ball \(B_{R_{\mathrm{loc}}}(x_j^\ell)\), all profiles and the terminal residual remain in \(B_\zeta(\spt(S_j))\) and
$\FillRad (U_j^{q_j})\le\rho.$

It remains to show that at least one of the profiles \(T_j^\ell\) retains filling radius at least \(1-\varepsilon\). Suppose that
$
\FillRad (T_j^\ell)<1-\varepsilon
$ 
for every \(\ell=1,\ldots,q_j\). For each \(\ell\),  we can choose an integral
\((k+1)\)-current \(V_j^\ell\) such that
\[
\partial V_j^\ell=T_j^\ell, \qquad
\spt V_j^\ell
\subset
B_{1-\varepsilon}(\spt (T_j^\ell)).
\]
Likewise, since 
$
\FillRad (U_j^{q_j})\leq\rho,
$
there also exists an integral \((k+1)\)-current \(W_j\) such that
\[
\partial W_j=U_j^{q_j}, \qquad \spt (W_j)
\subset
B_{\rho+\varepsilon/8}
(\spt (U_j^{q_j})).
\]
We now define
$
V_j:=\sum_{\ell=1}^{q_j}V_j^\ell+W_j
$
and observe that 
$
\partial V_j
=
\sum_{\ell=1}^{q_j}T_j^\ell+U_j^{q_j}
=
S_j.
$
Additionally,
\[
\spt (V_j^\ell)
\subset
B_{1-\varepsilon+q_*\lambda_{\mathrm{loc}}}\!
(\spt (S_j)),\quad
\spt (W_j)
\subset
B_{\rho+\varepsilon/8+q_*\lambda_{\mathrm{loc}}}\!
(\spt (S_j)).
\]
Writing $
\rho_\varepsilon
:=
\max\left\{
1-\varepsilon+q_*\lambda_{\mathrm{loc}},\,
\rho+\frac{\varepsilon}{8}+q_*\lambda_{\mathrm{loc}}
\right\},
$
we obtain $
\spt (V_j)
\subset
B_{\rho_\varepsilon}(\spt (S_j)).
$

Since $
q_*\lambda_{\mathrm{loc}}<\frac{\varepsilon}{8}$ and $
\rho=\frac{\varepsilon}{8},$
we have
$
\rho_\varepsilon
<
\max\left\{
1-\frac{7\varepsilon}{8},\,
\frac{3\varepsilon}{8}
\right\}
<1.
$
Furthermore, since $
\spt (V_j)
\subset
B_{\rho_\varepsilon}(\spt S_j)
$, we have 
$\FillRad (S_j)\leq\rho_\varepsilon.
$
As \(\xi_j\to0\), for all large enough $j$, 
$
1-\xi_j>\rho_\varepsilon $. Hence, $\FillRad (S_j)\leq\rho_\varepsilon
$ contradicts the  assumption
$\FillRad (S_j)\geq1-\xi_j.$

 We conclude that for every \(j\geq j_0\), we can find an $\ell_j$ such that
$
\FillRad (T_j^{\ell_j})
\geq1-\varepsilon.
$
Setting
\[
T_j:=T_j^{\ell_j},
\qquad
x_j:=x_j^{\ell_j},
\]
we have $\FillRad (T_j)\geq1-\varepsilon$ and 
by the estimates above, we have
\begin{align*}
\partial T_j=0,
\qquad&
\spt (T_j)\subset B_{R_{\mathrm{loc}}}(x_j),\\
\spt (T_j)
\subset B_\zeta(\spt (S_j)),
\qquad&
\mathbf{M}(T_j)\leq M_0+\zeta.
\end{align*}
\end{proof}

We recap where we are in the proof. Since translations preserve boundary, mass, and filling radius,
fixing \(\varepsilon\in(0,1)\) and \(\zeta>0\), and letting \(T_j\) and
\(x_j\) be those given by Lemma~\ref{lem:persistent-profile},  we can translate $T_j$ by $-x_j$ denoting the result by $\widehat{T}_j$, such that 
\[
\partial \widehat T_j=0,
\qquad
\spt ( \widehat T_j)\subset\overline{B}_{R_{\mathrm{loc}}}(0),
\qquad
\mathbf{M}(\widehat T_j)\leq M_0+\zeta.
\]
The Federer--Fleming compactness theorem  gives, after
passing to a subsequence, a current
$
T_{\varepsilon,\zeta}
\in\mathbf I_k(\mathbb R^d;\mathbb Z_2)
$
such that
$
\widehat T_j\rightarrow T_{\varepsilon,\zeta}$
where convergence is in the global flat norm.
Continuity of the boundary operator gives $
\partial T_{\varepsilon,\zeta}=0
 $ and lower semicontinuity of mass give $
\mathbf{M}(T_{\varepsilon,\zeta})
\leq
\liminf_{j\to\infty}\mathbf{M}(\widehat T_j)
\leq M_0+\zeta.
$   
Moreover, the support is controlled, 
$\spt (T_{\varepsilon,\zeta})
\subset\overline{B}_{R_{\mathrm{loc}}}(0).
$

We now show that the filling radius is preserved in the limit. 
\begin{lem}
\label{lem:fillrad-semicontinuity}
Let
$
A_j,A\in\mathbf I_k(\mathbb R^d;\mathbb Z_2)
$
be compactly supported integral \(k\)-cycles such that
$
A_j\rightarrow A$
in the global flat norm.
Then
\[
\FillRad (A)
\geq
\limsup_{j\to\infty}\FillRad (A_j).
\]
\end{lem}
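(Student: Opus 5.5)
The plan is to argue by contrapositive at the level of fillings. Fix any $\rho>\FillRad(A)$ and any $\eta>0$. I will show that for all sufficiently large $j$ the cycle $A_j$ admits a filling supported in $B_{\rho+3\eta}(\spt A_j)$. This gives $\limsup_j \FillRad(A_j)\le \rho+3\eta$, and letting $\rho\downarrow \FillRad(A)$ and $\eta\to 0$ proves the lemma.

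\emph{Step 1: supports do not escape.} First I show that $\spt(A)\subset B_\eta(\spt A_j)$ for all large $j$. For $x\in\spt(A)$ we have $\lVert A\rVert(B_{\eta/2}(x))>0$. Flat convergence gives weak convergence, and $\lVert\cdot\rVert$ is lower semicontinuous on open sets, so $\lVert A_j\rVert(B_{\eta/2}(x))>0$ for large $j$. Since $\spt(A)$ is compact, a finite cover by such balls makes this uniform in $x$. Consequently, if $W$ is a $(k+1)$-current with $\partial W=A$ and $\spt W\subset B_\rho(\spt A)$, then $\spt W\subset B_{\rho+\eta}(\spt A_j)$ for large $j$.

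\emph{Step 2: localized flat decomposition.} By flat convergence, write $A_j-A=R_j+\partial S_j$ with $\mathbf M(R_j)+\mathbf M(S_j)\to 0$, where $R_j,S_j$ are integral over $\mathbb Z_2$. The obstacle is that $R_j$ and $S_j$ may be supported far from $\spt A_j$, so I cut them off.
\begin{itemize}
\item Let $u(y):=\dist(y,\spt A_j\cup\spt A)$. By the slicing inequality, as in the proof of Lemma~\ref{lem:localization}, there is $s\in(\eta,2\eta)$ such that $N:=\{u<s\}$ satisfies $\partial(S_j\llcorner N)=(\partial S_j)\llcorner N+P_j$, where $P_j=\pm\langle S_j,u,s\rangle$ and $\mathbf M(P_j)\le \mathbf M(S_j)/\eta$.
\item Since $A_j-A$ is supported in $N$, restricting the decomposition to $N$ gives
\[
(\partial S_j)\llcorner N=A_j-A-R_j\llcorner N .
\]
\item Hence $A_j=A+\partial(S_j\llcorner N)+Z_j$, where $Z_j:=R_j\llcorner N-P_j$ (signs are irrelevant over $\mathbb Z_2$).
\item $Z_j$ is a cycle, because it equals $A_j-A-\partial(S_j\llcorner N)$. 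It satisfies $\mathbf M(Z_j)\le \mathbf M(R_j)+\mathbf M(S_j)/\eta\to 0$ and $\spt Z_j\subset\overline N$.
\end{itemize}

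\emph{Step 3: fill the small error.} Apply the support-controlled Federer--Fleming isoperimetric inequality used in Lemma~\ref{lem:localization}, now to the $k$-cycle $Z_j$. It gives $Q_j$ with $\partial Q_j=Z_j$ and $\spt Q_j\subset B_{C\mathbf M(Z_j)^{1/k}}(\spt Z_j)$, where the radius tends to $0$. (For $k=0$-type degeneracies no issue arises, since $k\ge1$.) Then
\[
V_j:=W+S_j\llcorner N+Q_j
\]
satisfies $\partial V_j=A_j$. By Step 1, $\spt W\subset B_{\rho+\eta}(\spt A_j)$. Also $\spt(S_j\llcorner N)\subset\overline N\subset B_{2\eta}(\spt A_j)\cup B_{2\eta}(\spt A)\subset B_{3\eta}(\spt A_j)$, and $\spt Q_j\subset B_{3\eta+o(1)}(\spt A_j)$. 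Hence $\FillRad(A_j)\le \max\{\rho+\eta,\,3\eta+o(1)\}$.

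The hardest step is Step 2: localizing the flat-norm decomposition so that its pieces lie near $\spt A_j$, which needs a good slice radius. If the slicing bound is awkward for the non-Lipschitz-regular set $N$, I would instead push $S_j$ and $R_j$ into a neighborhood by a Lipschitz retraction with constant near $1$ defined off a thin shell. The slicing route is the one I would try first, since it already appears in the proof of Lemma~\ref{lem:localization}.
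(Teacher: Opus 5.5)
Your argument is correct, but it localizes the small error differently from the paper. Your Step 1 is the same as the paper's one-sided support convergence, $\sup_{x\in\spt A}\dist(x,\spt A_j)\to0$. Both proofs also finish the same way, by adding a filling of $A$ to a small filling of $A_j-A$.

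The difference is in how that small filling is kept close to the supports. The paper writes $A_j-A=U_j+\partial F_j$ and fills $U_j$ with the unlocalized isoperimetric inequality, which gives some filling of $A_j-A$ with small mass. It then replaces this by a mass-minimizing $Q_j$ with $\partial Q_j=A_j-A$. The monotonicity formula and the lower density bound then give $\spt Q_j\subset B_{a_j}(\spt A_j\cup\spt A)$ with $a_j\to0$.

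You instead cut the flat decomposition along a good slice of $u=\dist(\cdot,\spt A_j\cup\spt A)$ at a level $s\in(\eta,2\eta)$. This turns the error into a cycle $Z_j=R_j\llcorner N-P_j$ supported in $\overline N$, with mass at most $\mathbf M(R_j)+\mathbf M(S_j)/\eta$. You then fill $Z_j$ with the support-controlled deformation-theorem isoperimetric inequality, the same one the paper uses in Corollary~\ref{cor:sharp-fillrad-current}.

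Your identities check out:
\begin{itemize}
\item Since $\spt(A_j-A)\subset\{u=0\}\subset N$, restricting $\partial S_j=A_j-A-R_j$ to $N$ gives exactly the relation you need.
\item $S_j$ is normal, because $\mathbf M(\partial S_j)\le\mathbf M(A_j)+\mathbf M(A)+\mathbf M(R_j)<\infty$. So the slice identity and the averaging bound hold for a.e.\ $s$.
\item Slicing by a Lipschitz function needs no regularity of $N$, so your fallback via retractions is unnecessary.
\end{itemize}

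Your route avoids two pieces of machinery: the existence of $\mathbb Z_2$ mass-minimizers with prescribed boundary, and the monotonicity and density estimates. It uses only slicing and the deformation theorem, both already used elsewhere in the paper. The price is the auxiliary scale $\eta$ and a $3\eta$ loss that must be sent to zero at the end. The paper's minimizer gives support control at a scale $a_j\to0$ directly, with no extra parameter.
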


\begin{proof}
Set
$D_j:=A_j-A.$
Since \(A_j\to A\) in the global flat norm, there exist
$
U_j\in\mathbf I_k(\mathbb R^d;\mathbb Z_2)
$ and  $
F_j\in\mathbf I_{k+1}(\mathbb R^d;\mathbb Z_2),$
such that $
D_j=U_j+\partial F_j$
with
\[
\mathbf{M}(U_j)
+
\mathbf{M}(F_j)
\leq
\flatnorm{D_j}+\frac1j
\rightarrow0,
\]
where $\flatnorm{\cdot}$ denotes the global flat norm. 
Because \(D_j\) is a cycle,
$
\partial U_j=0.
$
By the Euclidean isoperimetric inequality, there exists
$
C_j\in\mathbf I_{k+1}(\mathbb R^d;\mathbb Z_2)
$
such that
$
\partial C_j=U_j
$  with 
\[
\mathbf{M}(C_j)
\leq
C_{d,k}\mathbf{M}(U_j)^{(k+1)/k}.
\]
Defining 
$
E_j:=F_j+C_j
$, we observe that 
$
\partial E_j=D_j
$
and so 
\[
\mathbf{M}(E_j)
\leq
\mathbf{M}(F_j)
+
C_{d,k}\mathbf{M}(U_j)^{(k+1)/k}
\rightarrow0.
\]
Now let \(Q_j\) be a mass-minimizing integral \((k+1)\)-current
with
$
\partial Q_j=D_j.
$
Then by definition, 
$
\mathbf{M}(Q_j)
\leq
\mathbf{M}(E_j)
\rightarrow0.
$
Next, we claim that
\[
\spt (Q_j)
\subset
B_{a_j}(\spt (D_j))
\]
for some \(a_j\to0\). Let
$
y\in\spt (Q_j)
$
and set
$
r_y:=
\dist
\bigl(y,\spt (D_j)\bigr).$ 
If \(r_y>0\), then \(Q_j\) is locally mass minimizing in \(B_{r_y}(y)\). For every \(0<s<r_y\), 
the monotonicity formula (as in Lemma ~\ref{lem:persistent-profile}, see \cite[Section~12, equation~(12.2)]{AmbrosioKatz2011}) and the positive density of an integral
current give
\[
\mathbf{M}
\bigl(Q_j\llcorner B_s(y)\bigr)
\geq
c_{d,k}s^{k+1}
\]
Taking \(s=r_y/2\), 
\begin{align*}
\mathbf{M}(Q_j)
\geq
c_{d,k}\left(\frac{r_y}{2}\right)^{k+1}
\quad \Rightarrow \quad
r_y
\leq
2c_{d,k}^{-1/(k+1)}
\mathbf{M}(Q_j)^{1/(k+1)}.
\end{align*}
Choosing 
$
a_j>
2c_{d,k}^{-1/(k+1)}
\mathbf{M}(Q_j)^{1/(k+1)}
$ with $a_j \to 0$, we obtain
\[
\spt (Q_j)
\subset
B_{a_j}(\spt (D_j)).
\]
We next show the one-sided convergence of supports:
\[
e_j:=
\sup_{x\in\spt (A)}
\dist (x,\spt (A_j))
\rightarrow0.
\]
Suppose that this is not the case. After passing to a subsequence, there would exist
\(\theta>0\) and points \(x_j\in\spt (A)\) such that
$
B_\theta(x_j)\cap\spt (A_j)=\emptyset.
$
Since \(\spt (A)\) is compact, we can pass to a further subsequence such that
$
x_j\rightarrow x\in\spt (A),
$
so for all sufficiently large \(j\),
$
A_j\llcorner B_{\theta/2}(x)=0.
$
Since flat convergence implies weak convergence, it follows that 
$
A\llcorner B_{\theta/2}(x)=0,
$
contradicting \(x\in\spt (A)\) so 
\(e_j\to0\).

Now assume  that \(A\neq0\), and let
$
r>\FillRad (A)
$. 
By the definition of filling radius, there exists a
$
V\in\mathbf I_{k+1}(\mathbb R^d;\mathbb Z_2)
$
such that
$
\partial V=A
$
and
$
\spt (V) \subset
B_r(\spt (A))$.
Since
$\spt (A)
\subset
\overline{B}_{e_j}(\spt (A_j))
$,
we have
$
\spt (V)
\subset
B   _{r+e_j}(\spt (A_j)).
$

As we can write
$
\spt (D_j)
\subset
\spt (A_j)\cup\spt (A),
$
it follows that 
$
\spt (Q_j)
\subset
B_{a_j+e_j}(\spt (A_j)).
$
The current
$
V_j:=V+Q_j
$
satisfies
$
\partial V_j
=
A+D_j
=
A_j
$
and its support is contained in
$
B_{\rho_j}(\spt (A_j)),
$
where
$
\rho_j
:=
\max\left\{
r+e_j,\,
a_j+e_j
\right\}.
$
This implies that
$
\FillRad (A_j)
\leq
\rho_j.
$
Since \(a_j,e_j\to0\),
\[
\limsup_{j\to\infty}\FillRad (A_j)
\leq r.
\]
Letting \(r\to \FillRad (A)\) from above gives
\[
\limsup_{j\to\infty}\FillRad (A_j)
\leq
\FillRad (A).
\]
Finally, if \(A=0\), then \(D_j=A_j\), and \(Q_j\) itself is a filling
of \(A_j\) satisfying
$
\spt(Q_j)
\subset
B_{a_j}(\spt (A_j)).
$
Hence
\[
\FillRad (A_j)
\leq a_j\rightarrow0
=
\FillRad (A).
\]
completing the proof.
\end{proof}
Applying Lemma~\ref{lem:fillrad-semicontinuity} to the convergence obtained above, i.e., 
$ \widehat T_j\rightarrow T_{\varepsilon,\zeta},$ gives
\[
\FillRad (T_{\varepsilon,\zeta})
\geq
\limsup_{j\to\infty}
\FillRad (\widehat T_j)
\geq
1-\varepsilon.
\]
This ensures that the localized cycles retain large filling radius
in the limit. However, the localization procedure introduces caps that may increase the covering number.
 We next show that, after slightly enlarging the
covering radius, the covering centers provided  by Lemma~\ref{lem:separated-net-regularization} may be augmented in such a way that the additional centers needed to cover these caps can be controlled by their total mass.

\begin{lem}
\label{claim:cover-localization-caps}
Suppose that $Y_j\subset \R^d$ are finite sets covering $S_j$, i.e. 
\[
\spt(S_j)
\subset
B_{h_j}(Y_j),
\qquad h_j\rightarrow0,
\]
and that the profile \(T_j\) is obtained from \(S_j\) using at most
\(q_*\) localization cuts. Denote the corresponding mass-minimizing
localization caps by
$
\mathcal C_j^1,\ldots,\mathcal C_j^{q_j},$  for $q_j\le q_*,
$
and suppose that
\[
\sum_{\ell=1}^{q_j}
\mathbf{M}(\mathcal C_j^\ell)
\le m_{\mathrm{tot}}.
\]
Then, for every \(\vartheta\in(0,1)\), there exists a finite set
\(W_j\subset\mathbb R^d\) such that
\begin{align*}
\spt(T_j)
\subset  B_{\Lambda_\vartheta h_j}(Y_j&\cup W_j),
\qquad
\Lambda_\vartheta:=1+q_*\vartheta
\\
h_j^k\card{W_j}
&\le
C_{d,k}\vartheta^{-k}m_{\mathrm{tot}}.
\end{align*}
\end{lem}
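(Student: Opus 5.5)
The plan is to split $\spt(T_j)$ into the part coming from $S_j$ and the part coming from the caps, and to cover the cap part by a greedy (Vitali-type) selection whose cardinality is controlled by monotonicity of the mass-minimizing caps. By construction in Lemma~\ref{lem:persistent-profile}, $T_j$ is $U_j^{\ell-1}\llcorner B_t+\mathcal C_j^\ell$ for some $\ell$. Unwinding the recursion, every point of $\spt(T_j)$ lies either in $\spt(S_j)$ or in $\bigcup_{\ell\le q_j}\spt(\mathcal C_j^\ell)$. Points of $\spt(S_j)$ are already covered by $B_{h_j}(Y_j)\subset B_{\Lambda_\vartheta h_j}(Y_j)$, so only cap points need new centers.

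For the caps, fix $\ell$ and use that $\mathcal C_j^\ell$ is mass minimizing with boundary $Z_j^\ell$ supported on $\spt(U_j^{\ell-1})\cap\partial B_{t_j^\ell}$. Split $\spt(\mathcal C_j^\ell)$ into two sets. The first consists of points $y$ with $\dist(y,\spt Z_j^\ell)\le 2\vartheta h_j$. The second consists of the remaining points, which are at distance more than $2\vartheta h_j$ from the boundary. For points in the second set, the monotonicity formula (as in \eqref{eq:mass_min}) with density at least $1$ gives $\mathbf M(\mathcal C_j^\ell\llcorner B_{\vartheta h_j}(y))\ge \omega_k(\vartheta h_j)^k$. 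Take a maximal $2\vartheta h_j$-separated subset $W_j^\ell$ of this second set. The balls $B_{\vartheta h_j}(w)$, $w\in W_j^\ell$, are disjoint, so $\card{W_j^\ell}\,\omega_k\vartheta^k h_j^k\le \mathbf M(\mathcal C_j^\ell)$. Maximality gives coverage of the second set within radius $2\vartheta h_j\le \Lambda_\vartheta h_j$ once we adjust constants (replace $\vartheta$ by $\vartheta/2$, which is absorbed in $C_{d,k}$). Summing over $\ell$ gives $h_j^k\card{W_j}\le C_{d,k}\vartheta^{-k}m_{\mathrm{tot}}$ with $W_j=\bigcup_\ell W_j^\ell$.

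The first set, the points of the cap near its boundary, is where the factor $\Lambda_\vartheta=1+q_*\vartheta$ enters. Each such point lies within $2\vartheta h_j$ of $\spt(Z_j^\ell)\subset\spt(U_j^{\ell-1})$. However, $U_j^{\ell-1}$ may itself contain earlier caps rather than $S_j$. We therefore argue by induction on $\ell$ that $\spt(U_j^{\ell})$, minus the parts covered by $W_j^{1},\dots,W_j^{\ell}$, lies in $B_{(1+c\ell\vartheta)h_j}(Y_j\cup W_j)$. Each cut adds at most $O(\vartheta h_j)$ to the covering radius, since a point within $2\vartheta h_j$ of a point covered at radius $(1+c(\ell-1)\vartheta)h_j$ is covered at radius $(1+c\ell\vartheta)h_j$. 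After at most $q_*$ steps the radius is $(1+Cq_*\vartheta)h_j$, and rescaling $\vartheta$ by a dimensional constant gives $\Lambda_\vartheta$.

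The main obstacle is this bookkeeping step. We must ensure that points of $\spt(Z_j^\ell)$ coming from earlier caps are themselves covered, either by $W_j$ at the appropriate radius or inductively, without the radius blowing up multiplicatively. We also need the monotonicity lower bound to hold exactly at scale $\vartheta h_j$ away from the boundary, which requires $\vartheta h_j$ to be smaller than the distance to the boundary. That is why the near-boundary zone is handled separately.
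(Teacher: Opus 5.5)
Your proposal is correct and follows the paper's proof essentially step for step. The paper also inducts on $K_j^\ell=K_j^{\ell-1}\cup\spt(\mathcal C_j^\ell)$, covers each cap's boundary layer by the previously built centers at an additive cost of $\vartheta h_j$ per cut, and covers the cap interior by a maximal separated set whose size is bounded by the monotonicity lower bound on disjoint balls. The only differences are cosmetic: the paper uses boundary threshold $\vartheta h_j$ and separation $\vartheta h_j/2$, so it gets $\Lambda_\vartheta$ directly without rescaling $\vartheta$.
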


\begin{proof}
Let
$
K_j^0:=\spt (S_j)$
and inductively, set
$
K_j^\ell
:=
K_j^{\ell-1}\cup\spt (\mathcal C_j^\ell)
$. 
The boundary of the \(\ell\)-th cap is the slice produced by cutting
the current available at the \(\ell\)-th localization step. Hence
\[
\spt (\partial \mathcal C_j^\ell)
\subset K_j^{\ell-1}.
\]
For the selected profile $T_j$, we also have $
\spt (T_j)\subset K_j^{q_j}.$
The goal is to construct finite sets \(W_j^\ell\) iteratively so that the union of balls covers $K_j^\ell$, i.e. 
\[
K_j^\ell
\subset
B_{(1+\ell\vartheta)h_j}(Y_j\cup W_j^1\cup\cdots\cup W_j^\ell).
\]
Fix \(\ell\), and suppose the inclusion holds for $\ell-1$. Decompose the support of the cap into the area close to its boundary
 and the remaining interior.
Denoting the interior by \[
I_j^\ell
:=
\left\{
x\in\spt \mathcal C_j^\ell:
\dist
\bigl(x,\spt (\partial \mathcal C_j^\ell)\bigr)
\ge\vartheta h_j
\right\}
\]
 we obtain the decomposition
\[
\spt (\mathcal C_j^\ell)
=
\left\{
x\in \spt (\mathcal C_j^\ell):
\dist
\bigl(x,\spt (\partial \mathcal C_j^\ell)\bigr)
<\vartheta h_j
\right\}
\cup I_j^\ell,
\]
Since
$
\spt \partial (\mathcal C_j^\ell)
\subset K_j^{\ell-1},
$
the  assumption at $\ell-1$ implies that the boundary layer is covered by
balls of radius
$(1+\ell\vartheta)h_j$
centered at
$
Y_j\cup W_j^1\cup\cdots\cup W_j^{\ell-1}.
$
Choosing \(W_j^\ell\subset I_j^\ell\) to be a maximal
\(\vartheta h_j/2\)-separated set, we obtain
\[
I_j^\ell
\subset
B_{\vartheta h_j/2}(W_j^\ell).
\]
As the set is maximally separated, the balls
$
B_{\vartheta h_j/4}(z),$ are pairwise disjoint  for all $ z\in W_j^\ell$.
Furthermore, each such ball is disjoint from
\(\spt( \partial \mathcal C_j^\ell)\). Since \(\mathcal C_j^\ell\) is
mass minimizing, the monotonicity formula gives
\[
\mathbf{M}
\bigl(
\mathcal C_j^\ell\llcorner B_{\vartheta h_j/4}(z)
\bigr)
\ge
c_{d,k}(\vartheta h_j)^k
\]
and summing over \(z\in W_j^\ell\), we obtain
$
c_{d,k}(\vartheta h_j)^k\card{ W_j^\ell}
\le
\mathbf{M}(\mathcal C_j^\ell),
$ 
or equivalently, 
\[
h_j^k\card{ W_j^\ell}
\le
C_{d,k}\vartheta^{-k}
\mathbf{M}(\mathcal C_j^\ell).
\]
This proves the covering bound at each step. Define
$W_j
:=
\bigcup_{\ell=1}^{q_j}W_j^\ell,$
and
since \(q_j\le q_*\),
\begin{align*}
h_j^k \card{W_j}
&\le
\sum_{\ell=1}^{q_j}h_j^k\card{W_j^\ell}\\
&\le
C_{d,k}\vartheta^{-k}
\sum_{\ell=1}^{q_j}
\mathbf{M}(\mathcal C_j^\ell)\le
C_{d,k}\vartheta^{-k}m_{\mathrm{tot}}.
\end{align*}
Finally using  \(q_j\le q_*\) again, we obtain
\[
\spt (T_j)
\subset
B_{(1+q_*\vartheta)h_j}(Y_j\cup W_j).
\]
completing the proof.
\end{proof}

We now relate these covers to the mass of the limiting
current. Since both the cycles and the covering scales vary with $j$,
the covering asymptotic in Theorem~\ref{thm:covering} does not apply directly.
We therefore show that under global flat convergence with supports
in a common compact set, the normalized covering number
is bounded below by the mass of the limit, with the same sharp
constant $\coverconst$.
\begin{lem}\label{lem:varying-cover}
Let \(T_j,T\in \mathbf I_k(\mathbb R^d;\mathbb Z_2)\) be compactly
supported \(k\)-cycles such that
$
T_j\rightarrow T$
in the global flat norm.
Suppose that
\[
\spt (T)\cup\bigcup_j\spt (T_j)
\subset K
\]
for some compact set \(K\subset\mathbb R^d\). Let \(h_j\to0\), and
let \(Y_j\subset\mathbb R^d\) be finite sets satisfying
\[
\spt (T_j)
\subset
B_{h_j}(Y_j).
\]
Then $
\liminf_{j\to\infty}h_j^k\card{Y_j}
\ge
\coverconst\,\mathbf{M}(T).
$
\end{lem}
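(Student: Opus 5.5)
The plan is a blow-up argument at density points of $T$. I will show that, inside small disjoint balls where $T$ is nearly a flat $k$-disk, the supports $\spt(T_j)$ must project onto almost all of that disk. I then count covering balls disk by disk, using the sharp constant of Theorem~\ref{thm:covering} applied to flat $k$-disks.

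\textbf{Step 1 (structure of $T$ and choice of balls).} Since $T\in\mathbf I_k(\mathbb R^d;\mathbb Z_2)$ is rectifiable with multiplicity one, its mass measure is $\|T\|=\mathcal H^k\llcorner E$ for a countably $k$-rectifiable set $E$. For $\mathcal H^k$-a.e.\ $x\in E$ there is an approximate tangent plane $P_x$, and the rescalings $(\tau_{x,r})_\#T$ converge in flat norm to the unit disk in $P_x$ as $r\to 0$. I fix $\kappa>0$. By a Vitali/Besicovitch covering argument I choose finitely many pairwise disjoint closed balls $\overline B_{r_i}(x_i)$, separated by a positive distance. In each ball the rescaled $T$ is $\kappa$-close in flat norm to the flat disk $D_i=P_{x_i}\cap B_{r_i}(x_i)$. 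Together the balls carry $\sum_i \omega_k r_i^k\ge(1-\kappa)\mathbf M(T)$. Because the balls are separated and $h_j\to0$, for large $j$ each $y\in Y_j$ can be within $h_j$ of $\spt(T_j)\cap\overline B_{r_i}(x_i)$ for at most one $i$. Hence $\card{Y_j}\ge\sum_i \card{Y_j^{(i)}}$, where $Y_j^{(i)}$ denotes the centers needed for the $i$-th ball.

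\textbf{Step 2 (projection covers most of each disk; main obstacle).} Let $p_i$ be the orthogonal projection onto $P_{x_i}$. Using the slicing inequality, as in Lemma~\ref{lem:localization}, I pick a radius $t_i\in(r_i(1-\kappa),r_i)$ at which $T_j\llcorner B_{t_i}(x_i)$ is integral with controlled boundary mass. Flat convergence $T_j\to T$, together with the flat closeness of $T$ to $D_i$, then gives that $(p_i)_\#(T_j\llcorner B_{t_i}(x_i))$ is a $k$-current in the $k$-plane $P_{x_i}$. This current is flat-close to $D_i$, and its boundary lies near $\partial B_{t_i}$, outside a slightly shrunken disk $D_i'$. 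By the constancy theorem mod $2$ in top dimension, the projected current equals $\lambda[D_i']$ with $\lambda\in\{0,1\}$ on $D_i'$. Flat closeness forces $\lambda=1$, except on a set of $\mathcal H^k$-measure $O(\kappa)r_i^k$. Consequently $p_i(\spt T_j)$ contains a subset $G_{ij}\subset D_i$ with $\mathcal H^k(G_{ij})\ge(1-C\kappa)\omega_kr_i^k$. This step is the delicate one. The difficulty is that flat closeness alone does not control supports, so the argument has to be arranged so that the degree mod $2$ of the projection is what forces coverage, rather than any control of supports.

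\textbf{Step 3 (counting).} Since $p_i$ is $1$-Lipschitz, the balls $B_{h_j}(y)$, $y\in Y_j^{(i)}$, project into $k$-dimensional balls of radius $h_j$ in $P_{x_i}\cong\mathbb R^k$, and these cover $G_{ij}$. Moving centers into the set costs at most a factor $2$ in radius, and boundary effects are negligible. Theorem~\ref{thm:covering}, applied to the flat disk (with $\coverconst$ depending only on $k$), then gives $h_j^k\card{Y_j^{(i)}}\ge(1-C\kappa)\coverconst\,\omega_kr_i^k-o(1)$. A technical point is that $G_{ij}$ varies with $j$. To handle this, I use a lower bound for coverings of large-measure subsets of a fixed disk, obtained by comparing against a fixed fine grid of cubes and using superadditivity of the covering density on cubes. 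Summing over $i$ and letting $j\to\infty$ yields $\liminf_j h_j^k\card{Y_j}\ge(1-C\kappa)^2\coverconst\,\mathbf M(T)$. Letting $\kappa\to0$ completes the argument.
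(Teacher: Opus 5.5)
Your architecture matches the paper's: blow-up at multiplicity-one tangent points, projection onto the tangent plane, a mod~$2$ degree argument forcing $p_i(\spt T_j)$ to fill most of a disk, and then the sharp constant on disks. Selecting finitely many separated Vitali balls, instead of taking a weak limit of $h_j^k\sum_{y\in Y_j}\delta_y$ and differentiating against $\lVert T\rVert$, is a fine substitute.

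The genuine gap is in Step~3. You only know $\mathcal H^k(D_i\setminus G_{ij})\le C\kappa r_i^k$, with $G_{ij}$ depending on $j$, yet you need a count with the sharp constant $\coverconst$. Neither mechanism you offer gives it. Moving centers into $G_{ij}$ at twice the radius loses a factor $2^k$. A fixed grid of cubes with superadditivity does not help either, since $D_i\setminus G_{ij}$ may consist of holes at scale $h_j$ spread through every cube, so each cube faces the original problem. The missing idea is the paper's almost-cover estimate \eqref{eq:4.7}: trade a slightly larger radius for control of the holes. Let $E=D_i\setminus G_{ij}$. Take $w\in D_i$ with $\dist(w,P_{x_i}\setminus D_i)>\eta h_j$ that is not within $(1+\eta)h_j$ of $p_i(Y_j^{(i)})$. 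Then the $k$-ball of radius $\eta h_j$ about $w$ in $P_{x_i}$ lies in $E$. Hence a maximal $2\eta h_j$-separated set $W_j$ of such points satisfies $\card{W_j}\,\omega_k(\eta h_j)^k\le\mathcal H^k(E)$. Add $W_j$ and $O(h_j^{1-k})$ centers for the boundary layer, then project all centers to the nearest point of the convex disk, which costs nothing. This gives
\[
\covnum_{(1+\eta)h_j}(D_i)\le\card{Y_j^{(i)}}+C\eta^{-k}h_j^{-k}\,\mathcal H^k(E)+o(h_j^{-k}),
\]
so Theorem~\ref{thm:covering} yields $\liminf_j h_j^k\card{Y_j^{(i)}}\ge(1+\eta)^{-k}\coverconst\,\omega_k r_i^k-C\eta^{-k}\kappa r_i^k$. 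After summing over $i$, you must let $\kappa\to0$ with $\eta$ fixed, and only then let $\eta\to0$.

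Step~2 also contains a false intermediate claim, though its conclusion is recoverable. The boundary of $(p_i)_\#(T_j\llcorner B_{t_i}(x_i))$ is carried by $p_i(\spt T_j\cap\partial B_{t_i}(x_i))$, and nothing keeps that set near $P_{x_i}$. Flat convergence, with no mass bound on $T_j$ in this lemma, allows small spheres or boundaries of thin slabs anywhere in the ball. Points of the sphere near the normal directions then project deep inside the disk, so the constancy theorem on $D_i'$ is unavailable.

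Replace it by two observations. First, $(p_i)_\#(T_j\llcorner B_t(x_i))$ is a top-dimensional $\mathbb Z_2$ current in $P_{x_i}$, hence of the form $[A_j]$ with $A_j\subset p_i(\spt T_j\cap\overline{B}_t(x_i))$ up to a null set. Second, for such currents the flat norm in $\mathbb R^d$ equals the mass. To see this, push any decomposition $U+\partial F$ forward by the retraction $p_i$; then $(p_i)_\#F$ is a $(k+1)$-current in a $k$-plane, hence $0$. Thus
\[
\mathcal H^k\bigl(A_j\,\triangle\,(P_{x_i}\cap B_t(x_i))\bigr)\le\flatnorm{(T_j-T)\llcorner B_t(x_i)}+\flatnorm{T\llcorner B_t(x_i)-[P_{x_i}\cap B_t(x_i)]}.
\]
The first term tends to $0$ for a.e.\ $t$ along a subsequence with $\sum_j\flatnorm{T_j-T}<\infty$; this follows by slicing the fillings by $\dist(x_i,\cdot)$. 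The second term is at most $\kappa r_i^k$ by your choice of balls. This is exactly the parity mechanism the paper uses through the $0$-dimensional slices $\langle T_j\llcorner\Cyl_{\rho,\xi}(x),\proj_{V,x},u\rangle$.
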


\begin{proof}
Set $L:=\liminf_{j\to\infty} h_j^k\card{Y_j},$
and pass to a subsequence along which \(h_j^k \card{Y_j} \to L\) and 
  assume \(L<\infty\).
We discard every \(y\in Y_j\) for which
\(B_{h_j}(y)\cap\spt (T_j)=\emptyset\) as this only
decreases the number of centers. Since
\(\spt (T_j)\subset K\), the remaining centers belong to
the \(h_j\)-neighborhood of \(K\), and hence, for all sufficiently
large \(j\), to a fixed compact set.
Define the Dirac measures
\[
\nu_j:=h_j^k\sum_{y\in Y_j}\delta_y.
\]
The sum of masses is bounded, so after passing to a further
subsequence,
$
\nu_j\rightharpoonup\nu
$
weakly as Radon measures.
Passing to a further subsequence, we can assume
\[
\sum_{j=1}^{\infty}\flatnorm{T_j-T}<\infty,
\]
and the weak limit $\nu$ is preserved.
It is now enough to prove that for every Borel set $A$,
\[
\nu(A)\geq
\coverconst\mathbf{M}(T\llcorner A).
\]
Since all the measures are supported in a fixed compact set,
\[
\nu(\mathbb R^d)
=
\lim_{j\to\infty}\nu_j(\mathbb R^d)
=
\lim_{j\to\infty}h_j^k\card{Y_j}
\leq L.
\]
We prove the inequality by differentiation. Since \(T\) is an
integral current, let \(\lVert T\rVert\) denote the mass measure. For \(\lVert T\rVert\)-almost every \(x\) there
is an approximate tangent \(k\)-plane \(V=\Tan_T (x)\), and
\[
\lim_{r\to 0}
\frac{\lVert T\rVert(B_r(x))}{\omega_k r^k}=1.
\]
We denote the ball restricted to $V$, 
$
B_r^V(u):=B_r(u)\cap V$ for all $u\in V.$
We may also assume that \(x\) is a differentiation point of \(\nu\)
with respect to \(\lVert T\rVert\). Fix such a point \(x\).
Let \(\proj_V:\mathbb R^d\to V\) and \(\proj_{V^\perp}:\mathbb R^d\to V^\perp\) be the linear orthogonal projections, and define
\(\proj_{V,x}(z):=\proj_V(z-x)\). Fix \(\xi\in(0,1/10)\), and first define a cylinder at $x$, and the projected disk (centered at the origin)
\begin{align*}
\Cyl_{\rho,\xi}(x)
&:=
\left\{
z\in\mathbb R^d:
\lVert \proj_{V,x}(z)\rVert<\rho,\ 
\lVert \proj_{V^\perp}(z-x)\rVert<\xi\rho
\right\}\\
D_{\rho,\xi}&:=\overline{B}^V_{(1-\xi)\rho}(0).
\end{align*}
At the multiplicity-one tangent point \(x\), the rescalings of \(T\)
converge locally in the flat norm to one copy of \(V\). Therefore, slicing
 gives, for fixed $\xi$,
\begin{equation}\label{eq:4_4}
\int_{D_{\rho,\xi}}
\flatnorm{
\left\langle
T\llcorner\Cyl_{\rho,\xi}(x),\proj_{V,x},u
\right\rangle
-
[x+u]
}\,d\mathcal H^k(u)
\leq e_{\xi}(\rho)\rho^k,
\end{equation}
where \(e_{\xi}(\rho)\to0\) as \(\rho\to 0\).
By the summability of $\flatnorm{T_j-T}$ and the restriction
theorem for Lipschitz sublevel sets, for each fixed $x$ and $\xi$
we have
\[
\flatnorm{(T_j-T)\llcorner \Cyl_{\rho,\xi}(x)}\rightarrow0
\]
for almost every $\rho>0$.
For each such $\rho$, the integrated slicing inequality gives
\begin{equation}\label{eq:4.5}
\int_{D_{\rho,\xi}}
\flatnorm{
\left\langle
(T_j-T)\llcorner\Cyl_{\rho,\xi}(x),\proj_{V,x},u
\right\rangle
}\,d\mathcal H^k(u)
\rightarrow0.
\end{equation}
Let \(E_{j,\rho,\xi}\subset D_{\rho,\xi}\) be the set of all \(u\in D_{\rho,\xi}\)
for which the zero-dimensional slice
$
\left\langle
T_j\llcorner\Cyl_{\rho,\xi}(x),\proj_{V,x},u
\right\rangle
$
is undefined\footnote{We include  the exceptional null set from the slicing theorem in \(E_{j,\rho,\xi}\).} or has even parity. As the current \([x+u]\) has odd parity, for almost every 
\(u\in E_{j,\rho,\xi}\), their difference also has odd parity.

A compactly supported zero-dimensional $\mathbb{Z}_2$ current with odd
parity has flat norm at least \(1\).  That is, if
$
Z=U+\partial F,
$
then \(\partial F\) has even parity, so \(U\) has odd parity and
\(\mathbf{M}(U)\geq1\). Consequently,
$
\flatnorm{Z}\geq1.
$
Using this observation together with \eqref{eq:4_4}, \eqref{eq:4.5}, and the triangle
inequality, we obtain
\begin{equation}\label{eq:4.6}
\limsup_{j\to\infty}
\mathcal H^k(E_{j,\rho,\xi})
\leq e_{\xi}(\rho)\rho^k.
\end{equation}

We now state the elementary almost-cover estimate used below.
Suppose that \(D\subset V\) is a closed \(k\)-dimensional ball,
\(r_j\to0\), and \(A_j\subset V\) are finite sets such that
\[
D\setminus E_j
\subset
B^V_{r_j}(A_j).
\]
We claim that for every \(\eta \in(0,1)\),
\begin{equation}\label{eq:4.7}
\liminf_{j\to\infty}r_j^k\card{A_j}
\geq
(1+\eta)^{-k}\coverconst\mathcal H^k(D)
-
C_k\eta^{-k}\limsup_{j\to\infty}\mathcal H^k(E_j).
\end{equation}
To see this, first define
\[
D_j^-:=\{u\in D:\dist(u,V\setminus D)>\eta r_j\}
\]
and consider the portion of \(D_j^-\) not covered by the enlarged
balls \(B^V_{(1+\eta)r_j}(A_j)\). Choose a maximal
\((2\eta r_j)\)-separated subset \(W_j\) of this remaining set.
For every \(w\in W_j\),
\[
B^V_{\eta r_j}(w)\subset E_j.
\]
Otherwise some point of this ball would lie in
\(D\setminus E_j\), and hence be within distance \(r_j\) of a point in
\(A_j\) and hence \(w\) would be within 
\((1+\eta)r_j\) of \(A_j\).
For all \(w\in W_j\), the balls \(B^V_{\eta r_j}(w)\) are pairwise disjoint,
and therefore
\begin{equation}\label{eq:4.8}
\card{W_j}\,\omega_k(\eta r_j)^k
\leq
\mathcal H^k(E_j).
\end{equation}

By maximality, \(B^V_{2\eta r_j}(W_j)\) covers
the remaining part of \(D_j^-\). Finally, the boundary layer
\(D\setminus D_j^-\) can be covered by \(O(1/r_j^{k-1})\) balls of
radius \((1+\eta)r_j\), which is negligible after multiplying by
\(r_j^k\). Since \(2\eta\leq1+\eta\), we obtain a cover
of \(D\) by balls of radius \((1+\eta)r_j\), using the centers in
\(A_j\cup W_j\), and \(o(r_j^{-k})\) additional
centers. 
Since $D$ is closed and convex, we may project all centers of
this cover onto $D$. This projection can only decrease the distance from a center to any point in $D$ and so the projection neither increases the covering radius nor
the number of centers, and the resulting cover has centers within the set.
Thus,
\[
\covnum_{(1+\eta)r_j}(D)
\leq
 \card{A_j}+\card{W_j}+o(r_j^{-k}).
\]
Multiplying by \(r_j^k\), using \eqref{eq:4.8} and applying the Euclidean
covering asymptotic
\[
\lim_{r\to 0 }r^k \covnum_r (D)
=
\coverconst\mathcal H^k(D)
\]
 from Theorem \ref{thm:covering}, which
proves \eqref{eq:4.7}.
We now apply this estimate to the projected centers. If
\[
u\in D_{\rho,\xi}\setminus E_{j,\rho,\xi},
\]
then the corresponding slice of \(T_j\) is nonzero. Hence the fiber
\[
x+u+V^\perp
\]
meets
\(\spt (T_j)\cap\Cyl_{\rho,\xi}(x)\). Choose an
intersection point \(z\). Since the \(h_j\)-balls centered at \(Y_j\)
cover \(\spt (T_j)\), there exists a \(y\in Y_j\) with
$
\dist(z,y)\leq h_j$.
Because \(\proj_{V,x}(z)=u\), this implies
\[
\lVert \proj_V(y-x)-u\rVert\leq h_j.
\]
For fixed \(\rho\), take \(j\) sufficiently large that
\(h_j\leq\xi\rho\). Since \(z\in\Cyl_{\rho,\xi}(x)\),
\[
\dist(y,x)
\leq \dist(y,z)+\dist(z,x)
\leq
\left(\xi+\sqrt{1+\xi^2}\right)\rho.
\]
Setting
$
R_{\xi}:=\sqrt{1+\xi^2}+\xi,
$
it follows that the projected \(h_j\)-balls whose centers correspond
to
$
Y_j\cap\overline{B}_{R_{\xi\rho}}(x)
$
cover \(D_{\rho,\xi}\setminus E_{j,\rho,\xi}\). Applying \eqref{eq:4.7}
and then \eqref{eq:4.6} gives
\begin{equation}\label{eq:4.9}
\liminf_{j\to\infty}
h_j^k
\card{Y_j\cap \overline{B}_{R_{\xi\rho}}(x)}
\geq
(1+\eta)^{-k}\coverconst\omega_k(1-\xi)^k\rho^k-C_k\eta^{-k}e_{\xi}(\rho)\rho^k.
\end{equation}
Choose \(\rho\to 0\) according to radii for which the preceding
restriction statements hold and
$
\nu\bigl(\partial B_{R_{\xi}\rho}(x)\bigr)=0.
$ Weak convergence then gives
\[
\nu\bigl(\overline{B}_{R_{\xi}\rho}(x)\bigr)
=
\lim_{j\to\infty}
h_j^k\card{Y_j\cap\overline{B}_{R_{\xi}\rho}(x)}.
\]
Therefore, \eqref{eq:4.9} implies
\begin{equation}\label{eq:4.10}
\nu\bigl(\overline{B}_{R_{\xi}\rho}(x)\bigr)
\geq
(1+\eta)^{-k}\coverconst\omega_k(1-\xi)^k\rho^k-C_k\eta^{-k}e_{\xi}(\rho)\rho^k.
\end{equation}
On the other hand, the density 1 property of \(T\) (a $\mathbb{Z}_2$ rectifiable current is  1 a.e. on the support) gives
\[
\lVert T\rVert\bigl(B_{R_{\xi}\rho}(x)\bigr)
=
\omega_kR_{\xi}^k\rho^k+o(\rho^k).
\]
Divide \eqref{eq:4.10} by this expression and let \(\rho\to 0\).
Since \(e_{\xi}(\rho)\to0\), we obtain
\[
\frac{d\nu}{d\lVert T\rVert}(x)
\geq
(1+\eta)^{-k}\coverconst
\left(\frac{1-\xi}{R_{\xi}}\right)^k.
\]
Finally, let \(\eta\to 0\) and then \(\xi\to 0\).
Because \(R_{\xi}\to1\),
\[
\frac{d\nu}{d\lVert T\rVert}(x)\geq\coverconst
\]
for \(\lVert T\rVert\)-almost every \(x\). Hence
\[
\nu\geq\coverconst\lVert T\rVert.
\]
Taking total masses now yields
\[
L
\geq \nu(\mathbb R^d)
\geq
\coverconst\lVert T\rVert(\mathbb R^d)
=
\coverconst\mathbf{M}(T),
\]
which proves the lemma.
\end{proof}

The limit obtained above  need not be
polyhedral or admit a pseudomanifold parameterization, so we extend
the sharp inequality in Theorem~\ref{thm:sharp-fillrad-pseudomanifold} to compactly supported integral cycles by polyhedral approximation.
\begin{cor}\label{cor:sharp-fillrad-current}
Every compactly supported integral \(k\)-cycle
$
T\in\mathbf I_k(\mathbb R^d;\mathbb Z_2)
$ satisfies
\begin{equation}\label{eq:sharp-fillrad-current}
\mathbf{M}(T)
\geq
s_k\,\FillRad (T)^k,
\end{equation}
where $s_k$ is the optimal Euclidean isoperimetric constant. 
\end{cor}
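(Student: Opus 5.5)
The plan is to reduce to Theorem~\ref{thm:sharp-fillrad-pseudomanifold} through polyhedral approximation in the flat topology. Semicontinuity of the filling radius, Lemma~\ref{lem:fillrad-semicontinuity}, then carries the inequality to the limit. We may assume $T\neq0$, since otherwise both sides vanish.

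\textbf{Step 1: approximation.} By the strong approximation theorem for integral (equivalently, flat) chains with $\mathbb Z_2$ coefficients (cf.\ the appendix of \cite{White2009}, or the Federer--Fleming deformation theorem applied on finer and finer meshes), there are polyhedral $k$-cycles $P_j$ such that:
\begin{itemize}
\item $P_j\to T$ in the global flat norm;
\item $\mathbf M(P_j)\to\mathbf M(T)$;
\item $\spt(P_j)$ lies in a fixed compact neighborhood of $\spt(T)$.
\end{itemize}
The boundary condition $\partial P_j=0$ holds because the deformation theorem commutes with $\partial$. If only $\mathbf M(P_j)\le C\,\mathbf M(T)$ is available from a crude deformation, I would use the sharper statement: polyhedral approximation with $\mathbf M(P_j)\le\mathbf M(T)+1/j$, obtained by first approximating by Lipschitz images and then by piecewise linear maps.

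\textbf{Step 2: apply the sharp inequality to the approximants.} After a small perturbation of vertices into general position, which changes mass and flat distance arbitrarily little, each $P_j$ is a finite simplicial $\mathbb Z_2$-cycle. Lemma~\ref{lem:pseudomanifold-parametrization} then gives a pseudomanifold parameterization $\varphi_j\colon M_j\to\R^d$ with $\Vol_k(\varphi_j)=\mathbf M(P_j)$. Theorem~\ref{thm:sharp-fillrad-pseudomanifold}, whose proof by the remark applies to parameterized pseudomanifolds, yields
\[
\FillRad(P_j)\le\left(\frac{\mathbf M(P_j)}{s_k}\right)^{1/k}.
\]
One must check that Gromov's notion of filling radius (fillings in $L^\infty$ or in a neighborhood of the image) dominates the current-based $\FillRad$ used here. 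The current definition only requires some $\mathbb Z_2$ chain $W$ with $\partial W=P_j$ supported in $B_\rho(\spt P_j)$. Gromov's filling is in particular such a chain after push-forward, and Euclidean space is convex, so the current-based filling radius is at most Gromov's. The bound above therefore holds for our $\FillRad$.

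\textbf{Step 3: pass to the limit.} By Lemma~\ref{lem:fillrad-semicontinuity} applied with $A_j=P_j$ and $A=T$,
\[
\FillRad(T)\ge\limsup_j\FillRad(P_j).
\]
This is the wrong direction for us: we need an upper bound on $\FillRad(T)$. So I would instead reverse the roles. From the proof of that lemma, if $Q_j$ is a mass-minimizing filling of $T-P_j$, then $\spt(Q_j)$ lies within $a_j\to0$ of $\spt(T)\cup\spt(P_j)$. Given any filling $V_j$ of $P_j$ within $\FillRad(P_j)+1/j$ of $\spt P_j$, the chain $V_j+Q_j$ fills $T$. It is supported within $\FillRad(P_j)+1/j+a_j+e_j'$ of $\spt T$, where $e_j'=\sup_{p\in\spt P_j}\dist(p,\spt T)$.

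\textbf{Main obstacle.} The difficulty is controlling $e_j'$, since flat convergence does not prevent the approximants from having far-away thin pieces. I would handle this by constructing $P_j$ via deformation onto meshes of size $\to0$, which keeps $\spt P_j$ within the mesh size of $\spt T$ and so gives $e_j'\to0$. Then
\[
\FillRad(T)\le\liminf_j\FillRad(P_j)\le\left(\frac{\mathbf M(T)}{s_k}\right)^{1/k}.
\]
If the mesh deformation only gives $\mathbf M(P_j)\le C\,\mathbf M(T)$, I would combine the two approximations: take the sharp-mass approximation and prune its far pieces using the monotonicity and small-mass argument of Lemma~\ref{lem:noncollapsing}.
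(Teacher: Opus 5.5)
Your architecture is the paper's. You approximate $T$ by polyhedral cycles, apply Theorem~\ref{thm:sharp-fillrad-pseudomanifold} via Lemma~\ref{lem:pseudomanifold-parametrization}, and transfer the bound back to $T$ by adding a thin filling of $T-P_j$. You are right that Lemma~\ref{lem:fillrad-semicontinuity} points the wrong way, and the bound you derive in Step~3 is correct.

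The gap is that you never construct approximants with all three properties Step~3 needs at once: $\partial P_j=0$, $\limsup_j\mathbf M(P_j)\le\mathbf M(T)$, and $\spt P_j\subset B_{o(1)}(\spt T)$. Each mechanism you propose delivers only two of them.
\begin{itemize}
\item Deformation onto fine meshes gives cycles with support control, but only $\mathbf M(P_j)\le C(d,k)\,\mathbf M(T)$. That yields $\FillRad(T)\le(C\,\mathbf M(T)/s_k)^{1/k}$ and loses exactly the sharp constant Proposition~\ref{prop:optimal} needs.
\item The sharp-mass (Lipschitz, then PL) approximation gives $\mathbf M(P_j)\le\mathbf M(T)+1/j$. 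For $k\ge2$, however, its boundary is only guaranteed to have small mass, not to vanish. Your justification that ``the deformation theorem commutes with $\partial$'' applies to the crude deformation, not to this approximation.
\item The pruning fallback is not yet an argument. Lemma~\ref{lem:noncollapsing} only converts filling radius into local mass; it removes nothing. Actually cutting off far pieces would need slicing and capping as in Lemma~\ref{lem:localization}, together with the observation that $\lVert P_j\rVert\rightharpoonup\lVert T\rVert$ so the far mass tends to zero. After that the pieces are in general no longer polyhedral, so you would need yet another approximation.
\end{itemize}

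The paper closes this with two standard moves.
\begin{itemize}
\item The strong polyhedral approximation is carried out inside the closed neighbourhood $K_m=\{\dist(\cdot,K)\le a_m\}$ of $K=\spt T$. Support control then comes with the approximation, and far pieces never arise. One gets polyhedral $A_m$ with $\mathbf M(A_m)\le\mathbf M(T)+a_m$, $\mathbf M(\partial A_m)\le a_m$, and $T=A_m+U_m+\partial F_m$ with $\mathbf M(U_m)+\mathbf M(F_m)<2a_m$, all supported in $K_m$.
\item The small boundary is killed by a polyhedral cap. Deforming the $(k-1)$-cycle $\partial A_m$ onto a mesh of size comparable to $\mathbf M(\partial A_m)^{1/(k-1)}$ forces the deformed cycle to vanish. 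This produces $\mathcal C_m$ with $\partial\mathcal C_m=\partial A_m$, $\mathbf M(\mathcal C_m)\le Ca_m^{k/(k-1)}$ and $\spt\mathcal C_m\subset B_{Ca_m^{1/(k-1)}}(K_m)$. For $k=1$ nothing is needed, since a nonzero $\mathbb Z_2$ $0$-chain has mass at least $1$.
\end{itemize}
Then $T_m=A_m+\mathcal C_m$ is a polyhedral cycle with $\mathbf M(T_m)\to\mathbf M(T)$ and $\spt T_m\subset B_{\delta_m}(K)$, where $\delta_m\to0$. So your $e_j'\to0$, and your Step~3 (mass-minimizing filling of $T-T_m$ plus monotonicity) finishes the proof.

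The paper instead keeps the homology produced by the construction: $T=T_m+\partial V_m$ with $V_m=F_m+W_m$ supported in $B_{\delta_m}(K)$, where $W_m$ fills $U_m+\mathcal C_m$. This gives $\FillRad(T_m)\ge\FillRad(T)-\delta_m$ directly, without minimizers. Once the approximants exist, either finish works.
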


\begin{proof}
 The assertion is immediate if \(T=0\), so assume
that \(T\neq0\).
We construct finite polyhedral cycles approximating $T$,  which are homologous to $T$ through chains
supported in shrinking neighborhoods of $K=\spt (T)$ and 
convergence to $\mathbf{M}(T)$. 

We use the following elementary consequence of the deformation
theorem. If \(Z\) is a compactly supported integral \(q\)-cycle,
where \(1\leq q<d\), then there is a filling \(W\) such that
\begin{align*}
\partial W=Z,
\qquad&
\mathbf{M}(W)
\leq C_{d,q}\mathbf{M}(Z)^{(q+1)/q},\\
\spt (W)
&\subset
B_{C_{d,q}\mathbf{M}(Z)^{1/q}}
\bigl(\spt (Z)\bigr).
\end{align*}
Moreover, \(W\) can be chosen polyhedral if \(Z\) is polyhedral.
Apply the deformation theorem on a cubical mesh of size
\(\ell=A_{d,q}\mathbf{M}(Z)^{1/q}\), with
\(A_{d,q}\) sufficiently large. The deformed cycle has volume
strictly less than \(\ell^q\), and hence is zero. This follows from the fact that a nonzero
cellular \(q\)-chain contains at least one  locally constant \(q\)-face.
The deformation theorem \cite[Theorem~1.1]{White1999Deformation} also provides a $(q+1)$-chain $W$
with the stated mass and support bounds and
$\partial W=Z-Z'$. Since $Z'=0$, this gives $\partial W=Z$. For \(Z=0\), we take \(W=0\).
    
Choose \(a_m\to 0\), with \(a_m<1\), and let
$
K_m:=\{x\in\mathbb R^d:
\dist (x,K)\leq a_m\}.
$
Apply polyhedral approximation in \(K_m\), using
\cite[Theorem~3.4]{MarcheseStuvard2018}.
Since \(\partial T=0\), we obtain finite polyhedral
\(k\)-chains \(A_m\) such that
\[
\mathbf{M}(A_m)
\leq\mathbf{M}(T)+a_m,
\qquad
\mathbf{M}(\partial A_m)\leq a_m.
\]
The estimate for the flat norm in \(K_m\) also gives integral
currents \(U_m\) and \(F_m\), of dimensions \(k\) and \(k+1\),
respectively, with
\begin{align*}
T=A_m+U_m+\partial F_m,
\qquad&
\mathbf{M}(U_m)
+\mathbf{M}(F_m)<2a_m,\\
\spt (A_m)\cup\spt (U_m)
&\cup\spt (F_m)\subset K_m.
\end{align*}

We now remove the boundary of \(A_m\).
If \(k\geq2\), apply the preceding filling construction to the
polyhedral \((k-1)\)-cycle \(\partial A_m\). This gives a
polyhedral \(k\)-chain \(\mathcal C_m\) with
\begin{align*}
\partial \mathcal C_m=\partial A_m,
\qquad&
\mathbf{M}(\mathcal C_m)
\leq C_{d,k}a_m^{k/(k-1)},\\
\spt (\mathcal C_m)
&\subset
B_{C_{d,k}a_m^{1/(k-1)}}(K_m).
\end{align*}
If \(k=1\), then \(\partial A_m=0\), since a 
$\mathbb{Z}_2$ zero-dimensional chain (which is not 0) has mass at least \(1\), whereas
\(\mathbf{M}(\partial A_m)\leq a_m<1\).
In this case, set \(\mathcal C_m=0\).
Define
\[
T_m:=A_m+\mathcal C_m,
\qquad
E_m:=U_m+\mathcal C_m.
\]
Both \(T_m\) and \(E_m\) are cycles, because
\(\partial U_m=\partial A_m=\partial \mathcal C_m\).
Moreover,
\[
T=T_m+E_m+\partial F_m,
\qquad
\mathbf{M}(E_m)\rightarrow0.
\]
Apply the same filling construction to \(E_m\), obtaining
\(W_m\) with \(\partial W_m=E_m\), mass tending to zero,
and support at distance tending to zero from
\(\spt (E_m)\). Setting \(V_m:=F_m+W_m\), we have
\[
T=T_m+\partial V_m,
\qquad
\mathbf{M}(V_m)\rightarrow0,
\]
and, for some \(\delta_m\to 0\),
\[
\spt (T_m)\cup\spt (V_m)
\subset
B_{\delta_m}(K).
\]
In particular, \(T_m\to T\) in the global flat norm. Since
\[
\mathbf{M}(T_m)
\leq\mathbf{M}(A_m)+\mathbf{M}(\mathcal C_m)
\leq\mathbf{M}(T)+o(1),
\]
lower semicontinuity gives
\[
\mathbf{M}(T_m)
\rightarrow\mathbf{M}(T).
\]

By arbitrarily small generic perturbations of the vertices, we may
assume that each $T_m$ is in affine general position. Choosing the perturbations
sufficiently small, we can adjust $V_m$
by the corresponding affine homotopy and we retain the following properties:
\begin{align*}
\mathbf{M}(T_m)\rightarrow\mathbf{M}(T),
\qquad T-T_m=\partial V_m,\\
\spt (T_m)\cup\spt (V_m)
\subset B_{\delta_m}(K),
\qquad \delta_m\to 0.
\end{align*}
We claim that
\begin{equation}\label{eq:polyhedral-fillrad-comparison}
\FillRad (T_m)
\geq
\FillRad (T)-\delta_m.
\end{equation}
Indeed, if \(r>\FillRad (T_m)\), choose a filling
\(W_m\) of \(T_m\) supported in
\(B_r(\spt (T_m))\).
Then
\[
\partial(W_m+V_m)=T,
\qquad
\spt (W_m+V_m)
\subset B _{r+\delta_m}(K).
\]
Consequently, \(\FillRad (T)\leq r+\delta_m\).
Letting \(r\to \FillRad (T_m)\) proves
\eqref{eq:polyhedral-fillrad-comparison}.

Suppose now, for a contradiction, that
$
\mathbf{M}(T)
<s_k\FillRad (T)^k.$ 
By convergence of mass and
\eqref{eq:polyhedral-fillrad-comparison}, for all sufficiently
large \(m\) we have
\[
\mathbf{M}(T_m)
<s_k\bigl(\FillRad (T)-\delta_m\bigr)^k
\leq s_k\FillRad (T_m)^k.
\]
On the other hand,
Lemma~\ref{lem:pseudomanifold-parametrization} provides a finite
pseudomanifold parameterization of \(T_m\) without increasing
its \(k\)-volume. Therefore
Theorem~\ref{thm:sharp-fillrad-pseudomanifold} gives
\[
\mathbf{M}(T_m)
\geq s_k\FillRad (T_m)^k,
\]
a contradiction. This proves \eqref{eq:sharp-fillrad-current}.
\end{proof}

We now complete the proof of Proposition~\ref{prop:optimal}.

\begin{proof}[Proof of Proposition~\ref{prop:optimal}]
As noted earlier, it is sufficient to prove
\begin{equation}\label{eq:min-points-asymptotic}
\liminf_{\pi\to\infty}
\pi^{-k}N_\pi^{(k)}
\geq
\coverconst s_k.
\end{equation}
Let \(\pi_j\to\infty\) be arbitrary, and pass to a subsequence along
which
\[
\pi_j^{-k}N_{\pi_j}^{(k)}
\rightarrow
L
:=
\liminf_{j\to\infty}
\pi_j^{-k}N_{\pi_j}^{(k)}.
\]
There is nothing to prove if \(L=\infty\), so suppose that
\(L<\infty\).
Choose a finite point set \(X_j\) with
$
\card{X_j}=N_{\pi_j}^{(k)},
$
supporting a persistent \(k\)-class with (multiplicative) persistence at least \(\pi_j\).
Normalize the death time to \(1\) by scaling, and denote its normalized birth time by
\(b_j\). Then $b_j\leq 1/\pi_j$.
After passing to a further subsequence, we may assume that
$
b_j^k\card{X_j}\rightarrow L'
$
for some \(L'\leq L\).

Fix \(\eta>0\), and apply
Lemma~\ref{lem:separated-net-regularization}. Setting
$
h_j:=(1+\eta)b_j,
$
we obtain finite sets \(Y_j\subset X_j\) and compactly supported
polyhedral \(k\)-cycles \(S_j\) such that
\begin{equation}\label{eq:regularized-cycle-properties}
\spt (S_j)
\subset
B_{h_j}(Y_j),
\qquad
\card{Y_j}\leq\card{X_j},
\qquad
\FillRad (S_j)\geq1-o(1),
\end{equation}
and
\begin{equation}\label{eq:regularized-cycle-volume}
\mathbf{M}(S_j)
\leq
C_{\mathrm{net}}(d,k,\eta)h_j^k\card{Y_j}.
\end{equation}
Since
\[
h_j^k \card{Y_j}
\leq
(1+\eta)^k b_j^k\card{X_j},
\]
\eqref{eq:regularized-cycle-volume} gives a uniform bound
$
\mathbf{M}(S_j)\leq M_0.
$
Now fix \(\varepsilon\in(0,1)\) and \(\zeta>0\). Applying
Lemma~\ref{lem:persistent-profile}, we obtain integral \(k\)-cycles
\(T_j\), points \(x_j\), and a radius \(R_{\mathrm{loc}}<\infty\), independent of
\(j\), such that
\begin{equation}\label{eq:selected-profile-properties}
\spt (T_j)\subset B_{R_{\mathrm{loc}}}(x_j),
\qquad
\mathbf{M}(T_j)\leq M_0+\zeta,
\qquad
\FillRad (T_j)\geq1-\varepsilon.
\end{equation}
Let \(q_*\) be the uniform bound on the number of localization cuts
in the proof of Lemma~\ref{lem:persistent-profile}. Each localization
cap \(\mathcal C_j^\ell\) satisfies
\[
\mathbf{M}(\mathcal C_j^\ell)\leq\frac{m_{\mathrm{cap}}}{2},
\]
and the parameters were chosen so that \(q_*m_{\mathrm{cap}}<\zeta\). Hence
\begin{equation}\label{eq:total-cap-volume}
\sum_{\ell=1}^{q_j}
\mathbf{M}(\mathcal C_j^\ell)
\leq
\frac{q_*m_{\mathrm{cap}}}{2}
<
\frac{\zeta}{2}.
\end{equation}
Fix \(\vartheta\in(0,1)\) and set
$
\Lambda_\vartheta:=1+q_*\vartheta$.
Lemma~\ref{claim:cover-localization-caps}, applied using
\eqref{eq:total-cap-volume}, produces a finite set
\(W_j\subset\mathbb R^d\) such that
\begin{align}
\spt (T_j)
&\subset
B_{\Lambda_\vartheta h_j}(Y_j\cup W_j)\label{eq:profile-cover-with-caps}\\
h_j^k\card{W_j}
&\leq
C_{d,k}\vartheta^{-k}\zeta.\label{eq:cap-covering-cost}
\end{align}
We translate the profiles and centers by \(-x_j\) and denote the translations by 
$
\widehat T_j,$
$\widehat Y_j,$ and 
$\widehat{W}_j
$ respectively. 
By \eqref{eq:selected-profile-properties},
\[
\spt (\widehat T_j)\subset\overline{B}_{R_{\mathrm{loc}}}(0)  ,
\qquad
\mathbf{M}(\widehat T_j)\leq M_0+\zeta.
\]
The Federer--Fleming compactness theorem therefore gives, after
passing to a further subsequence,
\[
\widehat T_j
\rightarrow
T_{\varepsilon,\zeta}
\]
in the global flat norm, for some compactly supported integral
\(k\)-cycle \(T_{\varepsilon,\zeta}\). By
Lemma~\ref{lem:fillrad-semicontinuity},
\begin{equation}\label{eq:limit-profile-fillrad}
\FillRad (T_{\varepsilon,\zeta})
\geq
\limsup_{j\to\infty}\FillRad (\widehat T_j)
\geq
1-\varepsilon.
\end{equation}
Applying Corollary~\ref{cor:sharp-fillrad-current} and
\eqref{eq:limit-profile-fillrad}, we obtain
\begin{equation}\label{eq:limit-profile-volume}
\mathbf{M}(T_{\varepsilon,\zeta})
\geq
s_k\FillRad (T_{\varepsilon,\zeta})^k
\geq
s_k(1-\varepsilon)^k.
\end{equation}
Define
$
A_j:=
\widehat Y_j\cup\widehat{W}_j.
$
By \eqref{eq:profile-cover-with-caps}, the balls  with centers in \(A_j\) and radius
$
\widetilde h_j:=\Lambda_\vartheta h_j
$
 cover \(\spt (\widehat T_j)\).
Lemma~\ref{lem:varying-cover} and
\eqref{eq:limit-profile-volume} therefore give
\[
\liminf_{j\to\infty}
\widetilde h_j^k\card{A_j}
\geq
\coverconst\mathbf{M}(T_{\varepsilon,\zeta})
\geq
\coverconst s_k(1-\varepsilon)^k.
\]
Since \(\widetilde h_j=\Lambda_\vartheta h_j\),
\begin{equation}\label{eq:profile-center-lower-bound}
\liminf_{j\to\infty}
h_j^k\card{A_j}
\geq
\Lambda_\vartheta^{-k}\coverconst s_k(1-\varepsilon)^k.
\end{equation}
Because $
\card{\widehat{Y}_j}
\geq    
\card{A_j}-\card{\widehat{W}_j},
$
\eqref{eq:cap-covering-cost} and
\eqref{eq:profile-center-lower-bound} imply that 
\begin{equation}\label{eq:original-center-lower-bound}
\liminf_{j\to\infty}h_j^k\card{ Y_j}
\geq
\Lambda_\vartheta^{-k}\coverconst s_k(1-\varepsilon)^k
-
C_{d,k}\vartheta^{-k}\zeta.
\end{equation}
Using \(Y_j\subset X_j\), \(h_j=(1+\eta)b_j\), and the definition of
\(L'\), we conclude from Equation\eqref{eq:original-center-lower-bound} that
\[
(1+\eta)^kL'
\geq
\Lambda_\vartheta^{-k}\coverconst s_k(1-\varepsilon)^k
-
C_{d,k}\vartheta^{-k}\zeta.
\]
First let \(\zeta\to 0\), and then let \(\vartheta\to 0\).
Since \(\Lambda_\vartheta=1+q_*\vartheta\), this yields
\[
L'
\geq
(1+\eta)^{-k}\coverconst s_k(1-\varepsilon)^k.
\]
Finally, let \(\varepsilon\to 0\) and then
\(\eta\to 0\). We obtain
$
L'\geq\coverconst s_k.$
Since \(L'\leq L\), this proves
\eqref{eq:min-points-asymptotic}.
Applying Theorem \ref{thm:covering} to \(\mathbb{S}^k\) gives
\[
\lim_{\pi\to\infty}
\pi^{-k}N_{1/\pi}^{\mathrm{cov}}(\mathbb{S}^k)
=
\coverconst s_k.
\]
Consequently,
\[
\liminf_{\pi\to\infty}
\frac{N_\pi^{(k)}}
     {N_{1/\pi}^{\mathrm{cov}}(\mathbb{S}^k)}
\geq1.
\]
Therefore, for every \(\delta>0\) and all sufficiently large \(\pi\),
\[
N_\pi^{(k)}
\geq
(1-\delta)N_{1/\pi}^{\mathrm{cov}}(\mathbb S^k).
\]
Proposition~\ref{prop:optimal} follows immediately from the above and the covering asymptotic in
 Theorem \ref{thm:covering}.
\end{proof}

\section*{Acknowledgments}
The authors are grateful to Huy Nguyen for very  helpful discussions, and also to Parker Duncan for his part in the preliminary steps of this research. The authors were partially supported by the Leverhulme Trust grant RPG-2023-144, and by the EPSRC grant EP/Y028872/1. 
OB was partially supported by the EPSRC grant EP/Y008642/1.

\bibliographystyle{plain}
\bibliography{zotero}
 \end{document}